\newcommand\numberthis{\addtocounter{equation}{1}\tag{\theequation}}
\newcommand{\beq}{\begin{equation}}
\newcommand{\enq}{\end{equation}}
\def\theequation{\@arabic\c@equation}
\newcommand{\bbZ}{{\mathbb{Z}}}
\newcommand{\cQ}{{\mathcal Q}}
\newcommand{\bq}{{\mathbf q}}
\newcommand{\bp}{{\mathbf p}}
\newcommand{\bi}{\bibitem}
\numberwithin{equation}{section}
\renewcommand{\div}{\operatorname{div}}
\renewcommand{\det}{\operatorname{det}}
\newcommand{\curl}{\operatorname{curl}}
\newcommand{\diag}{\operatorname{diag}}
\theoremstyle{plain}
\newtheorem{theorem}{Theorem}[section]
\newtheorem{lemma}[theorem]{Lemma}
\theoremstyle{definition}
\newtheorem{property}[theorem]{Property}
\newtheorem{remark}[theorem]{Remark}
\newtheorem{notation}[theorem]{Notation}
\begin{document}
\allowdisplaybreaks

\title[Instability of unidirectional flows for 2D Navier-Stokes]{Instability of unidirectional flows for the 2D Navier-Stokes equations and related $\alpha$-models}
\thanks{The author is indebted to Professor Yuri Latushkin (University of Missouri-Columbia / Courant Institute of Mathematical Sciences) for several helpful discussions during the course of the preparation of this manuscript. The author also thanks Aleksei Seletskiy for making available the code from his forthcoming website and Yadugiri V. Tiruvaimozhi for help with the numerical illustrations.}

\author[S. Vasudevan]{Shibi Vasudevan}
\address{International Centre for Theoretical Sciences,
Tata Institute of Fundamental Research, Bengaluru, 560089, India}
\email{shibi.vasudevan@icts.res.in}

\date{\today}

\keywords{2D Navier-Stokes equations, instability, continued fractions, unidirectional flows, Fredholm determinants, Navier-Stokes-$\alpha$, second grade fluid model, Navier-Stokes-Voigt model}


\begin{abstract}
We study instability of unidirectional flows for the linearized 2D Navier-Stokes equations on the torus. Unidirectional flows are steady states whose vorticity is given by Fourier modes corresponding to a single vector $\mathbf p \in \mathbb Z^{2}$. Using Fourier series and a geometric decomposition allows us to decompose the linearized operator $L_{B}$ acting on the space $\ell^{2}(\mathbb Z^{2})$ about this steady state as a direct sum of linear operators $L_{B,\mathbf q}$ acting on $\ell^{2}(\mathbb Z)$ parametrized by some vectors $\mathbf q\in\mathbb Z^2$. Using the method of continued fractions we prove that the linearized operator $L_{B,\mathbf q}$ about this steady state has an eigenvalue with positive real part thereby implying exponential instability of the linearized equations about this steady state. We further obtain a characterization of unstable eigenvalues of $L_{B,\mathbf q}$ in terms of the zeros of a perturbation determinant (Fredholm determinant) associated with a trace class operator $K_{\lambda}$. We also extend our main instability result to cover regularized variants (involving a parameter $\alpha>0$) of the Navier-Stokes equations, namely the second grade fluid model, the Navier-Stokes-$\alpha$ and the Navier-Stokes-Voigt models.  
\end{abstract}

\maketitle
\normalsize

\section{Introduction}
Consider the incompressible Navier Stokes equations in vorticity form on the torus $\mathbb T^{2}$
\begin{equation}\label{ns}
\partial_{t} \omega + \mathbf u \cdot \nabla \omega = \nu \Delta \omega + \nu f,
\end{equation}
where the velocity $\mathbf u$ and the vorticity $\omega$ are related via the equation $\omega=\curl \mathbf u$ and incompressibility imposes the condition $\nabla \cdot \mathbf u =0$, $\nu > 0$ is a constant and $f$ is a forcing function. Putting $\nu=0$ formally gives us the Euler equations. The condition $\nabla \cdot \mathbf u =0$ guarantees the existence of a stream function $\psi$ which is related to the velocity via $\mathbf u = \nabla^{\perp}\psi= (\psi_{y}, -\psi_{x})$ and $\omega= \curl \mathbf u = -\Delta \psi$.
Fix a non-zero vector $\mathbf p \in \mathbb Z^{2} \backslash \{0\}$ in the integer lattice $\mathbb Z^{2}$ and consider a steady state solution to the 2D Navier Stokes equations \eqref{ns} on the torus $\mathbb T^{2}$ of the form
\begin{align}\label{uni}
\omega^{0}(\mathbf x) &= \frac{\Gamma}{2}e^{i \mathbf p \cdot \mathbf x} + \frac{\Gamma}{2}e^{-i \mathbf p \cdot \mathbf x} = \Gamma \cos (\mathbf p \cdot \mathbf x),\nonumber \\
\mathbf u^{0}(\mathbf x)&= (\psi^{0}_{y},-\psi^{0}_{x}) =\frac{\Gamma}{\|\mathbf p\|^{2}}\sin (\mathbf p \cdot \mathbf x) \mathbf p^{\perp}, \nonumber \\
f&= -\Delta \omega^{0}= \|\mathbf p\|^{2}\Gamma \cos (\mathbf p \cdot \mathbf x).
\end{align}
where $\mathbf p = (p_{1},p_{2})$, $\mathbf p^{\perp}=(-p_{2},p_{1})$, $\Gamma \in \mathbb R$ and $\psi^{0}(\mathbf x)= \frac{\Gamma}{\|\mathbf p\|^{2}}\cos (\mathbf p \cdot \mathbf x)$.  
Such a steady state with one non-zero Fourier mode (characterized by the vector $\mathbf p$) for the vorticity is called a \textit{unidirectional flow}. Two important special cases of the above are the following. Letting $\mathbf p =(0,1)$ we obtain a steady state whose vorticity is given by $\omega^{0}(x,y)=\cos y$. This steady state is sometimes referred to as the Kolmogorov flow in the literature. L.\ Meshalkin and Y.\ Sinai, in \cite{MS} proved linear instability of these states on the domain $\mathbb R \times \mathbb T$ (a periodic channel, i.e., infinitely long in $x$-direction and periodic boundary conditions in $y$) for the Navier Stokes equations using continued fractions, see also \cite{Y65}. The case when $\mathbf p=(0,m)$ was considered by S.\ Friedlander, W.\ Strauss and M.\ Vishik \cite{FSV97}, who proved linear instability of these states to the Euler equations on $\mathbb T^{2}$ extending the continued fractions methods of Meshalkin and Sinai, see also \cite{BFY99} by L.\ Belenkaya, S.\ Friedlander and V.\ Yudovich and \cite{FH98} by S.\ Friedlander and L.\ Howard. When $\mathbf p =(1,0)$ these states are called bar states in \cite{BW}. In a recent preprint, see \cite{CEW20}, the authors exhibit an interesting family of non-trivial steady states that are arbitrarily close to the Kolmogorov flow for the Euler equations, with consequences for the Navier-Stokes equations. 

Studying the spectrum of the linearized differential operator obtained by linearizing the Navier-Stokes or Euler equations about steady states such as those considered above is an important first step towards an understanding of the stability and dynamics of solutions to the full nonlinear equations. Besides the earlier cited works, see also \cite{Fr91, FZ98, L92, L92a} for related work on the use of continued fractions to study instability of steady states to the Navier-Stokes equations.

Unidirectional flows can be classified as being of type $0,I,II$ (depending on the vector $\mathbf p$) as explained in Remark \ref{classf} below. Flows of type $0$ are known to be stable for the Euler equations. Recently, H.\ Dullin, Y.\ Latushkin, R.\ Marangell, J.\ Worthington and the present author proved in \cite{DLMVW20} that unidirectional flows that are of type $I$ are linearly exponentially unstable for the 2D Euler and the 2D $\alpha$-Euler equations. A result of this type was unavailable for the Navier-Stokes equations and this served as a first inspiration for the current paper. We extend the instability results in \cite{DLMVW20} to cover the 2D Navier-Stokes equations, see Theorem \ref{mainthm} and Theorem \ref{mainthmipim} in Section 2 below. We mention \cite{L,LLS} as important precursors to \cite{DLMVW20}. See also \cite{WDM1,WDM2} for related work on stability results for the Euler equations.

A second aim of this paper is to obtain a characterization of unstable eigenvalues of the linearized Navier-Stokes vorticity operator in terms of roots of a perturbation determinant (Fredholm determinant) $\mathcal D(\lambda)$ associated with a trace class operator $K_{\lambda}$, see equation \eqref{mathcald} and Theorem \ref{fredchar} in Section 3 below. Such characterizations are widely used in the spectral theory of Schr\"{o}dinger operators, see for example \cite{GM04, GLM07} and the bibliography therein, but seem much less explored in the fluid dynamics literature. We attempt to fill this gap. 

This paper also stems from an extensive literature of studying various regularizations of the Navier-Stokes and Euler equations: the so called $\alpha$-models. The $\alpha$-Euler and Navier-Stokes-$\alpha$ models are, respectively, a regularization of the Euler and Navier-Stokes equations involving a regularization parameter $\alpha>0$. These (together with various related $\alpha$-) models were introduced and studied by C.\ Foias, D.\ Holm, J.\ Marsden, T.\ Ratiu and E.\ Titi in \cite{FHT01, HMR98, HMR98a} and find widespread use in various applications such as turbulence modeling, data assimilation etc., see for example \cite{ANT16, CFHOTW98}. In view of their widespread use in applications, the study of the stability properties of these $\alpha$-models assumes importance. For instance, the Navier-Stokes-$\alpha$ in vorticity form is represented by the following equations (on the two-torus $\mathbb T^{2}$)
\begin{equation*}
\partial_{t}\omega+\mathbf u_{f}\cdot \nabla \omega=\nu \Delta \omega+\nu f,
\end{equation*}
where $\alpha>0$, the filtered velocity $\mathbf u_{f}$ is related to the actual velocity $\mathbf u$ via the formula $\mathbf u=(I-\alpha^{2}\Delta)\mathbf u_{f}$ and the vorticity $\omega=\curl \mathbf u =\curl (I-\alpha^{2}\Delta)\mathbf u_{f}$. 
  
A third aim of the present work is to extend the instability results for the following $\alpha$-regularized models of the 2D Navier-Stokes equations: 
the second grade fluid model, the Navier-Stokes-$\alpha$ model and the Navier-Stokes-Voigt model, see Theorem \ref{mainthm-sg-nsa-nsv} in Section 4. See the introduction to Section 4 for equations of motion and a basic description of (the viscous version) of these regularized models (setting $\nu=0$ therein allows us to recover the inviscid version of these $\alpha$-models).  

Our manuscript is organized as follows. The rest of Section 1 is devoted to setting up function spaces and operators. It is convenient to consider instead of equation \eqref{ns} above, an equivalent equation involving Fourier coefficients $\omega_{\mathbf k}$ of $\omega$, see equation \eqref{nsfourier} below. A geometric decomposition allows us to decompose our linearized operator $L_{B}$ (see \eqref{Lb} below) acting on the space $\ell^{2}(\mathbb Z^{2})$ as a direct sum of linear operators $L_{B,\mathbf q}$ acting on $\ell^{2}(\mathbb Z)$ parametrized by some vectors $\mathbf q\in\mathbb Z^2$. In Section 2, we use continued fractions techniques to prove linear instability of unidirectional flows that are of type $I$, see Theorem \ref{mainthm} and Theorem \ref{mainthmipim}. Our main theorem \ref{mainthm} characterizes the existence of an unstable eigenvalue to the linearized Navier-Stokes operator in terms of roots of equations involving continued fractions (see equation \eqref{fg} and Remark \ref{root}). In Section 3, we characterize the unstable eigenvalues of the operator $L_{B,\mathbf q}$ in terms of the roots of a perturbation determinant (Fredholm determinant) $\mathcal D(\lambda)$ associated with a trace class operator $K_{\lambda}$, see equation \eqref{mathcald} and Theorem \ref{fredchar}. In case $\mathbf q$ is of type $I$, Theorem \ref{fredchar} allows us to establish a one to one correspondence between the roots of the continued fractions equation \eqref{fg} and the roots of the perturbation determinant $\mathcal D(\lambda)$, see Theorem \ref{1-1}. 
In Section 4, we extend our main instability theorems of Section 2 to cover the second grade fluid model, the Navier-Stokes-$\alpha$ and the Navier-Stokes-Voigt models. Our main result in this Section is Theorem \ref{mainthm-sg-nsa-nsv}.

\subsection*{Problem setup and governing equations}
One can use Fourier series decomposition  $\omega(\mathbf x)=\sum_{\mathbf k\in\mathbb Z^2\setminus\{0\}}\omega_\mathbf k e^{i \mathbf k\cdot\mathbf x}$  
and rewrite \eqref{ns} as
\begin{equation}\label{nsfourier}
\frac{d\omega_{\mathbf k}}{dt}=\sum_{\mathbf q\in\mathbb Z^2\setminus\{0\}}\beta(\mathbf k-\mathbf q,\mathbf q)\omega_{\mathbf k-\mathbf q}\omega_\mathbf q - \nu \|\mathbf k\|^{2}\omega_{\mathbf k} + \nu f_{\mathbf k} ,\,\,\mathbf k\in\mathbb Z^2\setminus\{0\},
\end{equation}
where the coefficients $\beta(\mathbf p,\mathbf q)$ for $\mathbf p,\mathbf q\in\mathbb Z^2$ are defined as 
\begin{equation}\label{dfn}
\beta(\mathbf p,\mathbf q)=\frac{1}{2}\bigg(\|\mathbf q\|^{-2}-\|\mathbf p\|^{-2}\bigg)(\mathbf p\wedge\mathbf q)\,
\end{equation}
for %
$\mathbf p\neq0,\mathbf q\neq0$, and $\beta(\mathbf p,\mathbf q)=0$ otherwise.\footnote{A derivation of the equation analogous to \eqref{nsfourier} for the Euler equation ($\nu=0$) is given in Lemma 6.1 in the Appendix of \cite{DLMVW20}. The arguments therein can be adapted to derive \eqref{nsfourier} from \eqref{ns}.} Here 
\begin{equation}\label{dfnwedge1}
\mathbf p\wedge\mathbf q=\det\left[\begin{smallmatrix}p_1&q_1\\ p_2&q_2\end{smallmatrix}\right] \mbox{ for } \mathbf p=(p_1,p_2) \mbox{ and } \mathbf q=(q_1,q_2).
\end{equation}
The steady state \eqref{uni} satisfies the relation $\omega^{0}= \|\mathbf p\|^{2} \psi^{0}$.
Consider the linearization of \eqref{ns} about this steady state given by
\begin{equation}\label{nslin}
\partial_{t}\omega + \mathbf u^{0} \cdot \nabla \omega + \mathbf u \cdot \nabla \omega^{0}= \nu \Delta \omega.
\end{equation} 
Corresponding to \eqref{nslin}, consider the linear operator $L_{B}$ given by
\begin{equation}\label{Lb}
L_{B} \omega = - \mathbf u^{0} \cdot \nabla \omega - \mathbf u \cdot \nabla \omega^{0}+ \nu \Delta \omega
\end{equation}
Using the Fourier decomposition in \eqref{nsfourier}, consider the following linearized vorticity operator in the space $\ell^2(\mathbb Z^2)$
\begin{align}
L_B&: (\omega_\mathbf k)_{\mathbf k\in\mathbb Z^2}\mapsto
\big(\beta(\mathbf p,\mathbf k-\mathbf p)\Gamma\omega_{\mathbf k-\mathbf p}-
\beta(\mathbf p,\mathbf k+\mathbf p)\Gamma \omega_{\mathbf k+\mathbf p}- \nu \|\mathbf k\|^{2}\omega_{\mathbf k}\big)_{\mathbf k\in\mathbb Z^2}.\label{dfnLB}
\end{align}

\subsubsection*{Decomposition of subspaces and operators}\label{B-S} Similar to the Euler case considered in \cite[pp. 2054-2057]{DLMVW20}, we decompose the operator $L_B$ acting in $\ell^2(\mathbb Z^2)$ into the direct sum of operators $L_{B,\mathbf q}$, $\mathbf q\in\mathcal Q\subset\mathbb Z^2$, acting in the space $\ell^2(\mathbb Z)$, for some set $\mathcal Q\subset\mathbb Z^2$. 
Fix $\mathbf p\in\mathbb Z^2 \backslash \{0\}$ and for any $\mathbf q\in\mathbb Z^2$ we denote $\Sigma_{B,\mathbf q}=\{\mathbf q+n\mathbf p: n\in\mathbb Z\}$. Let $\widehat{\mathbf q}$ be the point in $\Sigma_{B,\mathbf q}$ with the smallest norm (there may sometimes be two such points in which case we let $\widehat{\mathbf q}$ be the point $\mathbf q+n\mathbf p \in \Sigma_{B,\mathbf q}$ with the larger $n$). For example, see Figure \ref{Classes of points} below, if $\mathbf q=(2,3)$ on the green line, $\Sigma_{B,\mathbf q}$ is all the dotted points on the green line and $\widehat{\mathbf q}$ will be the point $(-1,2)$. Let $\mathcal Q=\{\widehat{\mathbf q}(\mathbf q): \mathbf q\in\mathbb Z^2\}$. For each $\mathbf q\in\mathcal Q$ we denote by $X_{B,\mathbf q}$ the subspace of $\ell^2(\mathbb Z^2)$ of sequences supported in $\Sigma_{B,\mathbf q}$, i.e., $X_{B,\mathbf q}=\{(\omega_\mathbf k)_{\mathbf k\in\mathbb Z^2}: \omega_\mathbf k=0 \text{ for all $\mathbf k\notin\Sigma_{B,\mathbf q}\}$}$. Clearly, $\ell^2(\mathbb Z^2)=\oplus_{\mathbf q\in\mathcal Q}X_{B,\mathbf q}$, the operator $L_B$ leaves $X_{B,\mathbf q}$ invariant,  and therefore $L_B=\oplus_{\mathbf q\in\mathcal Q}L_{B,\mathbf q}$ where $L_{B,\mathbf q}$ is the restriction of $L_B$ onto $X_{B,\mathbf q}$. Each $X_{B,\mathbf q}$ is isometrically isomorphic to $\ell^2(\mathbb Z)$ via the map $(\omega_{\mathbf q+n\mathbf p}) \mapsto (w_n)_{n\in\mathbb Z}$.
Under this isomorphism the operator $L_{B,\mathbf q}$ in $X_{B,\mathbf q}$ induces an operator in $\ell^2(\mathbb Z)$ (that we will still denote by $L_{B,\mathbf q}$) given by the formula
\begin{align}
L_{B,\mathbf q}: (w_n)_{n\in\mathbb Z}\mapsto
\bigg( \beta(\mathbf p,\mathbf q+(n-1)\mathbf p)\Gamma w_{n-1}-&
\beta(\mathbf p,\mathbf q+(n+1)\mathbf p)\Gamma w_{n+1} \nonumber \\&- \nu \|\mathbf q + n \mathbf p \|^{2} w_{n} \bigg)_{n\in\mathbb Z}.\label{LBq}
\end{align}
If $\mathbf q$ is parallel to $\mathbf p$, then $L_{B,\mathbf q}$ above is the zero operator (since $\mathbf p \wedge \mathbf q = 0$ and $\mathbf p \wedge (\mathbf q + n \mathbf p)=\mathbf p \wedge \mathbf q$). Thus, in what follows we assume $\mathbf q $ is not parallel to $\mathbf p$.
Introduce the notation 
\begin{align}\label{dfnrho}
\rho_n=\Gamma\beta(\mathbf p,\mathbf q+n\mathbf p)=\frac{1}{2}\Gamma(\mathbf q\wedge\mathbf p)  \bigg(\frac{1}{\|\mathbf p\|^{2}} -\frac{1}{\|\mathbf q+n\mathbf p\|^{2}}\bigg), \, n\in\mathbb Z,
\end{align}
Assuming that $\Gamma$ satisfies the normalization condition $\frac{\Gamma(\mathbf q\wedge\mathbf p)}{2\|\mathbf p\|^{2}}=1,$ will imply that
$\rho_{n}$ satisfies
\begin{equation}\label{rhon}
\rho_{n}=1-\frac{\|\mathbf p\|^{2}}{\|\mathbf q+n\mathbf p\|^{2}}.
\end{equation}
Using the notation for $\rho_{n}$, the operator $L_{B,\mathbf q}$ in \eqref{LBq} can be rewritten as 
\begin{equation}\label{Lbqnew}
L_{B,\mathbf q}: (w_n)_{n\in\mathbb Z}\mapsto (\rho_{n-1}w_{n-1}-\rho_{n+1}w_{n+1}- \nu \|\mathbf q + n \mathbf p \|^{2} w_{n})_{n\in\mathbb Z}.
\end{equation}

\begin{figure}\label{Classes of points}
  \begin{center}
    \begin{tikzpicture}[scale=.8]
    \draw[help lines, thick] (-4,-4) grid (5,4);
    \draw [->] [thick] (-4.3,0) -- (5.3,0);
     \draw [->] [thick] (0,-4.3) -- (0,4.3);
    \node at (-.2,-.2) {$0$};
     \draw [->] [ultra thick] (0,0) -- (3,1);
     \node at (3.2,1.2) {$\bf p$};
     \draw [ultra thick, green] (-4,1) -- (5,4);
      \node at (-.4, 2.3) {${\bf q}_1$ (type $I_0$)};
      \node at (2.2, 3.2) {${\bf q}_1+{\bf p}$};
       \node at (-4, 1.3) {${\bf q}_1-{\bf p}$};
      \draw [fill] (-1,2) circle [radius=.1];
     \draw [fill] (2,3) circle [radius=.1];
      \draw [fill] (-4,1) circle [radius=.1];
       \draw [fill] (5,4) circle [radius=.1];
      \draw [ultra thick, blue] (-4,0) -- (5,3);
       \draw [fill] (-1,1) circle [radius=.1];
     \draw [fill] (2,2) circle [radius=.1];
      \draw [fill] (-4,0) circle [radius=.1];
          \draw [fill] (5,3) circle [radius=.1];
       \node at (-.5, .6) {${\bf q}_2$ (type $II$)};
        \node at (2, 1.5) {${\bf q}_2+{\bf p}$};
         \node at (5.2, 2.5) {${\bf q}_2+2{\bf p}$};
       \draw [ultra thick, red] (-4,-3.3333) -- (5,-.3333);
        \draw [fill] (0,-2) circle [radius=.1];
        \draw [fill] (3,-1) circle [radius=.1];
         \node at (0, -1.5) {${\bf q}_3$ (type $I_+$)};
     \draw [ultra thick, brown] (-4,-4) -- (5,-1);
      \node at (3.3, -2.5) {${\bf q}_4$ (type $I_-$)};
      \node at (-1, -3.5) {${\bf q}_4-{\bf p}$};
       \draw [fill] (-1,-3) circle [radius=.1];
        \draw [fill] (2,-2) circle [radius=.1];
         \draw [fill] (5,-1) circle [radius=.1];
          \draw [fill] (-4,-4) circle [radius=.1];
        \draw [fill] (-3,-3) circle [radius=.1];
     \draw[ultra thick] (0,0) circle [radius=3.1622];
\end{tikzpicture}
\caption{${\bf p}=(3,1)$; point ${\bf q}_1=(-1,2)$ is a point of type $I_0$ (green $\Sigma_{{\bf q}_1}$), point ${\bf q}_2=(-1,1)$ is a point of type $II$ (blue $\Sigma_{{\bf q}_2}$), point ${\bf q}_3=(0,-2)$ is a point of type $I_+$ (red $\Sigma_{{\bf q}_3}$), and point ${\bf q}_4=(2,-2)$ is a point of type $I_-$ (brown $\Sigma_{{\bf q}_4}$ ).}
\end{center}
\end{figure}
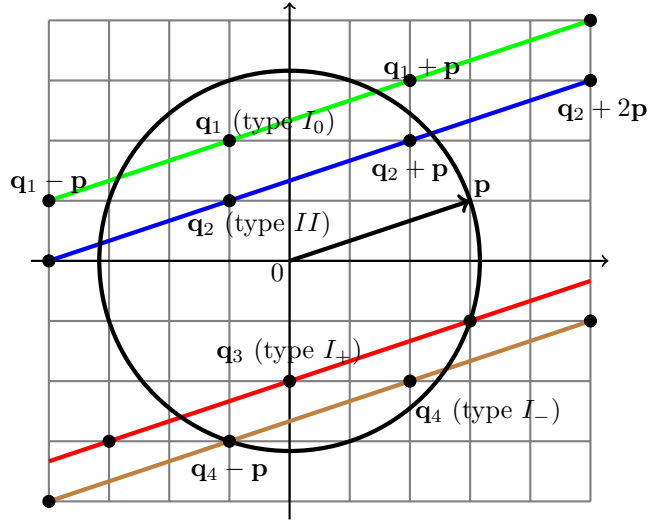
\begin{remark}\label{classf}
Points $\mathbf q \in \mathbb Z^{2}$ are said to be of type $0, I, II$ respectively, if the set $\Sigma_{B,\bq}=\{\bq+n\bp:n \in \bbZ\}$ contains zero, one or two points in the open disk of radius $\mathbf p$. Points of type $I$ are further classified as follows: $\mathbf q$ is of type $I_{0}$ if $\hat{\mathbf q}$ is inside the open disk of radius $\mathbf p$ and both $\hat{\mathbf q}+\mathbf p$ and $\hat{\mathbf q}-\mathbf p$ have norms greater than $\mathbf p$, $\mathbf q$ is of type $I_{+}$ if $\hat{\mathbf q}$ is inside the open disk of radius $\mathbf p$ and $\hat{\mathbf q}+\mathbf p$ has the same norm as $\mathbf p$, and $\mathbf q$ is of type $I_{-}$ if $\hat{\mathbf q}$ is inside the open disk of radius $\mathbf p$ and $\hat{\mathbf q}-\mathbf p$ has the same norm as $\mathbf p$. For example, see Figure \ref{Classes of points}, let $\bp=(3,1)$. Then $\hat{\bq}=(-2,3)$ is of type $0$, 
$\hat{\bq}=(-1,2)$ is of type $I_{0}$, $\hat{\bq}=(0,-2)$ is of type $I_{+}$, $\hat{\bq}=(2,-2)$ is of type $I_{-}$ and $\hat{\bq}=(-1,1)$ is of type $II$. Notice that if $\mathbf q$ is of type $I_{0}$, then $- \mathbf q$ is also necessarily of type $I_{0}$. Also, if $\mathbf q$ is of type $I_{+}$, then $-\mathbf q$ is of type $I_{-}$ and vice versa, if $\mathbf q$ is of type $I_{-}$, then $-\mathbf q$ is of type $I_{+}$.

\end{remark}
\begin{remark}\label{rho1}
If $\bq$ is of type $I_{0}$, then $\|\mathbf q\| < \|\mathbf p\|$ and $\|\mathbf q+n\mathbf p\| > \|\mathbf p\|$ for all $n \neq 0$. Consequently, see \eqref{rhon},   $\rho_{0}<0$ and $\rho_{n}>0$ for all $n \neq 0$. If $\bq$ is of type $I_{+}$, then $\|\mathbf q\| < \|\mathbf p\|$, $\|\mathbf q+\mathbf p\| = \|\mathbf p\|$ and $\|\mathbf q+n \mathbf p\| > \|\mathbf p\|$ for all $n \neq 0,1$. Therefore $\rho_{0}<0$ and $\rho_{1}=0$ and $\rho_{n}>0$ for all $n \neq 0,1$. Similarly, if $\bq$ is of type $I_{-}$, then $\rho_{0}<0$ and $\rho_{-1}=0$ and $\rho_{n}>0$ for all $n \neq 0,-1$.
\end{remark}

In what follows, %
for a point $\bq$ of type $I$, 
we will drop hat in the notation $\hat{\bq}$ and assume that $\bq \in \bbZ^{2}$ satisfies $\|\bq\| < \|\bp\|$. Our main goal is to prove instability of type $I$ flows and in what follows we shall assume that $\mathbf q$ is of type $I$. In Section 2 below, we first study the case when $\mathbf q$ is of type $I_{0}$ and then in Subsection 2.1 we outline changes to the main results when $\mathbf q$ is of type $I_{+}$ and $I_{-}$.

\begin{notation}\label{cf-notation}
In what follows we shall sometimes abbreviate a continued fraction of the form 
\begin{equation*}
f=\cfrac{1}{
a_{1}+\cfrac{1}{
a_{2}+\dots}}
\end{equation*} 
by the expression $[a_{1};a_{2};\ldots]$ and use $f^{k}$ to denote the k-th truncation of the continued fraction above, i.e., 
\begin{equation*}
f^{k}=\cfrac{1}{ a_{1}+\cfrac{1}{a_{2}+\cfrac{1}{\ddots+\cfrac{1}{ a_{k}}}}},
\end{equation*}
which we shall abbreviate by $[a_{1};a_{2};\ldots;a_{k}]$.
\end{notation}
\section{The main instability theorem}
Consider the eigenvalue problem 
\begin{equation}
L_{B,\mathbf q}(w_{n})=\lambda (w_{n}).
\end{equation} 
This can be rewritten, see \eqref{Lbqnew}, as
\begin{equation}\label{evns}
\rho_{n-1}w_{n-1}-\rho_{n+1}w_{n+1}=\lambda w_{n}+ \nu \|\mathbf q + n \mathbf p\|^{2}w_{n}.
\end{equation}
Assuming first that our steady state is of type $I_{0}$ (recall Remark \ref{rho1} above, $\rho_{0}<0$ and $\rho_{n}>0$ for all $n \neq 0$), our main goal is to construct solutions to \eqref{evns} where the eigenvector sequence $(w_{n})$ is such that $n^{2}(w_{n})$ belongs to the space $\ell_{2}(\mathbb Z)$ and the eigenvalue $\lambda$ has positive real part.

Let us denote by $c_{n}=  \|\mathbf q + n \mathbf p\|^{2}$ and, also, let us denote by 
\begin{equation}\label{an}
a_{n}:=a_{n}(\lambda, \nu)= \frac{\lambda + \nu c_{n}}{ \rho_{n}} =\frac{\lambda + \nu \|\mathbf q + n \mathbf p\|^{2}}{\rho_{n}},
\end{equation}
and note that $a_{n}(\lambda,\nu) \to \infty$ as $n \to \pm \infty$ and $a_{0}(\lambda,\nu)= \frac{\lambda+\nu \|\mathbf q\|^{2}}{\rho_{0}}$.
Assuming the eigenvalue equation \eqref{evns} has a positive eigenvalue $\lambda>0$, if we let $z_{n}=\rho_{n}w_{n}$, then, we can rewrite \eqref{evns} as 
\begin{equation}\label{znformula}
z_{n-1}-z_{n+1}= a_{n}z_{n}.
\end{equation}
Assuming $z_{n} \neq 0$ for every $n$, if we denote $u_{n}=z_{n-1}/z_{n}$, the above equation reduces to $u_{n}-\frac{1}{u_{n+1}}=a_{n},  \quad n \in \mathbb Z$, 
rewriting which we obtain
\begin{equation}\label{unformula1}
u_{n}=a_{n}+\frac{1}{u_{n+1}}, \quad n \in \mathbb Z.
\end{equation}
Iterating this equation above forwards for $n \geq 0$, consider the following continued fraction defined for $n \geq 0$
\begin{equation}\label{un1}
u_{n}^{(1)}(\lambda,\nu):= a_{n}+\cfrac{1}{
a_{n+1}+\cfrac{1}{
a_{n+2}+\dots}}, \, \quad n=0,1,2,\ldots.
\end{equation}
Since $\sum_{n} a_{n}$ diverges, for each $n \geq 0$ the continued fraction above converges, by Van Vleck Theorem, see \cite[Theorem 4.29]{JT}.  Moreover, using the fact that $a_{n}(\lambda,\nu) \to \infty$ as $n \to \infty$, one also obtains, see Lemma \ref{ulemma} Item (2) below, that 
\begin{equation}\label{un1inf}
\lim_{n \to \infty} u_{n}^{(1)}(\lambda,\nu) = \infty.
\end{equation}
One can prove, see \cite[pp. 2063-2064]{DLMVW20}, that the continued fraction expression $u_{n}^{(1)}(\lambda,\nu)$ in \eqref{un1} obtained by iterating \eqref{unformula1} above is equal to $u_{n}$ given by \eqref{unformula1} for every $n$. 
That is, we have (recall notation defined in Notation \ref{cf-notation}) 
\begin{equation}
u_{n}=u_{n}^{(1)}(\lambda,\nu) =a_{n}+ [a_{n+1};a_{n+2};\ldots], \quad n \geq 0.
\end{equation}
Similarly, \eqref{unformula1} can be rewritten as 
\begin{equation}\label{unformula2}
u_{n+1}=\frac{-1}{a_{n}-u_{n}}, \quad n \in \mathbb Z. 
\end{equation}
Iterating this for $n \leq 0$, consider the continued fractions 
\begin{equation}\label{un2}
u_{n}^{(2)}(\lambda,\nu):= \cfrac{-1}{
a_{n-1}+\cfrac{1}{
a_{n-2}+\dots}}, \quad n=0,-1,-2,\ldots.
\end{equation}
Again, by Van Vleck theorem, the continued fraction expressions converge for all $n \leq -1$. Moreover, using the fact that $a_{n}(\lambda,\nu) \to \infty$ as $n \to -\infty$, we obtain that 
\begin{equation}\label{un2inf}
\lim_{n \to -\infty} u_{n}^{(2)}(\lambda,\nu) = 0.
\end{equation}
As before, the proof that the continued fraction expression $u_{n}^{(2)}(\lambda,\nu)$ defined by the second equality above is equal to $u_{n}$ is given in \cite[pp. 2063-2064]{DLMVW20}. That is, 
\begin{equation}
u_{n}=u_{n}^{(2)}(\lambda,\nu)= - [a_{n-1};a_{n-2};\ldots], \quad n \leq 0.
\end{equation}
Since the expressions for $u_{0}$, given, respectively, by equations \eqref{un1} and \eqref{un2} must match, we must have $u_{0}^{(1)}(\lambda,\nu)=u_{0}^{(2)}(\lambda,\nu)$, i.e.,
\begin{equation}\label{un12}
u_{0}^{(1)}(\lambda,\nu)= a_{0}+[a_{1};a_{2};\ldots]=u_{0}^{(2)}(\lambda,\nu)=-[a_{-1};a_{-2};\ldots].
\end{equation}
Denote 
\begin{equation}\label{f}
f(\lambda, \nu)= [a_{1}(\lambda, \nu);a_{2}(\lambda, \nu);\ldots]
\end{equation}
and
\begin{equation}\label{g}
g(\lambda, \nu)=[a_{-1}(\lambda, \nu);a_{-2}(\lambda, \nu);\ldots].
\end{equation}
Notice that
\begin{equation}\label{fgu012}
f(\lambda,\nu)=u_{0}^{(1)}(\lambda,\nu)-a_{0}(\lambda,\nu), \quad g(\lambda,\nu)=-u_{0}^{(2)}(\lambda,\nu).
\end{equation}
Equation \eqref{un12} is equivalent to 
\begin{equation}\label{fg}
a_{0}(\lambda, \nu)+f(\lambda, \nu)+g(\lambda, \nu)=0.
\end{equation}
Thus the existence of a positive eigenvalue to equation \eqref{evns} implies the existence of a root $\lambda>0$ to equation \eqref{fg}. One can also run the construction backwards, see Theorem \ref{mainthm} below. Beginning with a positive root $\lambda >0$ to equation \eqref{fg}, one can define (using the expressions for $a_{n}$ in \eqref{an}) continued fractions $u_{n}^{(1)}(\lambda,\nu)$ and $u_{n}^{(2)}(\lambda,\nu)$ and use these to construct eigenvectors $(w_{n})$ so that the eigenvalue equation \eqref{evns} has an eigenvalue $\lambda > 0$. Thus, the eigenvalue problem \eqref{evns} has an unstable eigenvalue $\lambda > 0$ if and only if equation \eqref{fg} has a root $\lambda > 0$.
\begin{remark}\label{root}
Our strategy in proving that \eqref{fg} has a positive root $\lambda >0$ is as follows. Considered as a function of $\lambda \geq 0$, $-a_{0}(\lambda,\nu)= -(\lambda+\nu \|\mathbf q\|^{2})/\rho_{0}$ is a straight line of positive slope $-1/\rho_{0}$ (recall $\rho_{0}<0$) and positive $y$-intercept $-\nu \|\mathbf q\|^{2}/\rho_{0}$. Also, see Lemma \ref{ulemma} and Lemma \ref{root} below, for $\lambda > 0$ $f(\lambda, \nu)$ and $g(\lambda, \nu)$ are non negative, continuous functions in $\lambda$ and $f(\lambda, \nu) $ and $g(\lambda, \nu)$ go to $0$ as $\lambda \to \infty$. Moreover, $f(\lambda,\nu)$ and $g(\lambda,\nu)$ converge, respectively, to the finite, positive numbers $f(0,\nu)$ and $g(0,\nu)$ as $\lambda \to 0^{+}$, see Lemma \ref{ff0} below. This then means that in the $\lambda$-plane, the continuous curves $f(\lambda,\nu)+g(\lambda,\nu)$ and $-a_{0}(\lambda,\nu)$ will intersect at a point $\lambda > 0$ provided that  $f(0,\nu)+g(0,\nu)$ is greater than the $y$-intercept of $a_{0}(\lambda,\nu)$, $-\nu \|\mathbf q\|^{2}/\rho_{0}$. This inequality holds for small enough $\nu$, see Lemma \ref{nu0} below. Our strategy in proving that $f(0,\nu)+g(0,\nu)> -\nu \|\mathbf q\|^{2}/\rho_{0}$ for $\nu$ small enough is as follows. Considered as functions of the variable $\nu$, $f(0,\nu)+g(0,\nu)$ are continuous in $\nu$ and are bounded below by the even truncations $f(0,\nu)^{2k}+g(0,\nu)^{2k}$ for every fixed $k$. The straight line $-\nu \|\mathbf q\|^{2}/\rho_{0}$ passes through the origin and has positive slope $-\|\mathbf q\|^{2}/\rho_{0}$ and we show in Lemma \ref{even-trunc-properties} below that the even truncations $f(0,\nu)^{2k}+g(0,\nu)^{2k}$ are non-negative on $[0,+\infty)$ and satisfy the limits $\lim_{\nu \to 0} f(0,\nu)^{2k} = \lim_{\nu \to 0} g(0,\nu)^{2k}=0$ and $\lim_{\nu \to +\infty} f(0,\nu)^{2k} = \lim_{\nu \to 0} g(0,\nu)^{2k}=0$. Moreover, the slopes at $0$ (i.e., derivative with respect to $\nu$ at $0$) $f'(0,\nu)^{2k}|_{\nu=0}$ and $g'(0,\nu)^{2k}|_{\nu=0}$ are greater than the slope of the line $-\nu \|\mathbf q\|^{2}/\rho_{0}$ for large enough $k$. This then means, see Lemma \ref{nu0} below, that there will exist a $\nu_{0}>0$ such that for all $\nu \in (0,\nu_{0})$, there holds  $f(0,\nu)+g(0,\nu)>-\nu \|\mathbf q\|^{2}/\rho_{0}$. See Figure \ref{ENS} for a numerical illustration described in this Remark.
\end{remark}

We summarize the properties of the continued fractions $u_{n}^{(1)}(\lambda,\nu)$ and $u_{n}^{(2)}(\lambda,\nu)$ in the following Lemma.
\begin{lemma}\label{ulemma}
Fix $\nu>0$, and consider the continued fractions $u_{n}^{(1)}(\lambda,\nu)$ and $u_{n}^{(2)}(\lambda,\nu)$ defined by equations \eqref{un1} and \eqref{un2} for all $\lambda >0$. $u_{n}^{(1)}(\lambda,\nu)$ and $u_{n}^{(2)}(\lambda,\nu)$ satisfy the following properties:
\begin{enumerate}
\item[(1)] $u_{n}^{(1)}(\lambda,\nu)$ and $u_{n}^{(2)}(\lambda,\nu)$ are convergent continued fractions and the functions $u_{n}^{(1)}(\cdot,\nu)$ and $u_{n}^{(2)}(\cdot,\nu)$ are continuous in $\lambda$. 
\item[(2)] There exist limits
\begin{equation}\label{limitsuninfty}
\lim_{n \to \infty} u_{n}^{(1)}(\lambda,\nu)=+\infty, \quad \lim_{n \to -\infty} u_{n}^{(2)}(\lambda,\nu)=0, \quad \lambda >0.
\end{equation}
\item[(3)] For some $0<q < 1$ and $C>0$, the following hold 
\begin{equation}\label{u1ineq}
(|u_{1}^{(1)}(\lambda,\nu)u_{2}^{(1)}(\lambda,\nu)\ldots u_{n}^{(1)}(\lambda,\nu)|)^{-1} \leq Cq^{n},  \mbox{ for all }n \geq 0,
\end{equation}
\begin{equation}\label{u2ineq}
(|u_{n}^{(2)}(\lambda,\nu)\ldots u_{-2}^{(2)}(\lambda,\nu)u_{-1}^{(2)}(\lambda,\nu)u_{0}^{(2)}(\lambda,\nu)|) \leq Cq^{-n}, \mbox{ for all } n \leq -1.
\end{equation}
\end{enumerate}
\end{lemma}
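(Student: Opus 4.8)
The plan is to extract all three items from two structural facts about the continued-fraction entries $a_n(\lambda,\nu)=(\lambda+\nu\|\mathbf q+n\mathbf p\|^2)/\rho_n$. First, since $\mathbf q$ is of type $I_0$ we have $\rho_n>0$ for every $n\neq0$ (Remark \ref{rho1}), so for fixed $\lambda>0$, $\nu>0$ every entry with nonzero index is strictly positive: $a_n>0$ for all $n\geq1$ (the entries feeding $u^{(1)}$) and $a_j>0$ for all $j\leq-1$ (those feeding $u^{(2)}$). Second, $a_n\to+\infty$ as $n\to\pm\infty$, so in particular the entries do not even tend to zero and $\sum a_n=\infty$; the convergence assertion in Item (1) is then exactly the Van Vleck/Seidel--Stern situation already invoked after \eqref{un1} and \eqref{un2}. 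I would therefore begin by recording these positivity and growth facts together with the elementary inequalities they yield: from $u_n^{(1)}=a_n+1/u_{n+1}^{(1)}$ with $u_{n+1}^{(1)}>0$ one gets $u_n^{(1)}>a_n>0$ for all $n\geq1$; and from $u_n^{(2)}=-1/(a_{n-1}+\dots)$ with positive tail one gets $u_n^{(2)}<0$ and $0<|u_n^{(2)}|<1/a_{n-1}$ for all $n\leq0$.

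Item (2) then drops out immediately: $u_n^{(1)}>a_n\to+\infty$ forces $u_n^{(1)}\to+\infty$, while $|u_n^{(2)}|<1/a_{n-1}\to0$ forces $u_n^{(2)}\to0$. For Item (3) I would use $a_n\to+\infty$ to fix $m>1$ and an index $N$ with $a_n\geq m$ for all $|n|\geq N$. For the $u^{(1)}$ product, this gives $u_n^{(1)}\geq m$ for $n\geq N$, so $|u_1^{(1)}\cdots u_n^{(1)}|\geq(\prod_{k=1}^{N-1}a_k)\,m^{\,n-N+1}$; taking reciprocals yields the bound $Cq^n$ with $q=1/m$ once the finitely many strictly positive initial factors are absorbed into $C$. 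For the $u^{(2)}$ product, $|u_k^{(2)}|<1/a_{k-1}\leq1/m$ as soon as $k-1\leq-N$, so in $|u_n^{(2)}\cdots u_0^{(2)}|$ all but finitely many factors are $\leq1/m$, giving $\leq Cq^{|n|}=Cq^{-n}$ after the finitely many bounded factors are absorbed into $C$.

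The only genuinely technical point is the continuity in $\lambda$ claimed in Item (1), since it requires passing to the limit inside the continued fraction. Here the plan is to show that the truncations $u_n^{(1),k}(\cdot,\nu)$ and $u_n^{(2),k}(\cdot,\nu)$, which are rational and hence continuous in $\lambda$, converge uniformly on every compact interval $[\lambda_1,\lambda_2]\subset(0,\infty)$. Because $\rho_n>0$ for $n\neq0$, each $a_n(\lambda,\nu)$ is affine and increasing in $\lambda$, so $a_n(\lambda)\geq a_n(\lambda_1)>0$ uniformly on the interval, and $a_n(\lambda_1)\to\infty$ furnishes a single index $N$ past which $a_n\geq m>1$ for all $\lambda\in[\lambda_1,\lambda_2]$. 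The standard successive-convergent identity $|u^{(k+1)}-u^{(k)}|=1/|B_kB_{k+1}|$ together with the recursion $B_k=a_kB_{k-1}+B_{k-2}$ then forces the denominators to grow at least geometrically at a rate uniform in $\lambda$, so the truncations are uniformly Cauchy and their limit is continuous. I expect this uniform geometric bookkeeping to be the main obstacle, although it is routine for continued fractions with positive elements; throughout, the positivity and unbounded growth of the $a_n$ coming from the type $I_0$ hypothesis are precisely what make both the sign analysis and all the estimates go through cleanly.
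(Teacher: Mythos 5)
Your proposal is correct, and for Items (2) and (3) it is essentially identical to the paper's proof: the paper also derives the limits from $u_n^{(1)}\geq a_n\to+\infty$ and $|u_n^{(2)}|\leq 1/a_{n-1}\to 0$, and obtains the geometric product bounds by fixing a threshold $K>1$ (your $m$), splitting off the finitely many initial factors into the constant $C$, and estimating the tail factors by $K^{n-N-1}$ (resp.\ $q^{|n|-N-1}$); whether one quotes the limits of Item (2) or the growth $a_n\geq m$ directly is immaterial, since $u_n^{(1)}\geq a_n$. The one place you genuinely depart from the paper is the continuity claim in Item (1): the paper disposes of both convergence and continuity in $\lambda$ by citing the Van Vleck and Stieltjes--Vitali theorems (Theorems 4.29 and 4.30 of Jones--Thron), whereas you cite Van Vleck/Seidel--Stern only for convergence and then prove continuity by hand, showing the rational truncations converge uniformly on compact subintervals of $(0,\infty)$ via the successive-convergent identity and the denominator recursion $B_k=a_kB_{k-1}+B_{k-2}$, using that $a_n(\cdot,\nu)$ is affine increasing in $\lambda$ for $n\neq 0$ so that a single index $N$ works uniformly on $[\lambda_1,\lambda_2]$. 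This argument is sound (with positive entries and $a_k\geq m>1$ past $N$ one gets $B_k\geq m^{\,k-N}B_N$ with $B_N$ bounded below on the compact interval, hence a uniform geometric tail), and it buys self-containedness at the cost of the bookkeeping you describe; the paper's citation is shorter but leans on the analyticity machinery of Stieltjes--Vitali. Both are legitimate, and all the structural inputs you isolate (positivity of $a_n$ for $n\neq 0$ from the type $I_0$ hypothesis, divergence of $\sum a_n$, and $a_n\to\infty$) are exactly those the paper uses.
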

\begin{proof}
(1) This follows from the Van Vleck and the Stieltjes-Vitali theorems, see \cite[Theorem 4.29 and Theorem 4.30]{JT} and the facts that $\sum_{n \geq 0} a_{n}$ and $\sum_{n \geq 0} a_{-n}$ diverge. These two theorems guarantee, in particular, that the continued fractions $u_{n}^{(1)}(\lambda,\nu)$ and $u_{n}^{(2)}(\lambda,\nu)$ converge for every $\lambda >0$ and moreover, the functions $u_{n}^{(1)}(\cdot,\nu)$ and $u_{n}^{(2)}(\cdot,\nu)$ are continuous in $\lambda$.

(2) Recall formula \eqref{an}, $a_{n} =\frac{\lambda + \nu \|\mathbf q + n \mathbf p\|^{2}}{\rho_{n}}$ and note that $a_{n}(\lambda,\nu) \to \infty$ as $n \to \pm \infty$. Fix $n> 0$. $u_{n}^{(1)}(\lambda,\nu)$ is positive and bounded below by $a_{n}(\lambda,\nu)$ which is a positive sequence for $n > 0$. Moreover $a_{n}(\lambda,\nu) \to \infty$ as $n \to +\infty$ implying that $u_{n}^{(1)}(\lambda,\nu) \to +\infty$ as $n \to +\infty$. 

Notice that $u_{n}^{(2)}(\lambda,\nu)$ satisfies the inequalities
\begin{equation}
\frac{-1}{a_{n-1}(\lambda,\nu)} \leq u_{n}^{(2)}(\lambda,\nu) \leq \frac{1}{a_{n-1}(\lambda,\nu)}.
\end{equation} 
Using the fact that $a_{n}(\lambda,\nu) \to +\infty$ as $n \to -\infty$ implies that $u_{n}^{(2)}(\lambda,\nu) \to 0$ as $n \to - \infty$.

(3) The estimates \eqref{u1ineq} and \eqref{u2ineq} are a consequence of the limits \eqref{limitsuninfty} in item (2). Indeed, since $u_{n}^{(1)}(\lambda,\nu) \to +\infty$ as $n \to \infty$, given a $K>1$, there exists a constant $N=N(K)$ such that if $n > N$, $u_{n}^{(1)}(\lambda,\nu) > K$. One thus has
\begin{align*}
&u_{1}^{(1)}(\lambda,\nu)u_{2}^{(1)}(\lambda,\nu)\ldots u_{n}^{(1)}(\lambda,\nu)\\&= u_{1}^{(1)}(\lambda,\nu)u_{2}^{(1)}(\lambda,\nu)\ldots u_{N}^{(1)}(\lambda,\nu)u_{N+1}^{(1)}(\lambda,\nu)\ldots u_{n}^{(1)}(\lambda,\nu)\\&
 \geq u_{1}^{(1)}(\lambda,\nu)u_{2}^{(1)}(\lambda,\nu)\ldots u_{N}^{(1)}(\lambda,\nu) K^{n-N-1} =C_{1} K^{n}, \numberthis \label{ineqproof}
\end{align*}
where the constant $C_{1}= u_{1}^{(1)}(\lambda,\nu)u_{2}^{(1)}(\lambda,\nu)\ldots u_{N}^{(1)}(\lambda,\nu) K^{-N-1}$. Put $q=1/K$ and taking the reciprocal of \eqref{ineqproof}, we obtain \eqref{u1ineq}.

Similarly, since $u_{n}^{(2)}(\lambda,\nu) \to 0$ as $n \to -\infty$, given $q \in (0,1)$, there exists an $N=N(q)$ such that if $n < N$, $|u_{n}^{(2)}(\lambda,\nu)| < q$. As before, we estimate, for $n<0$
\begin{align*}
&|u_{0}^{(2)}(\lambda,\nu)u_{-1}^{(2)}(\lambda,\nu)\ldots u_{n}^{(2)}(\lambda,\nu)|\\&= |u_{0}^{(2)}(\lambda,\nu)u_{-1}^{(2)}(\lambda,\nu)\ldots u_{N}^{(2)}(\lambda,\nu)u_{N-1}^{(2)}(\lambda,\nu)\ldots u_{n}^{(2)}(\lambda,\nu)|\\&
 \leq |u_{0}^{(2)}(\lambda,\nu)u_{-1}^{(2)}(\lambda,\nu)\ldots u_{N}^{(2)}(\lambda,\nu)| q^{n-N-1} =C q^{n}, \numberthis \label{ineqproof2}
\end{align*}
where $C = |u_{1}^{(2)}(\lambda,\nu)u_{2}^{(2)}(\lambda,\nu)\ldots u_{N}^{(2)}(\lambda,\nu)| q^{-N-1}$ which proves \eqref{u2ineq}.
\end{proof}

Notice that $f(0,\nu)$ and $g(0,\nu)$ are given by the expressions
\begin{equation}\label{f0}
f(0, \nu)= [a_{1}(0, \nu);a_{2}(0, \nu);\ldots]
\end{equation}
and
\begin{equation}\label{g0}
g(0, \nu)=[a_{-1}(0, \nu);a_{-2}(0, \nu);\ldots].
\end{equation}
Moreover, these continued fractions converge by Van Vleck theorem since %
the series $\sum_{n}a_{n}(0,\nu)$ and $\sum_{n}a_{-n}(0,\nu)$ diverge. Also, the following is true.
\begin{lemma}\label{ff0}
The continued fractions $f(\lambda,\nu)$ and $g(\lambda,\nu)$ converge, respectively, to the continued fractions $f(0,\nu)$ and $g(0,\nu)$ as $\lambda$ goes to $0^{+}$. That is,
\begin{equation}
\lim_{\lambda \to 0^{+}} f(\lambda,\nu) = f(0,\nu), \quad \lim_{\lambda \to 0^{+}} g(\lambda,\nu) = g(0,\nu).
\end{equation}
\end{lemma}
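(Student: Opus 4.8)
The plan is to upgrade the $\lambda>0$ continuity already recorded in Lemma \ref{ulemma}(1) to continuity across the endpoint $\lambda=0$, by enlarging the parameter domain from the half-line $(0,\infty)$ to a full complex disk centered at $0$ and re-running the Van Vleck/Stieltjes--Vitali argument there. Since the elements $a_n(\lambda,\nu)$ of \eqref{an} are affine, hence entire, in $\lambda$, every truncation of the continued fractions $f(\lambda,\nu)$ and $g(\lambda,\nu)$ is a rational function of $\lambda$; the task is to show these truncations converge and that the limits are holomorphic on a neighborhood of $0$, so that the one-sided limit is merely the value at $0$, i.e. $f(0,\nu)$ and $g(0,\nu)$ from \eqref{f0}--\eqref{g0}.

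First I would fix the disk. Because $\bq$ is of type $I_0$, Remark \ref{rho1} together with \eqref{rhon} gives $0<\rho_n<1$ for every $n\neq0$, while $c_n=\|\bq+n\bp\|^2>\|\bp\|^2$ for all $n\neq0$; set $c_*=\min_{n\neq0}c_n>0$ and choose $0<\delta<\nu c_*$, with $D_\delta=\{\lambda:|\lambda|<\delta\}$. For $\lambda\in D_\delta$ and $n\neq0$ one has $\re(\lambda+\nu c_n)\ge\nu c_*-\delta>0$ and $|\Im(\lambda+\nu c_n)|=|\Im\lambda|\le\delta$, whence, using $\rho_n>0$,
\begin{equation*}
|\arg a_n(\lambda,\nu)|=|\arg(\lambda+\nu c_n)|\le\arctan\frac{\delta}{\nu c_*-\delta}=:\frac{\pi}{2}-\varepsilon,
\end{equation*}
with $\varepsilon>0$ independent of $n$ and of $\lambda\in D_\delta$. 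Moreover $\re a_n(\lambda,\nu)\ge\nu c_*-\delta>0$ for all $n\neq0$, so the elements are bounded away from $0$ and $\sum_n|a_n(\lambda,\nu)|=\infty$ along each tail. Thus the uniform sector hypothesis of Van Vleck's theorem holds on $D_\delta$ for both $(a_n)_{n\ge1}$ and $(a_{-n})_{n\ge1}$.

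By Van Vleck \cite[Theorem 4.29]{JT} the continued fractions $f(\lambda,\nu)$ and $g(\lambda,\nu)$ then converge at every $\lambda\in D_\delta$, reproducing $f(0,\nu)$ and $g(0,\nu)$ at $\lambda=0$. Their truncations are holomorphic on $D_\delta$ and uniformly bounded there (the approximants of a continued fraction whose elements lie in a fixed sector $|\arg a_n|\le\pi/2-\varepsilon$ remain in a bounded value region), so the Stieltjes--Vitali theorem \cite[Theorem 4.30]{JT} gives uniform convergence on compact subsets of $D_\delta$ to holomorphic limits, which must coincide with $f(\cdot,\nu)$ and $g(\cdot,\nu)$. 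Continuity of a holomorphic function at the interior point $\lambda=0$ then yields $\lim_{\lambda\to0}f(\lambda,\nu)=f(0,\nu)$ and $\lim_{\lambda\to0}g(\lambda,\nu)=g(0,\nu)$, in particular the one-sided limits claimed. The only delicate point, and the one place type $I_0$ is genuinely used, is the choice of $\delta$: it is the strict positivity of $c_*=\min_{n\neq0}c_n$ together with $c_n\to\infty$ that makes the sector bound uniform in $n$ and keeps $\lambda=0$ in the interior of the convergence region. A purely real-variable alternative would instead prove that the truncations $f(\lambda,\nu)^k$ converge to $f(\lambda,\nu)$ uniformly on $[0,\delta]$, exploiting the monotonicity $a_n(\lambda,\nu)\ge a_n(0,\nu)$ for $n\neq0$ to bound the tails uniformly, after which continuity of each rational truncation passes to the uniform limit.
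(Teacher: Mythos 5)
Your proposal is correct, but it takes a genuinely different route from the paper. The paper's proof is purely real-variable and elementary: it uses the sandwich property \eqref{fconv} (even truncations of a positive-element continued fraction increase, odd truncations decrease, and the value sits between them), picks $K$ so that the gap $|f(0,\nu)^{2K+1}-f(0,\nu)^{2K}|$ is below $\epsilon/2$, uses the fact that these two finite truncations are rational in $\lambda$ with no poles on $[0,A]$ (hence continuous at $\lambda=0$), and closes with a two-case triangle-inequality estimate. You instead complexify the parameter: on a disk $D_\delta$ with $0<\delta<\nu c_*$ you verify the Van Vleck sector condition uniformly in $n\neq 0$ and $\lambda\in D_\delta$, so the continued fractions converge at every point of $D_\delta$, and Stieltjes--Vitali upgrades this to locally uniform convergence to a holomorphic limit; continuity at the interior point $\lambda=0$ is then automatic. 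This is a natural strengthening of exactly the two theorems the paper already invokes in Lemma \ref{ulemma}(1) for $\lambda>0$, and it buys strictly more than the lemma asserts: holomorphy of $f(\cdot,\nu)$ and $g(\cdot,\nu)$ in a complex neighborhood of $0$ rather than one-sided continuity, together with a transparent identification of where type $I_0$ enters (the uniform lower bound $c_*=\min_{n\neq 0}\|\bq+n\bp\|^{2}>\|\bp\|^{2}>0$ keeps $\lambda=0$ interior to the convergence region). The paper's route buys elementarity: no complex analysis or normal families, only monotonicity of truncations. One point in your write-up should be stated more carefully: uniform boundedness of the approximants on $D_\delta$ does not follow from the sector condition alone (if $a_1$ could approach $0$, the approximant $1/a_1$ would blow up); it also needs the element lower bound, which you do establish --- since every tail $[a_{j};\ldots;a_{k}]$ has positive real part, one gets $|f^{k}(\lambda,\nu)|\le 1/\re a_{1}(\lambda,\nu)\le(\nu c_*-\delta)^{-1}$ on $D_\delta$. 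Alternatively, the Jones--Thron form of Stieltjes--Vitali needs only that the approximants omit two fixed values, which holds here because they all lie in the open right half-plane.
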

\begin{proof}
Let us prove that $\lim_{\lambda \to 0^{+}} f(\lambda,\nu) = f(0,\nu)$, the proof that\\ $\lim_{\lambda \to 0^{+}} g(\lambda,\nu) = g(0,\nu)$ is similar. It follows, from standard facts of continued fractions, see for example \cite[Chapter 2]{JT}, that the odd $k^\text{th}$ truncations $f(\lambda,\nu)^{2k+1}$ form a monotonically decreasing sequence and the even $k^\text{th}$ truncations $f(\lambda,\nu)^{2k}$ form a monotonically increasing sequence and $f(\lambda,\nu)$ is sandwiched in between these. That is, we have, for every $k \geq 1$,
\begin{equation}\label{fconv}
f(\lambda,\nu)^{2k-2} \leq f(\lambda,\nu)^{2k} \leq f(\lambda,\nu) \leq f(\lambda,\nu)^{2k+1} \leq f(\lambda,\nu)^{2k-1}.
\end{equation}
Similar facts hold for the continued fraction $f(0,\nu)$ and its $k^{\text{th}}$ truncations $f(0,\nu)^{k}$.

Given $\epsilon>0$, we would like to find a $\delta=\delta(\epsilon)$ such that, for all $0< \lambda < \delta$, there holds
\begin{equation}
|f(\lambda,\nu)-f(0,\nu)| < \epsilon.
\end{equation}
Since $f(0,\nu)^{k}$ converges to $f(0,\nu)$ as $k \to + \infty$, fixing $\epsilon>0$, there exists a $K=K(\epsilon)$ large enough such that the following estimate holds
\begin{equation}\label{ff02}
II:=|f(0,\nu)^{2K+1}-f(0,\nu)^{2K}| < \frac{\epsilon}{2}.
\end{equation}
Since $f(\lambda,\nu)^{2K+1}$ and $f(0,\nu)^{2K+1}$ are finite fractions, where $a_{n}(\lambda,\nu) \to a_{n}(0,\nu)$ for every $n$, they are continuous in $\lambda$. Moreover, the finite fractions $f(\lambda,\nu)^{2K+1}$ and $f(0,\nu)^{2K+1}$ are ratios of polynomials in $\lambda$ that are positive for every non-negative $\lambda \geq 0$ and thus have no roots in the interval $[0,A]$ for any $A >0$. We thus have, for a fixed $\epsilon>0$ and fixed $K=K(\epsilon)$ chosen above so that estimate \eqref{ff02} holds, there exists a $\delta_{1}=\delta_{1}(K,\epsilon)$ such that if $0<\lambda < \delta_{1}$ then
\begin{equation}\label{ff01}
I:=|f(\lambda,\nu)^{2K+1}-f(0,\nu)^{2K+1}| < \frac{\epsilon}{2},
\end{equation}
and similarly there exists a $\delta_{2}=\delta_{2}(K,\epsilon)$ such that if  $0<\lambda < \delta_{2}$ then
\begin{equation}\label{ff04}
IV:=|f(\lambda,\nu)^{2K}-f(0,\nu)^{2K}| < \frac{\epsilon}{2}.
\end{equation}
Now choose  $\delta = \delta (\epsilon,K)=\mbox{ min }\{\delta_{1},\delta_{2}\}$ so that both \eqref{ff01} and \eqref{ff04} hold. We shall now estimate $|f(\lambda,\nu)-f(0,\nu)|$. There are two possibilities. Either $f(\lambda,\nu)> f(0,\nu)$ or $f(\lambda,\nu)< f(0,\nu)$ (we need not consider the case when $f(\lambda,\nu)=f(0,\nu)$ since $|f(\lambda,\nu)-f(0,\nu)|=0$).

Assuming that $f(\lambda,\nu)>f(0,\nu)$, using \eqref{fconv}, we have, 
\begin{equation}\label{ff01main}
|f(\lambda,\nu)-f(0,\nu)| \leq |f(\lambda,\nu)^{2K+1}-f(0,\nu)^{2K}|,
\end{equation}
where we used the facts, see \eqref{fconv}, that $f(\lambda,\nu)^{2K+1} \geq f(\lambda,\nu)$ and $f(0,\nu)^{2K} \leq f(0,\nu)$. Assuming that  $0<\lambda < \delta$, we estimate \eqref{ff01main} by
\begin{align}
|f(\lambda,\nu)^{2K+1}-f(0,\nu)^{2K}| &\leq |f(\lambda,\nu)^{2K+1}-f(0,\nu)^{2K+1}|\nonumber \\&+|f(0,\nu)^{2K+1}-f(0,\nu)^{2K}|  =I+II< \frac{\epsilon}{2}+\frac{\epsilon}{2}=\epsilon, 
\end{align}
using equations \eqref{ff02}and \eqref{ff01}, where $0<\lambda < \delta$.

Now, suppose $f(\lambda,\nu)<f(0,\nu)$, using \eqref{fconv}, we have, 
\begin{equation}\label{ff01main2}
|f(\lambda,\nu)-f(0,\nu)| \leq |f(0,\nu)^{2K+1}-f(\lambda,\nu)^{2K}|,
\end{equation}
where we used the facts, see \eqref{fconv}, that $f(0,\nu)^{2K+1} \geq f(0,\nu)$ and $f(\lambda,\nu)^{2K} \leq f(\lambda,\nu)$. Assuming that  $0<\lambda < \delta$, we estimate \eqref{ff01main2} by
\begin{align}
|f(0,\nu)^{2K+1}-f(\lambda,\nu)^{2K}| \leq& |f(0,\nu)^{2K+1}-f(0,\nu)^{2K}|+|f(0,\nu)^{2K}-f(\lambda,\nu)^{2K}| \nonumber \\& =II+IV<\frac{\epsilon}{2}+\frac{\epsilon}{2}=\epsilon,
\end{align}
using equations \eqref{ff02}and \eqref{ff04}, where  $0<\lambda < \delta$.
\end{proof}
Recall, from \eqref{an}, the formulas $a_{n}(0,\nu)= \frac{\nu\|\mathbf q+n\mathbf p\|^{2}}{\rho_{n}}$, where $\rho_{n}=1-\frac{\|\mathbf p\|^{2}}{\|\mathbf q+n\mathbf p\|^{2}}=\frac{\|\mathbf q+n\mathbf p\|^{2}-\|\mathbf p\|^{2}}{\|\mathbf q+n\mathbf p\|^{2}}$. Thus,
\begin{equation}
a_{n}(0,\nu)=\frac{\nu \|\mathbf q+n\mathbf p\|^{2}}{\rho_{n}}=\frac{\nu \|\mathbf q+n\mathbf p\|^{4}}{\|\mathbf q+n\mathbf p\|^{2}-\|\mathbf p\|^{2}}.
\end{equation}
Let us introduce the following notation. Let 
\begin{equation}\label{bn}
b_{n}=\frac{a_{n}(0,\nu)}{\nu} = \frac{ \|\mathbf q+n\mathbf p\|^{4}}{\|\mathbf q+n\mathbf p\|^{2}-\|\mathbf p\|^{2}}.
\end{equation}
The continued fractions $f(0,\nu)$ and $g(0,\nu)$ (recall \eqref{f0} and \eqref{g0}) can be rewritten as
\begin{equation}\label{f0b}
f(0, \nu)= [\nu b_{1};\nu b_{2};\ldots]
\end{equation}
and
\begin{equation}\label{g0b}
g(0, \nu)=[\nu b_{-1};\nu b_{-2};\ldots].
\end{equation}
Let $f(0,\nu)^{k}$ denote the k-th truncation of the continued fraction $f(0,\nu)$. That is,
\begin{equation}
f(0,\nu)^{k}=[\nu b_{1};\nu b_{2};\ldots;\nu b_{k}].
\end{equation}
And similarly, let $g(0,\nu)^{k}$ denote the k-th truncation of the continued fraction $g(0,\nu)$. In what follows we shall need certain properties of the even truncations $f(0,\nu)^{2k}$ and $g(0,\nu)^{2k}$ of the continued fractions $f(0,\nu)$ and $g(0,\nu)$. One can see, by direct calculation, that
\begin{equation}\label{f2}
f(0,\nu)^{2}=\cfrac{1}{\nu b_{1}+\cfrac{1}{\nu b_{2}}}=\frac{ b_{2}\nu}{1+\nu^{2}b_{1}b_{2}}
\end{equation}
and
\begin{equation}\label{f4}
f(0,\nu)^{4}=[\nu b_{1};\nu b_{2};\nu b_{3};\nu b_{4}]=\frac{(b_{2}+b_{4})\nu+b_{2}b_{3}b_{4}\nu^{3}}{1+(b_{1}b_{2}+b_{1}b_{4}+b_{3}b_{4})\nu^{2}+b_{1}b_{2}b_{3}b_{4}\nu^{4}}.
\end{equation}
We prove in Lemma \ref{even-trunc-properties} below that the even truncations $f(0,\nu)^{2k}$ are of the form
\begin{equation}\label{f2k}
f(0,\nu)^{2k}=\frac{(b_{2}+b_{4}+\ldots +b_{2k})\nu+p_{3}\nu^{3}+p_{5}\nu^{5}+\ldots+p_{2k-1}\nu^{2k-1}}{1+q_{2}\nu^{2}+q_{4}\nu^{4}+\ldots+q_{2k}\nu^{2k}},
\end{equation}
where the coefficients $p_{3},p_{5},\ldots, p_{2k-1}, q_{2},q_{4},\ldots q_{2k}$ are all positive. Similarly, $g(0,\nu)^{2k}$ is given by the formula
\begin{equation}\label{g2k}
g(0,\nu)^{2k}=\frac{(b_{-2}+b_{-4}+\ldots +b_{-2k})\nu+r_{3}\nu^{3}+r_{5}\nu^{5}+\ldots+r_{2k-1}\nu^{2k-1}}{1+s_{2}\nu^{2}+s_{4}\nu^{4}+\ldots+s_{2k}\nu^{2k}},
\end{equation}
where the coefficients $r_{3},r_{5},\ldots, r_{2k-1}, s_{2},s_{4},\ldots s_{2k}$ are all positive. 
\begin{lemma}\label{even-trunc-properties}
Fix $k>0$, and consider the truncated continued fractions $f(0,\nu)^{2k}$ and $g(0,\nu)^{2k}$. They satisfy the following properties.

(P) $f(0,\nu)^{2k}$ and $g(0,\nu)^{2k}$ are ratios of polynomials in the variable $\nu$ given by formulas \eqref{f2k} and \eqref{g2k}, where the coefficients are all positive. Moreover, the following facts hold.

(1) $f(0,\nu)^{2k}$ and $g(0,\nu)^{2k}$ are non-negative, continuous and differentiable functions in the interval $[0,+\infty)$. 

(2) There exist limits \[\lim_{\nu \to 0^{+}}f(0,\nu)^{2k} = \lim_{\nu \to 0^{+}}g(0,\nu)^{2k}=0\] and \[\lim_{\nu \to +\infty}f(0,\nu)^{2k}=\lim_{\nu \to +\infty}g(0,\nu)^{2k}=0.\]

(3) The following formulae hold: \[f'(0,\nu)^{2k}|_{\nu=0}= b_{2}+b_{4}+\ldots b_{2k}\] and \[g'(0,\nu)^{2k}|_{\nu=0}= b_{-2}+b_{-4}+\ldots b_{-2k}.\]
\end{lemma}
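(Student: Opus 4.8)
The plan is to reduce everything to the structural assertion (P), since once the explicit rational form of the even truncations is known, parts (1)--(3) are immediate from elementary calculus. I would begin by recording the one geometric input: for a point $\mathbf q$ of type $I_0$ one has $\|\mathbf q+n\mathbf p\|>\|\mathbf p\|$ for every $n\neq 0$, so by \eqref{bn} each $b_n$ with $n\neq0$ is a well-defined \emph{positive} number, and this positivity is what makes the whole argument work. I would then express the truncations through the classical convergent recursion for the continued fraction $[\nu b_1;\nu b_2;\ldots]$: setting $A_0=0$, $A_1=1$, $B_0=1$, $B_1=\nu b_1$ and
\begin{equation*}
A_n=\nu b_n A_{n-1}+A_{n-2},\qquad B_n=\nu b_n B_{n-1}+B_{n-2},
\end{equation*}
one has $f(0,\nu)^k=A_k/B_k$, so in particular $f(0,\nu)^{2k}=A_{2k}/B_{2k}$. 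The analogous recursion with $b_n$ replaced by $b_{-n}$ produces $g(0,\nu)^{2k}$, so it suffices to treat $f$; a check against \eqref{f2} and \eqref{f4} confirms $A_2/B_2$ and $A_4/B_4$ reproduce those formulas.

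The heart of the proof is the following claim, which I would establish by induction on $n$: $A_n$ is a polynomial in $\nu$ of degree $n-1$ containing only powers $\nu^j$ with $j\equiv n-1\pmod 2$, all of whose coefficients are strictly positive, while $B_n$ has degree $n$, contains only powers $\nu^j$ with $j\equiv n\pmod2$, has strictly positive coefficients, and has constant term $1$. The base cases $A_1=1$, $A_2=b_2\nu$, $B_0=1$, $B_1=\nu b_1$, $B_2=1+b_1b_2\nu^2$ are immediate. In the inductive step, multiplication by $\nu b_n$ shifts degrees up by one and preserves positivity (as $b_n>0$), while adding $A_{n-2}$ (resp. $B_{n-2}$) stays within the same parity class; the point is that the degree ranges of $\nu b_n A_{n-1}$ and of $A_{n-2}$ together cover \emph{every} admissible power, so no coefficient can vanish. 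Concretely, for even $n$ the shifted $A_{n-1}$ already hits all degrees $\{1,3,\dots,n-1\}$, and for odd $n$ the union $\{0,2,\dots,n-3\}\cup\{2,4,\dots,n-1\}$ is exhaustive. Tracking the two lowest coefficients separately gives the normalizations needed for \eqref{f2k}: the constant term of $A_n$ is $1$ for odd $n$, whence the $\nu$-coefficient $\alpha_n$ of $A_n$ (even $n$) satisfies $\alpha_n=b_n+\alpha_{n-2}$ with $\alpha_2=b_2$, so $\alpha_{2k}=b_2+b_4+\cdots+b_{2k}$, and the constant term of $B_{2k}$ is $1$. This is exactly the shape claimed in \eqref{f2k} (and \eqref{g2k}), proving (P).

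Granting (P), the remaining items are routine. Since the denominator in \eqref{f2k} has constant term $1$ and all remaining coefficients positive, it is $\geq 1$ on $[0,\infty)$; the numerator has non-negative coefficients, so $f(0,\nu)^{2k}$ is a ratio of polynomials with strictly positive denominator, hence non-negative, continuous, and differentiable on $[0,\infty)$, giving (1). For (2), the numerator has no constant term and the denominator tends to $1$, so the ratio tends to $0$ as $\nu\to 0^+$; and since $\deg(\text{numerator})=2k-1<2k=\deg(\text{denominator})$, the ratio tends to $0$ as $\nu\to+\infty$. For (3), writing $f(0,\nu)^{2k}=N(\nu)/D(\nu)$ with $N(0)=0$, $N'(0)=b_2+b_4+\cdots+b_{2k}$, $D(0)=1$, and $D'(0)=0$ (the denominator being even in $\nu$), the quotient rule yields
\begin{equation*}
\frac{d}{d\nu}f(0,\nu)^{2k}\Big|_{\nu=0}=\frac{N'(0)}{D(0)}=b_2+b_4+\cdots+b_{2k},
\end{equation*}
and identically for $g$.

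The only genuine obstacle is the strict positivity of all intermediate coefficients in the induction; the limits and the derivative computation drop out automatically from the rational form. I would therefore carry out the degree-coverage bookkeeping carefully, distinguishing the parities of $n$, since that is precisely where one verifies that no coefficient accidentally cancels, and it is exactly here that the positivity of the $b_n$ coming from the type $I_0$ geometry is used.
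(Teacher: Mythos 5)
Your proof is correct, but it follows a genuinely different route from the paper's. The paper proves property (P) by induction on $k$ applied directly to the even truncations: it writes $f(0,\nu)^{2k+2}$ as a two-level continued fraction whose innermost block is the tail $h_{2k}(0,\nu)=[\nu b_3;\nu b_4;\ldots;\nu b_{2k+2}]$, applies the induction hypothesis to that (index-shifted) tail, and then substitutes its rational form into the outer two levels to read off the new numerator and denominator. You instead invoke the classical three-term recursions $A_n=\nu b_nA_{n-1}+A_{n-2}$, $B_n=\nu b_nB_{n-1}+B_{n-2}$ for the convergents and induct on $n$, tracking degree, parity of exponents, strict positivity of coefficients, and the two lowest-order terms. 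Your route has two advantages: the induction hypothesis always refers to the same sequence $(b_n)$ (whereas the paper's step implicitly requires the statement to hold for the shifted sequence $b_3,b_4,\ldots$, i.e., for arbitrary positive sequences, a point the paper glosses over), and the parity/support bookkeeping makes it transparent why no coefficient can vanish, since one is only ever adding polynomials with positive coefficients whose supports jointly cover every admissible power. The paper's route is more economical in that it never introduces the odd convergents or the $A_n,B_n$ apparatus and manipulates only the even truncations it actually needs. One small imprecision to fix in your write-up: the blanket assertion that $B_n$ ``has constant term $1$'' should be restricted to even $n$ (for odd $n$ the polynomial $B_n$ contains only odd powers, so its constant term is $0$), just as you correctly restrict the constant-term-$1$ claim for $A_n$ to odd $n$; with that adjustment the induction closes exactly as you describe, and your derivations of (1)--(3) from (P) coincide with the paper's.
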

\begin{proof}
We prove the facts for $f(0,\nu)^{2k}$, those for $g(0,\nu)^{2k}$ are similar. The proof of Property (P) is by induction on $k$. When $k=1$, the result holds for $f(0,\nu)^{2}$ by inspecting formula \eqref{f2}. Suppose the result holds for $k> 1$. We need to show that the result holds for $k+1$. Consider the truncation $f(0,\nu)^{2k+2}$. This is given by the formula
\begin{equation}\label{f2k+2}
f(0,\nu)^{2k+2}=\cfrac{1}{\nu b_{1}+\cfrac{1}{\nu b_{2}+h_{2k}(0,\nu)}},
\end{equation} 
where $h_{2k}(0,\nu)$ is given by the following formula (recall Notation \ref{cf-notation})
\begin{equation}
h_{2k}(0,\nu)= [\nu b_{3};\nu b_{4};\nu b_{5};\ldots;\nu b_{2k+2}].
\end{equation}
By the induction hypothesis, $h_{2k}(0,\nu)$ is a polynomial of order $2k$ and is given by
\begin{equation}\label{h2k}
h_{2k}(0,\nu)= \frac{(b_{4}+b_{6}+\ldots +b_{2k+2})\nu+p_{3}\nu^{3}+p_{5}\nu^{5}+\ldots+p_{2k-1}\nu^{2k-1}}{1+q_{2}\nu^{2}+q_{4}\nu^{4}+\ldots+q_{2k}\nu^{2k}},
\end{equation}
where the coefficients are all positive. Denote the numerator and denominator in \eqref{h2k} by $n_{2k}(\nu)$ and $d_{2k}(\nu)$ respectively, and plugging \eqref{h2k} into \eqref{f2k+2} we obtain
\begin{equation}\label{f2k+2new}
f(0,\nu)^{2k+2}=\frac{b_{2}\nu d_{2k}(\nu)+n_{2k}(\nu)}{b_{1}b_{2}\nu^{2}d_{2k}(\nu)+b_{1}\nu n_{2k}(\nu)+d_{2k}(\nu)}.
\end{equation}
The numerator and denominator of \eqref{f2k+2new} are given by the following formulas
\begin{align*}
&b_{2}\nu(1+q_{2}\nu^{2}+q_{4}\nu^{4}+\ldots+q_{2k}\nu^{2k})+(b_{4}+b_{6}+\ldots +b_{2k+2})\nu\\&+p_{3}\nu^{3}+p_{5}\nu^{5}+\ldots+p_{2k-1}\nu^{2k-1} \numberthis \label{nr}
\end{align*} 
and
\begin{align*}
&(1+b_{1}b_{2}\nu^{2})(1+q_{2}\nu^{2}+q_{4}\nu^{4}+\ldots+q_{2k}\nu^{2k})+\\&b_{1}\nu ((b_{4}+b_{6}+\ldots +b_{2k+2})\nu+p_{3}\nu^{3}+p_{5}\nu^{5}+\ldots+p_{2k-1}\nu^{2k-1}) \numberthis \label{dr}.
\end{align*}
Expanding the numerator in \eqref{nr}, we observe that the coefficient of $\nu$ is given by $b_{2}+b_{4}+\ldots b_{2k+2}$ and thus the formula for \eqref{f2k+2new} has the same form as \eqref{f2k}. Property (P) thus holds for $k+1$ and is thus proven. Properties (1), (2) directly follow from the polynomial formula in equation \eqref{f2k}. Using the formulae \eqref{f2k} and \eqref{g2k} and the fact that $f(0,0)^{2k}=g(0,0)^{2k=0}$, the derivative formula $f'(0,\nu)^{2k}=\lim_{\nu \to 0^{+}}\frac{f(0,\nu)-f(0,0)}{\nu}$ yields the formulae in (3).
\end{proof}
By Remark \ref{root}, equation \eqref{fg}, 
\begin{equation}\label{fg-2}
a_{0}(\lambda, \nu)+f(\lambda, \nu)+g(\lambda, \nu)=0,
\end{equation}
has a positive root $\lambda > 0$ for those values of $\nu$ for which the following inequality holds
\begin{equation}\label{ineqf0g0}
f(0,\nu)+g(0,\nu) > -a_{0}(0,\nu)= -\frac{\nu\|\mathbf q\|^{2}}{\rho_{0}}.
\end{equation}
In the Lemma below, we use the following fact from one variable real analysis that was used in \cite{FSV97}, see page 208 therein. Let $f_{1}:[0,+\infty)$ be at least continuously differentiable, positive on $(0,+\infty)$ with $f_{1}(0)=0$ and let $f_{2}(x)=ax$ be the straight line with positive slope $a$. If $f_{1}'(0) > a$ (the derivative taken from the right), then there exists a $\delta > 0$ such that $f_{1}(x)> f_{2}(x)=ax$ for all $x$ in the interval $(0,\delta)$. %
\begin{lemma}\label{nu0}
There exists a $\nu_{0}>0$ such that for all $\nu$ in the interval $(0,\nu_{0})$ the inequality in equation \eqref{ineqf0g0} is satisfied.
\end{lemma}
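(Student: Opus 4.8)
The plan is to bound $f(0,\nu)+g(0,\nu)$ from below by a single even truncation and then to compare the resulting $C^1$ curve against the line $\nu\mapsto-\frac{\|\mathbf q\|^{2}}{\rho_{0}}\nu$ using the elementary real-analysis fact recalled just before the statement. Write $a:=-\|\mathbf q\|^{2}/\rho_{0}$, which is strictly positive since $\rho_{0}<0$ (Remark \ref{rho1}); thus the right-hand side of \eqref{ineqf0g0} is exactly the line $a\nu$. By the sandwiching inequality \eqref{fconv} and its analogue for $g(0,\nu)$, every even truncation lies below the full continued fraction, so $f(0,\nu)\ge f(0,\nu)^{2k}$ and $g(0,\nu)\ge g(0,\nu)^{2k}$ for all $\nu\ge0$ and all $k\ge1$. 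Hence it suffices to produce a single index $k$ for which $F_{k}(\nu):=f(0,\nu)^{2k}+g(0,\nu)^{2k}$ satisfies $F_{k}(\nu)>a\nu$ on some interval $(0,\nu_{0})$.

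Next I would record the inputs from Lemma \ref{even-trunc-properties}. By Items (1)--(2), $F_{k}$ is nonnegative and $C^1$ on $[0,\infty)$ with $F_{k}(0)=0$; moreover the explicit formulas \eqref{f2k} and \eqref{g2k} have strictly positive coefficients, so $F_{k}(\nu)>0$ for every $\nu>0$. By Item (3),
\[
F_{k}'(0)=\sum_{j=1}^{k}\big(b_{2j}+b_{-2j}\big).
\]
The key observation is that this derivative can be made arbitrarily large. Indeed, from \eqref{bn} and Remark \ref{rho1} each $b_{n}>0$ for $n\neq0$, and since $\|\mathbf q+n\mathbf p\|^{2}\sim n^{2}\|\mathbf p\|^{2}$ as $n\to\pm\infty$, one has $b_{n}\sim n^{2}\|\mathbf p\|^{2}\to\infty$. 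Consequently the partial sums $\sum_{j=1}^{k}(b_{2j}+b_{-2j})$ diverge as $k\to\infty$, and I may fix an integer $k$ large enough that $F_{k}'(0)>a$.

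Finally I would apply the cited real-analysis fact with $f_{1}=F_{k}$ and $f_{2}(\nu)=a\nu$: since $F_{k}$ is $C^1$, positive on $(0,\infty)$, $F_{k}(0)=0$, and $F_{k}'(0)>a$, there exists $\nu_{0}>0$ with $F_{k}(\nu)>a\nu$ for all $\nu\in(0,\nu_{0})$. Combining this with the lower bounds of the first paragraph gives, for every $\nu\in(0,\nu_{0})$,
\[
f(0,\nu)+g(0,\nu)\ge f(0,\nu)^{2k}+g(0,\nu)^{2k}=F_{k}(\nu)>a\nu=-\frac{\nu\|\mathbf q\|^{2}}{\rho_{0}},
\]
which is precisely \eqref{ineqf0g0}, and $\nu_{0}$ is the desired constant. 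The only genuinely substantive step is the divergence $F_{k}'(0)\to\infty$, which rests on the growth $b_{n}\to\infty$; everything else is the monotone sandwiching of the continued fractions together with the elementary slope comparison. I therefore expect no real obstacle beyond carefully verifying the asymptotics of $b_{n}$ and the strict positivity of the truncations $F_{k}(\nu)$ for $\nu>0$.
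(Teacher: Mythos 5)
Your proof is correct and follows essentially the same route as the paper: bound $f(0,\nu)+g(0,\nu)$ below by the even truncations, use Lemma \ref{even-trunc-properties} for their regularity, vanishing at $\nu=0$, and slope formula, note that the slope $\sum_{j=1}^{k}(b_{2j}+b_{-2j})$ exceeds $-\|\mathbf q\|^{2}/\rho_{0}$ for large $k$ because $b_{2k}$ grows quadratically, and conclude via the elementary slope-comparison fact stated before the lemma. The only cosmetic difference is that you invoke that real-analysis fact explicitly and verify strict positivity of the truncations from the positive coefficients, steps the paper leaves implicit.
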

\begin{proof}
It follows by standard facts from the theory of continued fractions that $f(0,\nu)$ and $g(0,\nu)$ are bounded below by the even truncations $f(0,\nu)^{2k}$ and $g(0,\nu)^{2k}$ for every $k>0$. It thus suffices to check that there exists a (finite) $k>0$ and a $\nu_{0}=\nu_{0}(k)$ such that for all $\nu$ in the interval $(0,\nu_{0})$ the inequality 
\begin{equation}\label{f0g0a0}
f(0,\nu)^{2k}+g(0,\nu)^{2k} > -a_{0}(0,\nu)
\end{equation}
holds. As a function of $\nu$, $-a_{0}(0,\nu)$ is a straight line with positive slope $-\frac{\|\mathbf q\|^{2}}{\rho_{0}}$. In particular $-a_{0}(0,\nu) \to +\infty$ as $\nu \to +\infty$. By Lemma \ref{even-trunc-properties} we know that for each fixed $k$, $f(0,\nu)^{2k}$ and $g(0,\nu)^{2k}$ are continuous, positive on $(0,+\infty)$ with limits $\lim_{\nu \to 0^{+}} f(0,\nu)^{2k} = \lim_{\nu \to 0^{+}} g(0,\nu)^{2k} =0$ and $\lim_{\nu \to +\infty} f(0,\nu)^{2k}=\lim_{\nu \to +\infty} g(0,\nu)^{2k}=0$. Moreover, the slopes at $0$ satisfy the formula
\begin{equation}
f'(0,\nu)^{2k}|_{\nu=0}= b_{2}+b_{4}+\ldots b_{2k}, \quad g'(0,\nu)^{2k}|_{\nu=0}= b_{-2}+b_{-4}+\ldots b_{-2k}.
\end{equation} 
If we can show that the slope of $f'(0,\nu)^{2k}|_{\nu=0}+g'(0,\nu)^{2k}|_{\nu=0}$ is greater than the slope $-\frac{\|\mathbf q\|^{2}}{\rho_{0}}$ of the line $-a_{0}(0,\nu)$, then, since both these functions are continuous, it follows, since $\lim_{\nu \to +\infty} f(0,\nu)^{2k}=0$, that there exists a $\nu_{0} > 0$ such that $f(0,\nu)^{2k}+g(0,\nu)^{2k}> -a_{0}(0,\nu)$ for all $\nu \in (0,\nu_{0})$. But the slopes of $f'(0,\nu)^{2k}|_{\nu=0}$ and $g'(0,\nu)^{2k}|_{\nu=0}$ are given by $b_{2}+b_{4}+\ldots b_{2k}$ and $b_{-2}+b_{-4}+\ldots b_{-2k}$. Therefore, we need to show that there exists a $k>0$ such that the inequality 
\begin{equation}\label{ineqb2k}
b_{2}+b_{4}+\ldots b_{2k}+b_{-2}+b_{-4}+\ldots b_{-2k} > -\frac{\|\mathbf q\|^{2}}{\rho_{0}}
\end{equation} 
holds. Recall the formula for $b_{n}$
\[b_{n}= \frac{ \|\mathbf q+n\mathbf p\|^{4}}{\|\mathbf q+n\mathbf p\|^{2}-\|\mathbf p\|^{2}}.\]
The right hand side of the inequality \eqref{ineqb2k} is a constant whereas the left hand side is at least as large as $b_{2k}$ which itself grows like $4k^{2}$. This means that for sufficiently large $k$, the inequality \eqref{ineqb2k} holds which then implies that inequality \eqref{f0g0a0} holds and thus completes the proof.
\end{proof}

\begin{lemma}\label{root}
If $\mathbf q$ is of type $I_{0}$, then there exists a $\nu_{0}>0$ such that if $\nu \in (0,\nu_{0})$ the equation \eqref{fg} has at least one solution $\lambda > 0$.
\end{lemma}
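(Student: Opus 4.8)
The plan is to prove the Lemma by a direct application of the intermediate value theorem to the function
\[
\Phi(\lambda):=a_{0}(\lambda,\nu)+f(\lambda,\nu)+g(\lambda,\nu),\qquad \lambda>0,
\]
whose positive zeros are exactly the positive solutions of \eqref{fg}. Since $\bq$ is of type $I_{0}$, all the analytic ingredients needed for this argument are already contained in the preceding lemmas, so the proof amounts to assembling them and inspecting the two boundary regimes $\lambda\to 0^{+}$ and $\lambda\to+\infty$.

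First I would record that $\Phi$ is continuous on $(0,+\infty)$: the summand $a_{0}(\lambda,\nu)=(\lambda+\nu\|\bq\|^{2})/\rho_{0}$ is affine in $\lambda$, while $f(\cdot,\nu)$ and $g(\cdot,\nu)$ are continuous by Lemma \ref{ulemma}(1). Next I would analyze the limit $\lambda\to+\infty$. Because $\bq$ is of type $I_{0}$ we have $\rho_{n}>0$ for every $n\neq 0$ (Remark \ref{rho1}), so each $a_{n}(\lambda,\nu)=(\lambda+\nu c_{n})/\rho_{n}\to+\infty$ as $\lambda\to+\infty$ for $n\neq 0$; consequently the continued fractions $f(\lambda,\nu)=[a_{1};a_{2};\dots]$ and $g(\lambda,\nu)=[a_{-1};a_{-2};\dots]$ are positive and tend to $0$. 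On the other hand $\rho_{0}<0$ forces $a_{0}(\lambda,\nu)\to-\infty$, whence $\Phi(\lambda)\to-\infty$; thus $\Phi$ is eventually strictly negative.

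The crux is the behavior as $\lambda\to 0^{+}$. By Lemma \ref{ff0} we have $f(\lambda,\nu)\to f(0,\nu)$ and $g(\lambda,\nu)\to g(0,\nu)$, and since $a_{0}(\lambda,\nu)\to a_{0}(0,\nu)=\nu\|\bq\|^{2}/\rho_{0}$, the one-sided limit $\Phi(0^{+}):=\lim_{\lambda\to 0^{+}}\Phi(\lambda)=a_{0}(0,\nu)+f(0,\nu)+g(0,\nu)$ exists. Invoking Lemma \ref{nu0}, I would fix $\nu_{0}>0$ so that for every $\nu\in(0,\nu_{0})$ the inequality \eqref{ineqf0g0}, namely $f(0,\nu)+g(0,\nu)>-a_{0}(0,\nu)$, holds; adding $a_{0}(0,\nu)$ to both sides this is precisely the statement $\Phi(0^{+})>0$. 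Having $\Phi(0^{+})>0$ together with $\Phi(\lambda)\to-\infty$, continuity and the intermediate value theorem produce a point $\lambda^{*}>0$ with $\Phi(\lambda^{*})=0$, i.e.\ a positive solution of \eqref{fg}, which completes the proof.

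I do not expect a genuine obstacle at this stage: the substantive content — the finiteness and positivity of the limits $f(0,\nu),g(0,\nu)$, and especially the slope comparison securing \eqref{ineqf0g0} for small $\nu$ through the growth $b_{2k}\sim 4k^{2}$ — has been discharged in Lemmas \ref{ff0}, \ref{even-trunc-properties} and \ref{nu0}. The only mild care required is in the $\lambda\to+\infty$ analysis, where one should confirm that $f$ and $g$ actually \emph{vanish} in the limit rather than merely remaining bounded; this is immediate because the leading partial denominator $a_{1}$ (respectively $a_{-1}$) alone blows up, forcing the entire continued fraction to $0$.
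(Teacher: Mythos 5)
Your proposal is correct and follows essentially the same route as the paper's own proof: both rest on continuity of $f(\cdot,\nu)$ and $g(\cdot,\nu)$, the decay $0<f+g<1/a_{1}+1/a_{-1}\to 0$ as $\lambda\to+\infty$, the limit $f(\lambda,\nu)+g(\lambda,\nu)\to f(0,\nu)+g(0,\nu)$ from Lemma \ref{ff0}, and the small-$\nu$ inequality \eqref{ineqf0g0} from Lemma \ref{nu0}. The only cosmetic difference is that you apply the intermediate value theorem to $\Phi=a_{0}+f+g$, whereas the paper phrases the same argument as the intersection of the curve $f+g$ with the line $-a_{0}$.
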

\begin{proof}
The continued fractions $f(\lambda,\nu)$ and $g(\lambda,\nu)$ given, respectively, by equations \eqref{f} and \eqref{g} are continuous functions in $\lambda$. This follows from the Van Vleck and the Stieltjes-Vitali theorems, see \cite[Theorem 4.29 and 4.30]{JT} which guarantee, in particular, that the continued fractions $f$ and $g$ converge and are continuous for all $\lambda > 0$.  
$f(\lambda, \nu)$ and $g(\lambda, \nu)$  are both non-negative and bounded above, respectively, by $1/ a_{1}(\lambda,\nu)$ and $1/a_{-1}(\lambda, \nu)$ and thus satisfy the inequalities $0< f(\lambda, \nu) +g(\lambda, \nu)< \frac{1}{a_{1}(\lambda,\nu)}+\frac{1}{a_{-1}(\lambda,\nu)}$. Since $\lim_{\lambda \to +\infty} \frac{1}{a_{1}(\lambda,\nu)}=\frac{1}{a_{-1}(\lambda,\nu)}=0$ we have that $f(\lambda, \nu) $ and $g(\lambda, \nu)$ go to $0$ as $\lambda \to \infty$. By Lemma \ref{ff0}, $f(\lambda,\nu)+g(\lambda,\nu)$ converges to $f(0,\nu)+g(0,\nu)$ as $\lambda \to 0$.

Since $\mathbf q$ is of type $I_{0}$, $\rho_{0}< 0$ and thus $-a_{0}(\lambda,\nu)=\frac{-\lambda-\nu \|\mathbf q\|^{2}}{\rho_{0}}$ considered as a function of $\lambda$, is a straight line of positive slope whose $y$ intercept is the point $-\frac{\nu\|\mathbf q\|^{2}}{\rho_{0}}$. Lemma \ref{nu0} guarantees the inequality $f(0,\nu)+g(0,\nu) > -a_{0}(0,\nu)= -\frac{\nu\|\mathbf q\|^{2}}{\rho_{0}}$ for all $\nu \in (0,\nu_{0})$ which then means that for all $\nu$ in the interval $(0,\nu_{0})$, the continuous curves $f(\lambda,\nu)+g(\lambda,\nu)$ and $-a_{0}(\lambda,\nu)$ must intersect each other at some positive $\lambda>0$.
\end{proof}

Starting from the existence of a positive root $\lambda >0$ to the continued fractions equation \eqref{fg}, the eigenfunction sequence $(w_{n})$ can be constructed as follows. Define first the continued fractions $u_{n}^{(1)}(\nu,\lambda)$ and $u_{n}^{(2)}(\nu,\lambda)$ using equations \eqref{un1} and \eqref{un2}. Let $u_{n}=u_{n}^{(1)}(\nu,\lambda)$ for $n \geq 0$ and $u_{n}=u_{n}^{(2)}(\nu,\lambda)$ for $n \leq 0$, equality holds at $n=0$ because $\lambda>0$ solves equation \eqref{fg}. Let $(z_{n})$ be the sequence that satisfies $u_{n}=z_{n-1}/z_{n}$ for all $n \in \mathbb Z$. Fixing $z_{0}=1$, a calculation then shows that for $n > 0$, $z_{n}$ satisfies
\begin{equation}\label{zn>0}
z_{n}=\frac{z_{0}}{u_{1}u_{2}\ldots u_{n}}, \quad n > 0,
\end{equation} 
and for each $-n <0$, we have, 
\begin{equation}\label{zn<0}
z_{-n}=z_{0}u_{0}u_{-1}u_{-2}\ldots u_{-n+1}, \quad n > 0.
\end{equation}
$(w_{n})$ is then obtained from the sequence $(z_{n})$ defined above using the formula $w_{n}=z_{n}/\rho_{n}$ for all $n \in \mathbb Z$. The sequence $(w_{n})$ thus constructed is an exponentially decaying sequence, see Lemma \ref{eigvectprop} below. One thus obtains instability of flows of type $I_{0}$ for those positive values $\nu$  in the interval $(0,\nu_{0})$ such that inequality \eqref{ineqf0g0} is satisfied. 

The existence of a solution $\lambda > 0$ to the continued fractions equation \eqref{fg} is thus both sufficient and necessary for $\lambda$ to be an eigenvalue of the operator $L_{B,\bq}$. By the formulae presented in the previous paragraph, the eigensequence $(w_{n})$ is related to the sequence $u_{n}$ which in turn is given by the continued fractions expressions $u_{n}^{(1)}(\lambda, \nu)$ and $u_{n}^{(2)}(\lambda, \nu)$. By construction, $u_{n}^{(1)}(\lambda, \nu)>0$ for $n >0$ and $u_{n}^{(2)}(\lambda, \nu)<0$ for $n \leq 0$. The sequence $(w_n)$ thus possesses certain additional properties. The sign of $w_{n}$ should be such that $u_{n} > 0$ for $n \geq 1$ and $u_{n} < 0$ for $n \leq 0$. Using the formulas $z_{n}=\rho_{n}w_{n}$ and $u_{n}=z_{n-1}/z_{n}$ (and the sign of $\rho_{n}$) one can check directly that the sequence $(w_{n})$ satisfies the following property.
\begin{property}\label{eigvectorions}
If $\mathbf q$ is of type $I_{0}$, the eigenvector $(w_{n})$ of \eqref{evns} is such that the following holds: either $w_{n}>0$ for $n>0$, $w_{n}<0$ for $n=-1,0$, and $(-1)^{|n|}w_{n}>0$ for $n \leq -2$, or the entries of the vector $(-w_{n})$ satisfy the inequalities just listed.
\end{property}
In what follows $\ell_{s}^{2}(\bbZ)$ is the space of square summable sequences with the weight $(1+n^{2s})^{1/2}$.
\begin{lemma}\label{eigvectprop}
In case $\bq$ is of type $I_{0}$, consider the sequence $(w_{n})$ where $w_{n}=z_{n}/\rho_{n}$ and $z_{n}$'s are given by equations \eqref{zn>0} and \eqref{zn<0}. Then the eigenvector sequence $(w_{n})$ is an exponentially decaying sequence and therefore belongs to $\ell_{s}^{2}(\bbZ)$ for any $s \geq 0$.
\end{lemma}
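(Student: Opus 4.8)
The plan is to show exponential decay of $(w_n)$ at both ends $n\to\pm\infty$ by combining the growth/decay estimates for the products of the $u_n$'s from Lemma \ref{ulemma} Item (3) with control on the weights $\rho_n$. Recall $w_n=z_n/\rho_n$, and the $z_n$ are given by \eqref{zn>0}--\eqref{zn<0} as telescoping products of the $u_n$. Since $\bq$ is of type $I_0$, by Remark \ref{rho1} we have $\rho_0<0$ and $\rho_n>0$ for $n\neq0$, and from \eqref{rhon}, $\rho_n=1-\|\bp\|^2/\|\bq+n\bp\|^2\to 1$ as $n\to\pm\infty$; in particular $\rho_n$ is bounded above by $1$ and bounded away from $0$ for $|n|$ large. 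Thus dividing $z_n$ by $\rho_n$ affects the decay only by a bounded factor for large $|n|$ and cannot spoil exponential decay.

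First I would treat the tail $n>0$. By \eqref{zn>0}, $|z_n|=|z_0|\,(|u_1u_2\cdots u_n|)^{-1}$, and by estimate \eqref{u1ineq} there exist $0<q<1$ and $C>0$ with $(|u_1^{(1)}\cdots u_n^{(1)}|)^{-1}\le Cq^n$ for all $n\ge0$. Since $u_n=u_n^{(1)}(\lambda,\nu)$ for $n\ge0$, this gives $|z_n|\le C|z_0|\,q^n$, and dividing by $\rho_n$ (bounded below for large $n$) yields $|w_n|\le C' q^n$. Next I would treat the tail $n<0$. By \eqref{zn<0}, $|z_{-n}|=|z_0|\,|u_0u_{-1}\cdots u_{-n+1}|$, and by \eqref{u2ineq}, writing $m=-n$, the product $|u_0^{(2)}u_{-1}^{(2)}\cdots u_{m}^{(2)}|\le Cq^{-m}=Cq^{|m|}$ for $m\le-1$; since $u_n=u_n^{(2)}(\lambda,\nu)$ for $n\le 0$, this again produces an exponentially small bound $|z_{-n}|\le C|z_0|q^{n}$, hence $|w_{-n}|\le C' q^n$ after dividing by $\rho_{-n}\to1$.

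Having obtained $|w_n|\le C' q^{|n|}$ for all $n$ with $0<q<1$, the sequence decays exponentially. For membership in $\ell_s^2(\bbZ)$ for any $s\ge0$, I would simply note that $\sum_{n\in\bbZ}(1+n^{2s})|w_n|^2\le C''\sum_{n\in\bbZ}(1+n^{2s})q^{2|n|}<\infty$, since polynomial weights are dominated by any geometric decay; this is a convergent series by the ratio test.

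The only point requiring care — and the step I expect to be the main obstacle — is matching the index conventions between Lemma \ref{ulemma}(3) and the product formulas \eqref{zn>0}--\eqref{zn<0}, and verifying that the constant factors from the finitely many small indices (where $u_n$ is not yet close to its limiting size, and where $\rho_n$ could be small, e.g.\ $\rho_0<0$) are harmless. These contribute only a fixed multiplicative constant absorbed into $C'$, so they do not affect the exponential rate; the weights $\rho_n$ for $|n|$ small form a finite set of nonzero numbers and hence are bounded away from $0$ on that finite range.
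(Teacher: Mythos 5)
Your proposal is correct and follows essentially the same route as the paper: exponential decay of $(z_n)$ via the product formulas \eqref{zn>0}--\eqref{zn<0} combined with the estimates \eqref{u1ineq}--\eqref{u2ineq} of Lemma \ref{ulemma}(3), then passage to $(w_n)=(z_n/\rho_n)$ and summability of $\sum_n(1+n^{2s})|w_n|^2$. If anything, your treatment of the division by $\rho_n$ is slightly more careful than the paper's (you correctly note one needs $\rho_n$ bounded \emph{away from zero}, not merely bounded), but this is a refinement of the same argument, not a different one.
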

\begin{proof}
It is sufficient to show that $(z_{n})$ is an exponentially decaying sequence since $w_{n}=z_{n}/\rho_{n}$ and $(\rho_{n})$ is a bounded sequence (recall that $\rho_{n} \to 1$ as $n \to \pm \infty$). Consider formulas \eqref{zn>0} and \eqref{zn<0} for $z_{n}$
\begin{equation}\label{zn>0proof}
z_{n}=\frac{z_{0}}{u_{1}u_{2}\ldots u_{n}}, \quad n > 0,
\end{equation} 
\begin{equation}\label{zn<0proof}
z_{-n}=z_{0}u_{0}u_{-1}u_{-2}\ldots u_{-n+1}, \quad n > 0.
\end{equation}
Equations \eqref{u1ineq} and \eqref{u2ineq} in Item (3) of Lemma \ref{ulemma} then guarantee that for all $n \in \mathbb Z$, $z_{n}$ satisfies the following inequality, where $0< q < 1$ and $C$ is a constant
\begin{equation}\label{znineq}
|z_{n}| \leq Cq^{n}. 
\end{equation}
But $Cq^{n}=Ce^{n\log q}=Ce^{-n\delta}$, where $\delta=-\log q > 0$. Thus $z_{n}$ satisfies the inequality
\begin{equation}\label{znineq1}
|z_{n}| \leq Ce^{-n\delta}, \quad \delta >0. 
\end{equation}
Estimate \eqref{znineq1} implies that $\sum_{n \in \mathbb Z}(1+n^{2s})|z_{n}|^{2}$ is summable for all $s \geq 0$ or in other words, the sequence $(z_{n})$ is in $\ell_{s}^{2}(\mathbb Z)$ for all $s \geq 0$.
\end{proof}

\begin{theorem}\label{mainthm}
Suppose \eqref{uni} is a steady state solution to the 2D Navier-Stokes equations \eqref{ns} such that there exists at least one point 
$\mathbf q \in \mathcal Q(\mathbf p)$ of type $I_{0}$, where $\mathbf q$ is not parallel to $\mathbf p$. Also, we assume that $\Gamma \in \mathbb R$ and satisfies the normalization condition $\frac{\Gamma(\mathbf q\wedge\mathbf p)}{\|\bp\|^{2}}=1$. Then there exists a $\nu_{0}>0$ such that for all $\nu$ in the interval $(0,\nu_{0})$, the steady state $\omega^{0}$, $\mathbf u^{0}$ and $f$ defined in 
\eqref{uni} is linearly unstable. In particular, the operator $L_{B,\mathbf q}$ in the space $\ell^{2}_{s}(\mathbb Z)$ has a positive eigenvalue and therefore $L_{B}$ in $\ell_{s}^{2}(\bbZ^{2})$ has a positive eigenvalue. Moreover, the following assertion holds for all $\nu$ in the interval $(0,\nu_{0})$:
if $\mathbf q$ is of type $I_0$ then $\lambda > 0$ is an eigenvalue of $L_{B,\mathbf q}$ with eigenvector $(w_{n})$ satisfying Property \ref{eigvectorions}
if and only if $\lambda > 0$ is a solution to the equation
\begin{equation}\label{eqiov}
a_{0}(\lambda,\nu)+f(\lambda,\nu)+g(\lambda,\nu)=0.
\end{equation}
\end{theorem}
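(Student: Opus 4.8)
The plan is to assemble the preceding lemmas into the two halves of the stated biconditional and then read off the instability conclusion from the direct-sum decomposition $L_B=\oplus_{\mathbf q\in\mathcal Q}L_{B,\mathbf q}$. The existence of the threshold $\nu_0$ is exactly the content of Lemma \ref{root}: for $\nu\in(0,\nu_0)$ the continued-fractions equation \eqref{fg} has at least one root $\lambda>0$. Everything else consists of turning such a root into a genuine eigenvector, and conversely turning an eigenvector into a root, so that the positivity of an eigenvalue of $L_{B,\mathbf q}$ (hence of $L_B$) is established on $\ell^2_s$.

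First I would prove sufficiency ($\Leftarrow$), the constructive direction already outlined in the paragraph following Lemma \ref{root}. Given a root $\lambda>0$ of \eqref{fg}, define the two convergent continued fractions $u_n^{(1)}(\lambda,\nu)$ and $u_n^{(2)}(\lambda,\nu)$ from \eqref{un1} and \eqref{un2}, set $u_n=u_n^{(1)}$ for $n\ge 0$ and $u_n=u_n^{(2)}$ for $n\le 0$ (the two definitions agree at $n=0$ precisely because $\lambda$ solves \eqref{fg}, i.e.\ $u_0^{(1)}=u_0^{(2)}$ in \eqref{un12}), and build $(z_n)$ by \eqref{zn>0}--\eqref{zn<0} with $z_0=1$. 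The defining relation $u_n-1/u_{n+1}=a_n$ of the continued fractions is equivalent to $z_{n-1}-z_{n+1}=a_nz_n$, that is, to \eqref{znformula}; unwinding $z_n=\rho_n w_n$ and $a_n=(\lambda+\nu c_n)/\rho_n$ shows that $w_n=z_n/\rho_n$ solves the eigenvalue equation \eqref{evns}. Lemma \ref{eigvectprop} then gives $(w_n)\in\ell^2_s(\mathbb Z)$ for every $s\ge 0$, so $(w_n)$ is a bona fide eigenvector and $\lambda$ a bona fide eigenvalue; the signs $u_n^{(1)}>0$ for $n>0$ and $u_n^{(2)}<0$ for $n\le 0$, combined with the signs of $\rho_n$ from Remark \ref{rho1}, yield Property \ref{eigvectorions}.

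The necessity direction ($\Rightarrow$) is where the main work lies. Assume $\lambda>0$ is an eigenvalue with eigenvector $(w_n)$ satisfying Property \ref{eigvectorions}; this sign structure forces $w_n\ne 0$ for every $n$, so $z_n=\rho_n w_n\ne 0$ and the ratios $u_n=z_{n-1}/z_n$ are well defined and satisfy the recurrence \eqref{unformula1}. The delicate point is that the three-term recurrence $z_{n+1}=z_{n-1}-a_nz_n$ admits a one-parameter family of solutions, and I must show that the sequence coming from a genuine ($\ell^2$, hence decaying) eigenvector is the distinguished minimal one, namely that $u_n=u_n^{(1)}$ for $n\ge 0$ and $u_n=u_n^{(2)}$ for $n\le 0$. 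This is the selection effected by summability: since $(w_n)\in\ell^2$ one has $z_n\to 0$, which excludes the dominant branch and pins $u_n$ to the Van Vleck--convergent continued fraction, in the Pincherle-type manner carried out in \cite[pp.~2063--2064]{DLMVW20}. Once the branches are identified, matching the two one-sided representations of $u_0$ forces $u_0^{(1)}=u_0^{(2)}$, which is precisely \eqref{un12}, equivalently \eqref{fg}, so $\lambda$ is a root of \eqref{eqiov}. I expect this identification of the minimal solution --- justifying that the eigenvector's ratios coincide with the convergent continued fractions rather than with a spurious growing solution --- to be the principal obstacle; the limits in Lemma \ref{ulemma}, item (2), and the product bounds \eqref{u1ineq}--\eqref{u2ineq} are the quantitative inputs that make this rigorous.

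Finally I would collect the conclusions. Combining Lemma \ref{root} with the sufficiency direction shows that for $\nu\in(0,\nu_0)$ the operator $L_{B,\mathbf q}$ has a positive eigenvalue $\lambda>0$ with an exponentially decaying eigenvector in $\ell^2_s(\mathbb Z)$. Since $\ell^2(\mathbb Z^2)=\oplus_{\mathbf q\in\mathcal Q}X_{B,\mathbf q}$ and $L_B$ restricts to $L_{B,\mathbf q}$ on the invariant subspace $X_{B,\mathbf q}\cong\ell^2(\mathbb Z)$, this positive eigenvalue of $L_{B,\mathbf q}$ is a positive eigenvalue of $L_B$ on $\ell^2_s(\mathbb Z^2)$, giving exponential linear instability of the steady state \eqref{uni}. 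The two directions established above then furnish the claimed equivalence between eigenvalues of $L_{B,\mathbf q}$ with eigenvectors obeying Property \ref{eigvectorions} and roots $\lambda>0$ of \eqref{eqiov}, completing the proof.
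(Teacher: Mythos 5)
Your proposal is correct and follows essentially the same route as the paper's own proof: Lemma \ref{root} for the threshold $\nu_0$, the continued-fraction construction of $(z_n)$ and $(w_n)$ for sufficiency (with Lemma \ref{eigvectprop} for $\ell^2_s$ membership and the sign analysis for Property \ref{eigvectorions}), and the reverse identification $u_n=u_n^{(1)}$, $u_n=u_n^{(2)}$ for necessity, resting on the same argument from \cite[pp.~2063--2064]{DLMVW20} that the paper invokes. Your explicit flagging of the minimal-solution (Pincherle-type) selection as the delicate step in the necessity direction is a fair reading of what the paper treats tersely by citation, but it is the same proof.
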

\begin{proof}
Recall formulas for $f(\lambda,\nu)$ and $g(\lambda,\nu)$ given by equations \eqref{f} and \eqref{g}. Suppose $\mathbf q$ is of type $I_{0}$, Lemma \ref{root} guarantees that there exists a $\nu_{0}>0$ so that for all $\nu$ in the interval $(0,\nu_{0})$, equation \eqref{eqiov} has a positive root $\lambda > 0$. Consider the continued fractions $u_{n}^{(1)}(\lambda,\nu)$ for $n \geq 0$ and $u_{n}^{(2)}(\lambda,\nu)$ for $n \leq 0$, given respectively, by equations \eqref{un1} and \eqref{un2}. 
\begin{equation}\label{un1proof}
u_{n}^{(1)}(\lambda,\nu)= a_{n}+[a_{n+1};a_{n+2};\ldots],\, \quad n=0,1,2,\ldots,
\end{equation}
\begin{equation}\label{un2proof}
u_{n}^{(2)}(\lambda,\nu)= -[a_{n-1};a_{n-2};\ldots], \quad n=0,-1,-2,\ldots,
\end{equation}
where, we recall that $a_{n}$ is given by the formula
\begin{equation}\label{anproof}
a_{n}:=a_{n}(\lambda, \nu)= \frac{\lambda + \nu c_{n}}{ \rho_{n}} =\frac{\lambda + \nu \|\mathbf q + n \mathbf p\|^{2}}{\rho_{n}}.
\end{equation}
Notice that $u_{0}^{(1)}(\lambda,\nu)=a_{0}(\lambda,\nu)+f(\lambda,\nu) =-g(\lambda,\nu)=u_{0}^{(2)}(\lambda,\nu)$ because the equation \eqref{eqiov} has a root. Let $u_{n}=u_{n}^{(1)}(\lambda,\nu)$ for $n \geq 0$ and $u_{n}=u_{n}^{(2)}(\lambda,\nu)$ for $n < 0$. Using the continued fraction formulas \eqref{un1proof} and \eqref{un2proof} for $u_{n}$, one can check that the $u_{n}$ satisfies formulas \eqref{unformula1} and \eqref{unformula2}, that is, $u_{n}$ satisfies
\begin{equation}\label{unformulaproof}
u_{n}=a_{n}+\frac{1}{u_{n+1}}, \quad u_{n+1}=\frac{-1}{a_{n}-u_{n}}, \quad n \in \mathbb Z.
\end{equation}
Fix $z_{0}=1$ and consider the expressions for $z_{n}$ given by formulas \eqref{zn>0} and \eqref{zn<0}. 
\begin{equation}\label{zn>0proof}
z_{n}=\frac{z_{0}}{u_{1}u_{2}\ldots u_{n}}, \quad n > 0,
\end{equation} 
\begin{equation}\label{zn<0proof}
z_{-n}=z_{0}u_{0}u_{-1}u_{-2}\ldots u_{-n+1}, \quad n > 0.
\end{equation}
$z_{n}$ thus defined satisfies $u_{n}=z_{n-1}/z_{n}$ for every $n \in \mathbb Z$. Plugging this in \eqref{unformulaproof} we get that the sequence $z_{n}$ satisfies equation \eqref{znformula}
\begin{equation}\label{znformulaproof}
z_{n-1}-z_{n+1}= a_{n}z_{n}.
\end{equation}
If we now let $w_{n}=z_{n}/\rho_{n}$ for all $n \in \mathbb Z$, then one can check that the $w_{n}$ so constructed satisfies the eigenvalue equation \eqref{evns} 
\begin{equation}\label{evnsproof}
\rho_{n-1}w_{n-1}-\rho_{n+1}w_{n+1}=\lambda w_{n}+ \nu \|\mathbf q + n \mathbf p\|^{2}w_{n}.
\end{equation}
Using the fact that $u_{n}>0$ for $n >0$ and $u_{n}<0$ for $n \leq 0$ and the formulas for $z_{n}$ and $w_{n}$ constructed above, one can directly verify that $(w_{n})$ satisfies the properties listed in Property \ref{eigvectorions}. The fact that $(w_{n})$ is in the space $\ell^{2}_{s}(\mathbb Z)$ for all $s \geq 0$ is due to Lemma \ref{eigvectprop}.

Now, suppose $\lambda > 0$ is an eigenvalue of $L_{B,\mathbf q}$ with eigenvector $(w_{n})$ satisfying property \ref{eigvectorions}. Starting with the eigenvalue equation \eqref{evnsproof}, let $z_{n}=\rho_{n}w_{n}$, (recall formula \eqref{anproof}) to obtain equation \eqref{znformulaproof}. Notice that by property \ref{eigvectorions}, $w_{n} \neq 0$ for any $n \in \mathbb Z$ and thus $z_{n} \neq 0$ for any $n \in \mathbb Z$. Now if we let $u_{n}=z_{n-1}/z_{n}$, we see that $u_{n}$ satisfies \eqref{unformulaproof}. Iterating the first formula in \eqref{unformulaproof} forwards for $n \geq 0$ and the second formula in \eqref{unformulaproof} backwards for $n \leq 0$, we obtain expressions for the continued fractions $u_{n}^{(1)}(\lambda,\nu)$ and $u_{n}^{(2)}(\lambda,\nu)$ given, respectively, by equations \eqref{un1proof} and \eqref{un2proof}. These continued fractions match at $n=0$, i.e., we have $u_{0}^{(1)}(\lambda,\nu)=u_{0}^{(1)}(\lambda,\nu)$ for the given $\lambda > 0$. Since $u_{0}^{(1)}(\lambda,\nu)=a_{0}(\lambda,\nu)+f(\lambda,\nu)$ and $u_{0}^{(2)}(\lambda,\nu)=-g(\lambda,\nu)$, we see that \eqref{eqiov} has a positive root $\lambda > 0$.
\end{proof}
See Figure \ref{ENS} for a numerical illustration to our main theorem. The figure corresponds to $\mathbf p =(3,1)$ and $\mathbf q=(-1,2)$. The left panel is the Navier-Stokes case with viscosity $\nu=0.06$, the right panel is the Euler ($\nu=0$) case for comparison. The respective continued fractions have been approximated by their $10$-th truncations. We also wish to mention that a website to numerically compute and display roots of the equation \eqref{eqiov} for the Navier-Stokes (together with related equations for the Euler equations and the $\alpha$-Euler model) is currently under development by Aleksei Seletskiy, a high school student in Palo Alto, CA, U.\ S.\ A. This will eventually be available at https://thetazero.github.io/capstone/index.    
\begin{figure}[H]\label{ENS}
\begin{center}
\includegraphics[scale=.5,keepaspectratio,draft=false]{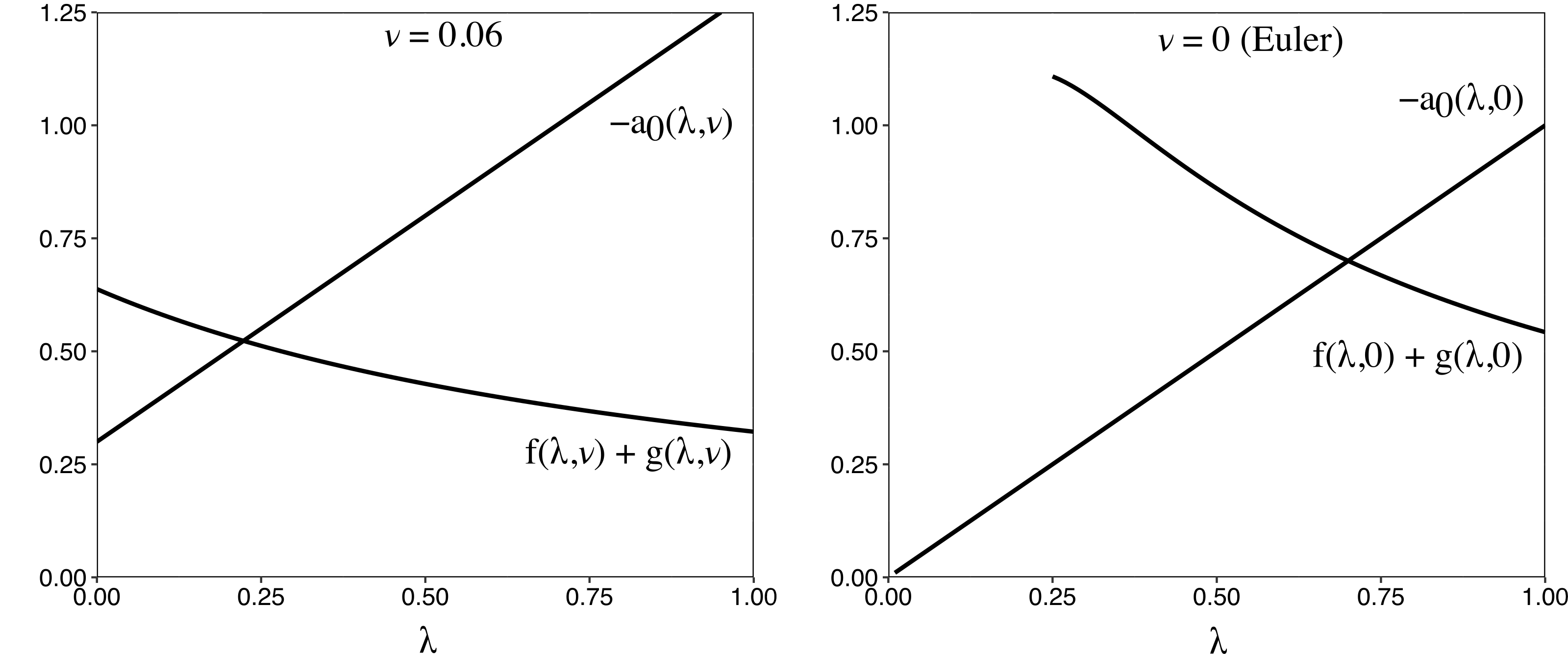}
\caption{Numerical illustration of roots of equation \ref{eqiov}. Here $\mathbf p=(3,1)$ and $\mathbf q=(-1,2)$. The left panel is the Navier-Stokes case with viscosity $\nu=0.06$, the right panel is the Euler ($\nu=0$) case for comparison. The respective continued fractions have been approximated by their $10$-th truncations.}
\end{center}
\end{figure}

\subsection{Modifications in case $\mathbf q$ is of type $I_{+}$ and $I_{-}$}
We outline changes to the results above in case $\mathbf q$ is of type $I_{+}$ and $I_{-}$. By Remark \ref{rho1}, if $\mathbf q$ is of type $I_{+}$ then $\rho_{1}=0$ and if $\mathbf q$ is of type $I_{-}$ then $\rho_{-1}=0$. Consider the eigenvalue problem \eqref{evns}
\begin{equation}\label{evnsip}
\rho_{n-1}w_{n-1}-\rho_{n+1}w_{n+1}=\lambda w_{n}+ \nu \|\mathbf q + n \mathbf p\|^{2}w_{n}.
\end{equation}
Recall formula $a_{n}:=a_{n}(\lambda, \nu)= \frac{\lambda + \nu \|\mathbf q + n \mathbf p\|^{2}}{\rho_{n}}$. $a_{1}$ and $a_{-1}$ are undefined, respectively, for $\mathbf q$ of type $I_{+}$ and $I_{-}$. 

Consider first the case $I_{+}$. Letting $z_{n}=\rho_{n}w_{n}$, note that $z_{1}=0$, reduces equation \eqref{evnsip} to
\begin{equation}\label{znip}
z_{n-1}-z_{n+1}=a_{n}(\lambda,\nu)z_{n}, \quad n \neq 1.
\end{equation}
When $n \leq 1$ and when $n \geq 3$, we have the equation \eqref{znip} above. When $n=0$, we have
\begin{equation}\label{z-10}
z_{-1}=a_{0}z_{0},
\end{equation}
and when $n=2$ we have
\begin{equation}\label{z-23}
-z_{3}=a_{2}z_{2}.
\end{equation}
Notice that equations \eqref{znip} for $n \leq 0$ and $n \geq 2$ are uncoupled.
Assume that $z_{n} \neq 0$ for $n \neq 1$ and setting $u_{n}=z_{n-1}/z_{n}$ for $n \neq 1$ we first note that $u_{2}=z_{1}/z_{2}=0$. Also, from \eqref{z-10} and \eqref{z-23}, we obtain that $u_{0}=a_{0}$ and $u_{3}=-1/a_{2}$. Equation \eqref{znip} thus becomes two separate equations in the variable $u_{n}$ with initial conditions given by
\begin{equation}\label{unformula1ip1}
u_{n}=a_{n}+\frac{1}{u_{n+1}}, n \leq -1, \quad u_{0}=a_{0},
\end{equation}
and
\begin{equation}\label{unformula1ip2}
u_{n}=a_{n}+\frac{1}{u_{n+1}}, n \geq 3, \quad u_{3}=-1/a_{2}.
\end{equation}
Rewriting \eqref{unformula1ip1} as $u_{n+1}=-1/(a_{n}-u_{n})$ and iterating backwards, for $n=0$, we see that $u_{0}=a_{0}$ must be given as the continued fraction
\begin{equation}\label{ipeq}
u_{0}=a_{0}(\lambda,\nu)=u_{0}^{(2)}(\lambda,\nu)=-[a_{-1};a_{-2};\ldots],
\end{equation}
where the formula for the continued fraction $u_{0}^{(2)}(\lambda,\nu)$ is the same as before, see equation \eqref{un12}. %
If we rewrite equation \eqref{unformula1ip2} as $u_{n+1}=-1/(a_{n}-u_{n})$ and solve with the initial condition $u_{3}=-1/a_{2}$ we run into the following problem. $|u_{n+1}|$ is bounded above by $1/a_{n}$ and since $a_{n} \to \infty$ as $n \to \infty$, we have that $\lim_{n \to \infty} u_{n+1}=0$. If we attempt to reconstruct the sequence $z_{n}$ from the $u_{n}$ thus obtained we will have the formula
\begin{equation}
z_{n}=\frac{z_{2}}{u_{3}u_{4}\ldots u_{n}},
\end{equation}
where $z_{2}$ is arbitrary. The fact that $\lim_{n \to \infty} u_{n}  = 0$ implies 
that the $z_{n}$ thus constructed grows exponentially. There is potentially another problem with this construction. Since $u_{n}=a_{n}+1/u_{n+1}$ for $n \geq 3$, iterating this forward, we see that $u_{3}$ is given by the continued fraction expression $a_{3}+[a_{4};a_{5}\ldots]$ (recall continued fraction Notation \ref{cf-notation}). The continued fraction thus constructed is positive, since $a_{n}$, $n \geq 3$ are all positive and thus cannot match the initial condition $u_{3}=-1/a_{2}$.  

These problems arise because $z_{2}$ was chosen to be non-zero. We avoid these problems by setting $z_{2}=0$. Then $z_{3}$ becomes $0$ by \eqref{z-23} and all the succeeding $z_{n}$ are also zero using \eqref{znip}. This then means that $w_{n}=0$ for $n >1$. We find a root of equation \eqref{ipeq} and set $z_{n}$ to be zero for all $n \geq 1$. 

Summarizing the discussion above, we obtain, in the case $\mathbf q$ is of type $I_{+}$, $\lambda>0$ is an eigenvalue of equation \eqref{evnsip} if and only if $\lambda > 0$ is a root of the equation \eqref{ipeq} and moreover, the eigenvector $(w_{n})$ satisfies the property that $w_{n}=0$ for all $n > 1$. We only consider the continued fraction $u_{n}^{(2)}(\lambda,\nu)$ for $n \leq 0$ given by \eqref{un2} and the continued fraction $g(\lambda,\nu)$ given by \eqref{g} and we wish to find a root $\lambda >0$ of the equation
\begin{equation}\label{eqa0gip}
a_{0}(\lambda,\nu)+g(\lambda,\nu)=0.
\end{equation}

Let us now consider the case $I_{-}$. As before, let $z_{n}=\rho_{n}w_{n}$, note that $z_{-1}=0$, reduces the eigenvalue equation \eqref{evnsip} to
\begin{equation}\label{znim}
z_{n-1}-z_{n+1}=a_{n}(\lambda,\nu)z_{n}, \quad n \neq -1.
\end{equation}
When $n \leq -3$ and when $n \geq 1$, we have the equation \eqref{znim} above. When $n=-2$, we have
\begin{equation}\label{z-32}
z_{-3}=a_{-2}z_{-2},
\end{equation}
and when $n=0$ we have
\begin{equation}\label{z01}
z_{1}=-a_{0}z_{0}.
\end{equation}
Similar to the reasons described for $I_{+}$, we will need to choose $z_{-2}$ to be $0$ (for otherwise the $z_{n}$ thus constructed for $n \leq -2$ will be exponentially unbounded). Equation \eqref{z-32} implies $z_{-3}=0$ and all the preceeding $z_{n}$ are zero for $n \leq -3$.
Consider \eqref{znim} for $n \geq 0$. Assuming $z_{n} \neq 0$ for $n \geq 1$ and setting $u_{n}=z_{n-1}/z_{n}$, we first note that $u_{0}=z_{-1}/z_{0}=0$ and  $u_{1}=z_{0}/z_{1}=-1/a_{0}$. Equation \eqref{znim} then reduces to
\begin{equation}\label{unformula1im2}
u_{n}=a_{n}+\frac{1}{u_{n+1}},  n \geq 0, \quad u_{0}=0.
\end{equation}
Iterating this forward, we see that $u_{0}=0$ must match the continued fraction $a_{0}+[a_{1};a_{2};\ldots]$ which is given by (recall formula \eqref{un12}) $u_{0}^{(1)}(\lambda,\nu)$. That is, $u_{0}$ is given by
\begin{equation}\label{imeq}
0=u_{0} = u_{0}^{(1)}(\lambda,\nu)=a_{0}(\lambda,\nu)+[a_{1};a_{2};\ldots].
\end{equation}
Recalling formula \eqref{f}, we obtain, in the case $\mathbf q$ is of type $I_{-}$, that $\lambda > 0$ is an eigenvalue of the equation \eqref{znim} if and only if $\lambda > 0$ solves equation \eqref{imeq} which can be rewritten as 
\begin{equation}\label{eqa0fim}
a_{0}(\lambda,\nu)+f(\lambda,\nu)=0.
\end{equation}
Moreover, the eigenvector $(w_{n})$ satisfies the property that $w_{n}=0$ for all $n < -1$. 
We only consider the continued fraction $u_{n}^{(1)}(\lambda,\nu)$ for $n \geq 0$ given by \eqref{un1} and the continued fraction $f(\lambda,\nu)$ given by \eqref{g} and we wish to find a root $\lambda >0$ of the equation
\begin{equation}\label{eqa0fim2}
a_{0}(\lambda,\nu)+f(\lambda,\nu)=0.
\end{equation}
Equations \eqref{eqa0gip} and \eqref{eqa0fim2} have positive roots provided the following inequalities hold
\begin{equation}\label{ineqg0ip}
g(0,\nu) > -a_{0}(0,\nu)= -\frac{\nu\|\mathbf q\|^{2}}{\rho_{0}}
\end{equation}
and 
\begin{equation}\label{ineqf0im}
f(0,\nu) > -a_{0}(0,\nu)= -\frac{\nu\|\mathbf q\|^{2}}{\rho_{0}}.
\end{equation}
Lemma \ref{nu0} now reads
\begin{lemma}\label{nu0ipim}
There exist $\nu_{0}^{p}>0$ and $\nu_{0}^{m}>0$ such that for all $\nu$ in the interval $(0,\nu_{0}^{p})$ and $(0,\nu_{0}^{m})$ respectively the inequalities in equations \eqref{ineqg0ip} and \eqref{ineqf0im} are satisfied.
\end{lemma}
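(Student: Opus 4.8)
The plan is to adapt the proof of Lemma \ref{nu0} almost verbatim, the only difference being that here one works with a single continued fraction rather than with the sum $f+g$. The crucial structural observation is that exactly one of the two fractions survives in each case. For $\mathbf q$ of type $I_+$ Remark \ref{rho1} gives $\rho_1=0$ (so $b_1$, and hence $f(0,\nu)$, is undefined), while $\rho_{-n}>0$ for every $n\geq 1$; thus only $g(0,\nu)=[\nu b_{-1};\nu b_{-2};\ldots]$ is well defined, and all its partial quotients $b_{-1},b_{-2},\ldots$ are strictly positive. Symmetrically, for type $I_-$ we have $\rho_{-1}=0$, so only $f(0,\nu)=[\nu b_1;\nu b_2;\ldots]$ survives, with all $b_1,b_2,\ldots>0$. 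Consequently Lemma \ref{even-trunc-properties} applies unchanged to $g(0,\nu)^{2k}$ in the $I_+$ case and to $f(0,\nu)^{2k}$ in the $I_-$ case.

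For the $I_+$ case I would first invoke the standard sandwiching of a continued fraction between its even and odd truncations (recall \eqref{fconv}) to get $g(0,\nu)\geq g(0,\nu)^{2k}$ for every $k$. Hence it suffices to produce a single $k$ and a $\nu_0^p=\nu_0^p(k)>0$ with
\[
g(0,\nu)^{2k}>-a_0(0,\nu)=-\frac{\nu\|\mathbf q\|^2}{\rho_0},\qquad \nu\in(0,\nu_0^p).
\]
By Lemma \ref{even-trunc-properties}, $g(0,\nu)^{2k}$ is continuous and differentiable on $[0,+\infty)$, vanishes at $\nu=0$, and has right-derivative $g'(0,\nu)^{2k}|_{\nu=0}=b_{-2}+b_{-4}+\cdots+b_{-2k}$, while the right-hand side is a line through the origin of positive slope $-\|\mathbf q\|^2/\rho_0$ (positive since $\rho_0<0$ by Remark \ref{rho1}). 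I would then apply the same elementary real-analysis fact used in Lemma \ref{nu0}: a $C^1$ function vanishing at $0$ whose right-derivative there strictly exceeds the slope of a line through the origin lies strictly above that line on a right neighbourhood of $0$. It therefore remains only to choose $k$ so that
\[
b_{-2}+b_{-4}+\cdots+b_{-2k}>-\frac{\|\mathbf q\|^2}{\rho_0}.
\]
Since the right-hand side is a fixed positive constant while $b_{-2k}=\|\mathbf q-2k\mathbf p\|^4/(\|\mathbf q-2k\mathbf p\|^2-\|\mathbf p\|^2)$ grows like $4k^2$, the left-hand side diverges, so any sufficiently large $k$ works; fixing such a $k$ yields $\nu_0^p$ and establishes \eqref{ineqg0ip}.

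The type $I_-$ case is completely symmetric: I would replace $g$ by $f$ throughout, bound $f(0,\nu)\geq f(0,\nu)^{2k}$, use the right-derivative $f'(0,\nu)^{2k}|_{\nu=0}=b_2+b_4+\cdots+b_{2k}$ from Lemma \ref{even-trunc-properties}, and choose $k$ large enough that $b_2+b_4+\cdots+b_{2k}>-\|\mathbf q\|^2/\rho_0$, again possible because $b_{2k}$ grows like $4k^2$. This produces $\nu_0^m$ and establishes \eqref{ineqf0im}.

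I do not expect a genuine obstacle here: once one records that exactly one of the two continued fractions is well defined (with positive partial quotients) in each of the types $I_\pm$, the argument is a line-by-line specialization of Lemma \ref{nu0}. The only points needing a moment's care are confirming, via Remark \ref{rho1}, that $\rho_0<0$ in both cases (so that the target is a line of positive slope) and that the partial quotients entering the relevant truncation are all positive, so that Lemma \ref{even-trunc-properties} applies exactly as stated.
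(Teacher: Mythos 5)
Your proposal is correct and takes essentially the same approach as the paper: the paper's own (one-sentence) proof likewise re-runs Lemma \ref{nu0} with a single continued fraction, using the properties of $g(0,\nu)^{2k}$ from Lemma \ref{even-trunc-properties} to prove \eqref{ineqg0ip} in the type $I_{+}$ case and those of $f(0,\nu)^{2k}$ to prove \eqref{ineqf0im} in the type $I_{-}$ case. Your write-up is in fact more detailed than the paper's, and the points you single out (positivity of the surviving partial quotients via Remark \ref{rho1}, and $\rho_{0}<0$ so the comparison line has positive slope) are exactly the observations that make this specialization legitimate.
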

\begin{proof}
The proof is similar to the proof of Lemma \ref{nu0} except that instead of using the properties of both the even truncated continued fractions for $f(0,\nu)^{2k}$ and $g(0,\nu)^{2k}$, we instead use separately, the corresponding properties for $g(0,\nu)^{2k}$ from Lemma \ref{even-trunc-properties} to prove inequality \eqref{ineqg0ip} and the properties for $f(0,\nu)^{2k}$ to prove inequality \eqref{ineqf0im}.
\end{proof}
Lemma \ref{root} now reads
\begin{lemma}\label{rootipim}
If $\mathbf q$ is, respectively, of type $I_{+}$ and type $I_{-}$, then there exist a $\nu_{0}^{p}>0$ and $\nu_{0}^{m}>0$ such that if $\nu$ is respectively  in the interval $(0,\nu_{0}^{p})$ and $(0,\nu_{0}^{m})$ the equations \eqref{eqa0gip} and \eqref{eqa0fim} have at least one solution $\lambda > 0$.
\end{lemma}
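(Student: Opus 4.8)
The plan is to mirror the argument of Lemma \ref{root} almost verbatim, using a single continued fraction in place of the sum $f+g$. I would treat the type $I_+$ case first, where we seek a root $\lambda>0$ of \eqref{eqa0gip}, i.e.\ of $g(\lambda,\nu)=-a_0(\lambda,\nu)$. Since $\bq$ is of type $I_+$ we have $\rho_0<0$, so $-a_0(\lambda,\nu)=(-\lambda-\nu\|\bq\|^2)/\rho_0$ is, as a function of $\lambda\geq 0$, a straight line of positive slope $-1/\rho_0$ and positive $y$-intercept $-\nu\|\bq\|^2/\rho_0$; in particular $-a_0(\lambda,\nu)\to+\infty$ as $\lambda\to+\infty$. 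A key preliminary remark is that the continued fraction $g(\lambda,\nu)=[a_{-1};a_{-2};\ldots]$ involves only the coefficients $a_n$ with $n\leq -1$, all of which are well defined for type $I_+$ by Remark \ref{rho1} (the coefficient $a_1$, undefined because $\rho_1=0$, never enters), so the $I_+$ degeneracy does not interfere with $g$; moreover $\rho_n>0$ for $n\leq -1$ makes every such $a_n$ positive for $\lambda>0$, hence $g(\lambda,\nu)\geq 0$.

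The analytic ingredients I would then assemble are exactly those of Lemma \ref{root}, restricted to $g$. By the Van Vleck and Stieltjes--Vitali theorems (as invoked in Lemma \ref{ulemma}(1)) the map $\lambda\mapsto g(\lambda,\nu)$ is continuous on $(0,\infty)$; it is non-negative and bounded above by $1/a_{-1}(\lambda,\nu)$, which tends to $0$ as $\lambda\to+\infty$, so $g(\lambda,\nu)\to 0$; and by Lemma \ref{ff0} one has $\lim_{\lambda\to 0^+}g(\lambda,\nu)=g(0,\nu)$. Consequently the continuous function $\lambda\mapsto g(\lambda,\nu)+a_0(\lambda,\nu)$ converges to $g(0,\nu)+a_0(0,\nu)$ as $\lambda\to 0^+$ and to $-\infty$ as $\lambda\to+\infty$.

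The single remaining input is the sign at the left endpoint, and this is precisely what Lemma \ref{nu0ipim} supplies: there is a $\nu_0^p>0$ such that for every $\nu\in(0,\nu_0^p)$ one has $g(0,\nu)>-a_0(0,\nu)$, i.e.\ $g(0,\nu)+a_0(0,\nu)>0$. Combining this positive left-hand limit with the limit $g(\lambda,\nu)+a_0(\lambda,\nu)\to-\infty$ as $\lambda\to+\infty$, the intermediate value theorem yields a root $\lambda>0$ of \eqref{eqa0gip}. The type $I_-$ case is handled identically after replacing $g$ by $f(\lambda,\nu)=[a_1;a_2;\ldots]$, which involves only the coefficients $a_n$ with $n\geq 1$ (all well defined and positive when $\rho_{-1}=0$), using the bound $f\leq 1/a_1(\lambda,\nu)$, the limit from Lemma \ref{ff0}, and the inequality $f(0,\nu)>-a_0(0,\nu)$ for $\nu\in(0,\nu_0^m)$ from Lemma \ref{nu0ipim}, to produce a root of \eqref{eqa0fim}.

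I do not anticipate a genuine obstacle, since the essential quantitative estimate---that the relevant continued fraction at $\lambda=0$ dominates the line $-a_0(0,\nu)$ for small $\nu$---has already been secured in Lemma \ref{nu0ipim}. The only point demanding care is bookkeeping: one must verify that the degenerate coefficient ($a_1$ for $I_+$, $a_{-1}$ for $I_-$) is genuinely absent from the continued fraction being used, so that the continuity, boundedness, and limit properties of Lemma \ref{ulemma}(1) and Lemma \ref{ff0} transfer unchanged to the one-sided setting.
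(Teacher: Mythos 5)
Your proposal is correct and is essentially the proof the paper intends: the paper states Lemma \ref{rootipim} as the one-sided analogue of Lemma \ref{root}, and the argument is exactly what you give---continuity of the single continued fraction ($g$ for $I_+$, $f$ for $I_-$) via Van Vleck/Stieltjes--Vitali, the decay bound $g\leq 1/a_{-1}$ (resp.\ $f\leq 1/a_1$) as $\lambda\to+\infty$, the limit at $\lambda=0^+$ from Lemma \ref{ff0}, and the key inequality from Lemma \ref{nu0ipim}, combined by the intermediate value theorem. Your observation that the degenerate coefficient ($a_1$ for $I_+$, $a_{-1}$ for $I_-$) never enters the relevant continued fraction is precisely the bookkeeping point that makes the transfer of the two-sided argument legitimate.
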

Property \ref{eigvectorions} is modified as follows:
\begin{property}\label{eigvectorionsipim}
\noindent
\begin{enumerate}

\item[(1)] In case $\bq$ is of type $I_{+}$, the eigenvector $(w_{n})$ of \eqref{evnsip} is such that the following holds: either $w_{n}=0$ for $n>1$, $w_{1}>0$, $w_{n}<0$ for $n=-1,0$, and $(-1)^{|n|}w_{n}>0$ for $n \leq -2$, or the entries of the vector $(-w_{n})$ satisfy the inequalities just listed. 
\item[(2)] In case $\bq$ is of type $I_{-}$, the eigenvector $(w_{n})$ of \eqref{evnsip} is such that the following holds: either $w_{n}=0$ for $n< -1$, $w_{n}<0$ for $n=-1,0$, and $w_{n}>0$ for $n > 0$, or the entries of the vector $(-w_{n})$ satisfy the inequalities just listed.
\end{enumerate}
\end{property}
The changes to the statement of Theorem \ref{mainthm} are as follows. 
\begin{theorem}\label{mainthmipim}
Suppose \eqref{uni} is a steady state solution to the 2D Navier-Stokes equations \eqref{ns} such that there exists at least one point 
$\mathbf q \in \mathcal Q(\mathbf p)$ respectively of type $I_{+}$ or type $I_{-}$, where $\mathbf q$ is not parallel to $\mathbf p$. Also, we assume that $\Gamma \in \mathbb R$ and satisfies the normalization condition $\frac{\Gamma(\bq\wedge\bp)}{2\|\bp\|^{2}}=1$. Then there exist a $\nu_{0}^{p}>0$ and a $\nu_{0}^{m}>0$ such that for all $\nu$ in the respective intervals $(0,\nu_{0}^{p})$ and $(0,\nu_{0}^{m})$, the steady state $\omega^{0}$, $\mathbf u^{0}$ and $f$ defined in 
\eqref{uni} are linearly unstable respectively when $\mathbf q$ is of type $I_{+}$ and $I_{-}$. In particular, the operator $L_{B,\mathbf q}$ in the space $\ell^{2}_{s}(\mathbb Z)$ has a positive eigenvalue and therefore $L_{B}$ in $\ell_{s}^{2}(\bbZ^{2})$ has a positive eigenvalue. Moreover, the following assertions hold for all $\nu$ in the respective intervals $(0,\nu_{0}^{p})$ and $(0,\nu_{0}^{m})$:
\begin{enumerate}
\item[(1)] If $\mathbf q$ is of type $I_+$ then $\lambda > 0$ is an eigenvalue of $L_{B,\mathbf q}$ with eigenvector $(w_{n})$ satisfying Property \ref{eigvectorionsipim} (1) if and only if $\lambda > 0$ is a solution to the equation
\begin{equation}\label{eqiovip}
a_{0}(\lambda,\nu)+g(\lambda,\nu)=0.
\end{equation}
\item[(2)] If $\mathbf q$ is of type $I_-$ then $\lambda > 0$ is an eigenvalue of $L_{B,\mathbf q}$ with eigenvector $(w_{n})$ satisfying Property \ref{eigvectorionsipim} (2) if and only if $\lambda > 0$ is a solution to the equation
\begin{equation}\label{eqiovim}
a_{0}(\lambda,\nu)+f(\lambda,\nu)=0.
\end{equation}
\end{enumerate}
\end{theorem}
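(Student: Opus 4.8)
The plan is to run the same forward-and-backward construction used for Theorem \ref{mainthm}, but feeding it through the degenerate-index analysis already carried out in Subsection 2.1; since the cases $I_+$ and $I_-$ are mirror images under $n \mapsto -n$, I would present $I_+$ in detail and indicate the symmetric changes for $I_-$.

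For the existence half in the case $I_+$, I would first invoke Lemma \ref{rootipim} to produce $\nu_0^p > 0$ so that for $\nu \in (0,\nu_0^p)$ the equation \eqref{eqiovip}, namely $a_0(\lambda,\nu) + g(\lambda,\nu) = 0$, has a positive root $\lambda$. Fixing such a $\lambda$, I would build the eigenvector exactly as in Subsection 2.1: set $z_0 = 1$, define $u_n = u_n^{(2)}(\lambda,\nu)$ for $n \le 0$ via \eqref{un2}, and generate the left tail $(z_n)_{n\le 0}$ from \eqref{zn<0}; because $\rho_1 = 0$ one has $z_1 = 0$ automatically, and I would set $z_n = 0$ for all $n \ge 2$, which is the choice forced by \eqref{z-23} and \eqref{znip} to rule out the exponentially growing branch discussed there. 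The recovered sequence $w_n = z_n/\rho_n$ is then defined for $n \ne 1$, and the single remaining entry $w_1$ is fixed by the degenerate eigenvalue equation at index $n=1$, that is $\rho_0 w_0 = (\lambda + \nu\|\mathbf q + \mathbf p\|^2) w_1$, which also pins down its sign.

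The verification step is where I would be most careful, and I expect it to be the main obstacle. I would check that $(w_n)$ solves \eqref{evnsip} at every index, paying special attention to $n = 0, 1, 2$ where $\rho_1 = 0$ makes the tridiagonal recursion degenerate: at $n = 0$ and $n = 2$ the term $\rho_1 w_1$ simply drops out, so these reduce to relations already encoded in the $u^{(2)}$ continued fraction and in $z_2 = 0$, while the matching $u_0^{(2)}(\lambda,\nu) = a_0(\lambda,\nu)$ forced by the root of \eqref{eqiovip} is exactly what makes the $n=0$ equation consistent without imposing any further constraint. The sign pattern of Property \ref{eigvectorionsipim}(1) then follows from the signs of $u_n^{(2)}$ and of $\rho_n$ as in the type $I_0$ case, and membership in $\ell^2_s(\mathbb Z)$ is immediate: the right tail vanishes identically, while the left tail decays exponentially by estimate \eqref{u2ineq} of Lemma \ref{ulemma}, just as in Lemma \ref{eigvectprop}.

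For the converse, I would start from an eigenvalue $\lambda > 0$ whose eigenvector satisfies Property \ref{eigvectorionsipim}(1), put $z_n = \rho_n w_n$ so that $z_1 = 0$ and $z_n = 0$ for $n > 1$, form $u_n = z_{n-1}/z_n$ for $n \le 0$, and iterate \eqref{unformula1ip1} backwards to identify $u_0 = a_0(\lambda,\nu)$ with $u_0^{(2)}(\lambda,\nu) = -g(\lambda,\nu)$, which is \eqref{eqiovip}. The case $I_-$ is obtained by reflection: one uses Lemma \ref{rootipim} for \eqref{eqiovim}, the continued fractions $u_n^{(1)}(\lambda,\nu)$ from \eqref{un1} and $f(\lambda,\nu)$, sets $z_{-2} = 0$ in place of $z_2 = 0$, and reads off the sign pattern of Property \ref{eigvectorionsipim}(2). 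Finally, the direct-sum decomposition $L_B = \oplus_{\mathbf q} L_{B,\mathbf q}$ transfers the positive eigenvalue of $L_{B,\mathbf q}$ on $\ell^2_s(\mathbb Z)$ to $L_B$ on $\ell^2_s(\mathbb Z^2)$, yielding linear instability.
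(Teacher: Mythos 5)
Your proposal is correct and takes essentially the same approach as the paper: the paper's proof of Theorem \ref{mainthmipim} is exactly the degenerate-index analysis of Subsection 2.1 (forcing $z_2=0$ for type $I_+$, resp. $z_{-2}=0$ for type $I_-$, to exclude the exponentially growing and sign-inconsistent branch), combined with Lemma \ref{rootipim} and the forward/backward continued-fraction construction of Theorem \ref{mainthm}. Your observation that $w_1$ is pinned down by the degenerate equation at $n=1$, namely $\rho_0 w_0 = (\lambda+\nu\|\mathbf q+\mathbf p\|^2)w_1$, correctly supplies a detail the paper leaves implicit and is consistent with the sign required in Property \ref{eigvectorionsipim}.
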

\section{A Fredholm determinant characterization of instability}
In this section, we offer a characterization of unstable eigenvalues $\lambda$ of the operator $L_{B,\mathbf q}$ in terms of Fredholm determinants. Let us consider the eigenvalue equation \eqref{evns} which can be rewritten as
\begin{equation}
L_{B, \mathbf q} (w_{n})=((S-S^*)\diag{\rho_{n}}-\nu \|\mathbf q+n \mathbf p\|^{2})(w_{n})=\lambda(w_{n}),
\end{equation}
where $S:(w_{n}) \mapsto (w_{n-1})$ denotes the shift operator on $\ell^{2}(\mathbb Z)$ and $S^{*}$ denotes its adjoint. Suppose $\lambda > 0$ is in the point spectrum $\sigma_{p}(L_{B,\mathbf q})$. The operator of multiplication by $-\nu \|\mathbf q+n \mathbf p\|^{2}$ has spectrum on the negative real axis, so if $\lambda>0$ then $\lambda$ is in the resolvent set of the operator $-\nu \|\mathbf q+n \mathbf p\|^{2}$. Consider the following factorization
\begin{align*}
L_{B, \mathbf q}-\lambda &= (S-S^*)\diag{\rho_{n}}-\nu \|\mathbf q+n \mathbf p\|^{2}-\lambda \\
&=(-\nu \|\mathbf q+n \mathbf p\|^{2}-\lambda)\big(I+  (-\nu \|\mathbf q+n \mathbf p\|^{2}-\lambda)^{-1}(S-S^*)\diag{\rho_{n}}\big)\\
&=(-\nu \|\mathbf q+n \mathbf p\|^{2}-\lambda)\big(I+K_{\lambda}\big) \numberthis \label{factorization},
\end{align*}
where we have denoted by $K_{\lambda}$ the operator
\begin{equation}
K_{\lambda}=(-\nu \|\mathbf q+n \mathbf p\|^{2}-\lambda)^{-1}(S-S^*)\diag\{\rho_{n}\}.
\end{equation}
Notice that $(S-S^*)$ and $\diag{\rho_{n}}$ are bounded operators, the operator $(-\nu \|\mathbf q+n \mathbf p\|^{2}-\lambda)^{-1}$ is trace class since 
$\sum_{n \in \mathbb Z} (-\nu \|\mathbf q+n \mathbf p\|^{2}-\lambda)^{-1} < +\infty$. Thus $K_{\lambda}$ being a product of bounded operators and trace class operator is itself trace class. One can thus attach a perturbation determinant to it, (see \cite[Chapter 7; Theorem 6.1]{GGK90} for example, for a construction of this determinant), i.e., we have
\begin{equation}\label{mathcald}
\mathcal D(\lambda)=\det(I+K_{\lambda})=\prod_{n}(1+\kappa_\lambda^{n}),
\end{equation}
where the index $n$ in the above product ranges over all the eigenvalues $\kappa_\lambda^{n}$ of the operator $K_{\lambda}$. We thus have
\begin{theorem}\label{fredchar}
$\lambda>0$ is an eigenvalue of the operator $L_{B,\mathbf q}$ if and only if $\det(I+K_{\lambda})=0$ if and only if $-1 \in \sigma_{p}(K_{\lambda})$.
\end{theorem}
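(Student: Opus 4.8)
The plan is to read off the result directly from the factorization \eqref{factorization}, using that for $\lambda>0$ the diagonal multiplication operator $M:=-\nu\|\mathbf q+n\mathbf p\|^{2}-\lambda$ is boundedly invertible. First I would record that, since the spectrum of the multiplication operator $-\nu\|\mathbf q+n\mathbf p\|^{2}$ lies on the negative real axis, every $\lambda>0$ belongs to its resolvent set; indeed the diagonal entries satisfy $|{-\nu\|\mathbf q+n\mathbf p\|^{2}-\lambda}|\geq\lambda>0$, so $M$ is injective with $\|M^{-1}\|\leq1/\lambda$. Moreover $M$ is a bijection from $\dom(M)=\dom(L_{B,\mathbf q})$ onto $\ell^{2}(\mathbb Z)$ (the off-diagonal part $(S-S^{*})\diag\{\rho_{n}\}$ being bounded), so $\ran M^{-1}=\dom(M)$.

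Next I would verify that the factorization $L_{B,\mathbf q}-\lambda=M\,(I+K_{\lambda})$ holds on $\dom(L_{B,\mathbf q})$ and that it identifies the two kernels. The key observation is that $\ran K_{\lambda}\subseteq\ran M^{-1}=\dom(M)$, so if $(I+K_{\lambda})w=0$ then $w=-K_{\lambda}w\in\dom(M)$ automatically; hence no solution of $(I+K_{\lambda})w=0$ is lost for domain reasons, and every such $w$ lies in the domain of the unbounded operator $L_{B,\mathbf q}$. Since $M$ is injective, $(L_{B,\mathbf q}-\lambda)w=M(I+K_{\lambda})w=0$ is equivalent to $(I+K_{\lambda})w=0$, so $\ker(L_{B,\mathbf q}-\lambda)=\ker(I+K_{\lambda})$. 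This already gives the equivalence ``$\lambda\in\sigma_{p}(L_{B,\mathbf q})$ if and only if $-1\in\sigma_{p}(K_{\lambda})$,'' because $\ker(I+K_{\lambda})\neq\{0\}$ is exactly the assertion that $-1$ is an eigenvalue of $K_{\lambda}$.

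It remains to connect this to the vanishing of the Fredholm determinant, and here the product representation in \eqref{mathcald} does the work. Since $K_{\lambda}$ is trace class, the product $\mathcal D(\lambda)=\prod_{n}(1+\kappa_{\lambda}^{n})$ over the eigenvalues of $K_{\lambda}$ converges absolutely, so $\mathcal D(\lambda)=0$ if and only if one of its factors vanishes, i.e.\ if and only if $\kappa_{\lambda}^{n}=-1$ for some $n$, which is precisely the statement $-1\in\sigma_{p}(K_{\lambda})$. Equivalently one may cite the standard fact \cite[Chapter 7]{GGK90} that $\det(I+K_{\lambda})\neq0$ if and only if $I+K_{\lambda}$ is boundedly invertible, together with the Fredholm alternative for the compact operator $K_{\lambda}$ (the operator $I+K_{\lambda}$ has Fredholm index zero, hence is injective if and only if invertible). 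Chaining this equivalence with the kernel identity of the previous step proves all three statements.

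I expect the only genuine subtlety to be the domain bookkeeping in the second step: confirming that the factorization is valid on $\dom(L_{B,\mathbf q})$ and, in particular, that an eigenvector of $K_{\lambda}$ at $-1$ automatically belongs to the domain of the unbounded operator $L_{B,\mathbf q}$. Once the containment $\ran K_{\lambda}\subseteq\dom(M)$ is noted this is immediate, and the two remaining equivalences are direct consequences of the absolutely convergent product in \eqref{mathcald} and standard trace class/Fredholm theory.
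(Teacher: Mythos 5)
Your proposal is correct and takes essentially the same route as the paper's own proof: the factorization \eqref{factorization} with the boundedly invertible diagonal factor reduces the question of whether $\lambda>0$ is an eigenvalue of $L_{B,\mathbf q}$ to whether $-1\in\sigma_{p}(K_{\lambda})$, and the absolutely convergent product \eqref{mathcald} converts this into the vanishing of $\det(I+K_{\lambda})$. Your additional care — the containment $\ran K_{\lambda}\subseteq\dom(M)$ guaranteeing that eigenvectors of $K_{\lambda}$ at $-1$ lie in the domain of the unbounded operator, and the explicit kernel identity $\ker(L_{B,\mathbf q}-\lambda)=\ker(I+K_{\lambda})$ — is a welcome refinement that the paper leaves implicit (its proof speaks of spectrum rather than point spectrum), but the underlying argument is the same.
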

\begin{proof}
Since $\lambda$ is in the resolvent set of the operator $-\nu \|\mathbf q+n \mathbf p\|^{2}$, the factorization in \eqref{factorization} implies that $\lambda >0$ is in the spectrum of $L_{B,\mathbf q}$ if and only if $-1$ is in the spectrum of $K_{\lambda}$. The fact that $-1$ is in the spectrum of $K_{\lambda}$ if and only if $\det(I+K_{\lambda})=0$ follows from the product formula \eqref{mathcald} above. 
\end{proof}
See \cite[Theorem 4.2]{LV18} for a related characterization of the unstable eigenvalues of the linearized vorticity operator $L_{\rm vor}$ for the 2D Euler equations in terms of 2-modified perturbation determinants associated with a Hilbert-Schmidt operator. %
Note that in theorem \ref{fredchar} above, there is no reference to whether $\mathbf q$ is of type $I$. If, in addition, we specialize to the case that $\mathbf q$ is of type $I$, we have a one to one correspondence between the roots of equation \eqref{fg} and zeros of the perturbation determinant $\det(I+K_{\lambda})=0$. Every root of the equation \eqref{fg} contributes a zero to the perturbation determinant $\det(I+K_{\lambda})=0$.
\begin{theorem}\label{1-1}
Suppose $\lambda > 0$ is a root of the equation \eqref{fg}, then $\det(I+K_{\lambda})=0$. In addition, 
if $\lambda>0$ is an eigenvalue of the operator $L_{B,\mathbf q}$ with eigenvector $(w_{n})$ satisfying the property , then $\lambda$ is a root of \eqref{fg} if and only if $\det(I+K_{\lambda})=0$.
\end{theorem}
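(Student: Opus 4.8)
The plan is to derive Theorem \ref{1-1} by composing the two biconditionals already in hand, Theorem \ref{mainthm} and Theorem \ref{fredchar}, rather than by any fresh computation. Theorem \ref{mainthm} identifies the roots $\lambda>0$ of \eqref{fg} with the eigenvalues of $L_{B,\mathbf q}$ whose eigenvector obeys Property \ref{eigvectorions}, while Theorem \ref{fredchar} identifies the eigenvalues of $L_{B,\mathbf q}$ with the zeros of $\mathcal D(\lambda)=\det(I+K_\lambda)$. Stitching these together should give the claimed correspondence, so the only real bookkeeping is tracking the side condition (eigenvector satisfies Property \ref{eigvectorions}) that the backward direction needs.

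First I would settle the opening assertion. If $\lambda>0$ is a root of \eqref{fg}, the ``if'' direction of Theorem \ref{mainthm} produces an eigenvector of $L_{B,\mathbf q}$ for $\lambda$ (built explicitly from the continued fractions $u_n^{(1)}$, $u_n^{(2)}$ and satisfying Property \ref{eigvectorions}); hence $\lambda\in\sigma_p(L_{B,\mathbf q})$, and Theorem \ref{fredchar} then forces $\det(I+K_\lambda)=0$ through the product formula \eqref{mathcald}. This already shows each root of \eqref{fg} contributes a zero of the perturbation determinant.

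For the second assertion I would fix $\lambda>0$ assumed to be an eigenvalue of $L_{B,\mathbf q}$ with eigenvector $(w_n)$ satisfying Property \ref{eigvectorions}, and prove the biconditional. The forward implication is exactly the opening assertion. For the converse, I would start from $\det(I+K_\lambda)=0$ and read Theorem \ref{fredchar} as confirming $\lambda\in\sigma_p(L_{B,\mathbf q})$; the standing hypothesis then lets me set $z_n=\rho_n w_n$ and $u_n=z_{n-1}/z_n$, run the forward and backward iterations of \eqref{unformula1} and \eqref{unformula2} exactly as in the ``only if'' part of Theorem \ref{mainthm}, and recover $u_0^{(1)}(\lambda,\nu)=u_0^{(2)}(\lambda,\nu)$, which is precisely \eqref{fg}.

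The step needing care — and the place the side hypothesis earns its keep — is this backward reading: by itself $\det(I+K_\lambda)=0$ only delivers an eigenvalue, and the ratios $u_n=z_{n-1}/z_n$ are defined only when $z_n\neq0$ for all $n$. Property \ref{eigvectorions} supplies exactly this: in the type $I_0$ case $\rho_n\neq0$ for every $n$, and Property \ref{eigvectorions} forces $w_n\neq0$, hence $z_n\neq0$, so the continued-fraction iteration is legitimate. I would close by remarking that this non-vanishing is in fact automatic for a type $I_0$ point, since any $\ell^2$ eigenvector must be a scalar multiple of the exponentially decaying solution of Lemma \ref{eigvectprop}, whose entries are all nonzero; the explicit side condition is therefore a convenience that keeps the correspondence with \eqref{fg} clean rather than a genuine restriction.
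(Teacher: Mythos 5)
Your proposal is correct and takes essentially the same route as the paper: the paper's proof likewise just composes Theorem \ref{mainthm} (roots of \eqref{fg} $\Leftrightarrow$ eigenvalues of $L_{B,\mathbf q}$ with eigenvector satisfying Property \ref{eigvectorions}) with Theorem \ref{fredchar} (eigenvalues of $L_{B,\mathbf q}$ $\Leftrightarrow$ zeros of $\det(I+K_{\lambda})$). The additional bookkeeping you do on the non-vanishing of the $z_{n}$ and the continued-fraction iteration is already encapsulated in the proof of Theorem \ref{mainthm}, so it is harmless but not needed at this stage.
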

\begin{proof}
By Theorem \ref{mainthm} $\lambda > 0$ is in the spectrum of $L_{B,\mathbf q}$ with eigenvector $(w_{n})$ satisfying Property \ref{eigvectorions} if and only if $\lambda$ is a root of the equation \eqref{fg}. By Theorem \ref{fredchar} $\lambda > 0$ is in the spectrum of $L_{B,\mathbf q}$ if and only if $\det(I+K_{\lambda})=0$. These two facts imply the result.
\end{proof}
Letting $k_{n}=k_{n}(\lambda)=\frac{1}{(-\nu \|\mathbf q+n \mathbf p\|^{2}-\lambda)}$, we notice that
$K_{\lambda}$ is represented by the following matrix
\begin{equation}
 K_{\lambda}=
\left( \begin{array}{ccccc}  \ddots & & & &  \\
 &0& -\rho_{0}k_{-1}&0& \\
 & k_{0}\rho_{-1} & \text{\fbox{$0$}} & -\rho_{1}k_{0} &\\
 & 0 & k_{1}\rho_{0} & 0 &  \\ 
& & & &  \ddots  
\end{array}\right).
\end{equation}
That is, $K_{\lambda}$ is a bidiagonal operator with non-zero entries only above and below the main diagonal. Here, $k_{n}=O(n^{-2})$ as $n \to \pm \infty$. The objective is to show that $-1 \in \sigma_{p}(K_{\lambda})$. In other words, find a $\lambda$ (possibly complex), where the real part of $\lambda > 0$ such that $-1$ is in the spectrum of $K_{\lambda}$. Instead of studying the roots of the continued fraction equation \eqref{fg}, an alternative way to study the spectrum of the operator $L_{B,\mathbf q}$ is to study the spectrum of the operator $K_{\lambda}$. 
\section{Extensions to the second grade fluid model, Navier-Stokes-$\alpha$ and Navier-Stokes-Voigt models}
In this section, we shall consider extensions of the main instability theorem \ref{mainthm} to regularized fluid models, the second grade fluid model, Navier-Stokes-$\alpha$ and the Navier-Stokes-Voigt models. Fix $\alpha>0$ and let the filtered / smoothed / regularized velocity $\mathbf u_{f}$ satisfy
\begin{equation}\label{ufdef} 
\mathbf u=(I-\alpha^{2}\Delta)\mathbf u_{f}, 
\end{equation}
where $\mathbf u$ is the velocity field. On appropriate function spaces, assuming that the Helmholtz operator $I-\alpha^{2}\Delta$ is invertible, $\mathbf u_{f}=(I-\alpha^{2}\Delta)^{-1}\mathbf u$ is smoother than the actual velocity $\mathbf u$ and is referred to as the filtered (or regularized or smoothed) velocity. We consider all of the following models on the two torus $\mathbb T^{2}$. The second grade fluid model, see \cite{LNTZ15a} and references therein, is given by the equations
\begin{align}\label{secondgradevel}
\partial_{t}(I-\alpha^{2}\Delta)\mathbf u_f + \mathbf u_f \cdot \nabla (I-\alpha^{2}\Delta)\mathbf u_f + &(\nabla \mathbf u_f)^{T}\cdot (I-\alpha^{2}\Delta)\mathbf u_f\nonumber \\&=-\nabla p+\nu \Delta \mathbf u_f+\nu f_{1},\\
\div \cdot \mathbf u_f&=0. \nonumber
\end{align}
In the above, $(\cdot)^{T}$ denotes the transpose of the matrix $(\cdot)$. The Navier-Stokes-$\alpha$, see \cite{FHT01}, is given by the equations
\begin{align}\label{nsalphavel}
\partial_{t}(I-\alpha^{2}\Delta)\mathbf u_f + \mathbf u_f \cdot \nabla (I-\alpha^{2}\Delta)\mathbf u_f + &(\nabla \mathbf u_f)^{T}\cdot (I-\alpha^{2}\Delta)\mathbf u_f\nonumber \\&=-\nabla p+\nu \Delta (I-\alpha^{2}\Delta)\mathbf u_f+\nu f_{1},\\
\div \cdot \mathbf u_f&=0. \nonumber
\end{align}
The only difference between the second grade fluid model and the Navier-Stokes-$\alpha$ model is in the viscosity term: $\nu \Delta \mathbf u_{f}$ for the second grade model versus $\nu \Delta (1-\alpha^{2}\Delta)\mathbf u_{f}$ for the Navier-Stokes-$\alpha$ model. In both these models, formally putting $\alpha=0$ and $\nu=0$ results in the 2D Euler equations. If we set $\alpha=0$ we obtain the Navier-Stokes equations and if we set $\nu=0$ we obtain the 2D $\alpha$-Euler equations. 
The Navier-Stokes-Voigt model, see for example \cite{BB12}, is given by the equations
\begin{align}\label{nsvoigtvel}
\partial_{t}(I-\alpha^{2}\Delta)\mathbf u_{f} + \mathbf u_{f} \cdot \nabla \mathbf u_{f} + &=-\nabla p+\nu \Delta \mathbf u_{f}+\nu f_{1},\\
\div \cdot \mathbf u_{f}&=0. \nonumber
\end{align}
\subsection{Second grade fluid model}
We first study the second grade fluid model. Taking $\curl$ of equation \eqref{secondgradevel} and setting 
\begin{equation}\label{vortformulasecondgrade}
\omega=\curl(I-\alpha^{2}\Delta)\mathbf u_{f},
\end{equation}
we obtain the equation
\begin{equation}\label{secondgradevort}
\partial_{t}\omega+\mathbf u_{f}\cdot \nabla \omega=\nu \Delta (I-\alpha^{2}\Delta)^{-1}\omega+\nu f.
\end{equation}
Denote by $\varphi$, the stream function corresponding to $\mathbf u_{f}$, i.e., $\mathbf u_{f}=(\varphi_{y},-\varphi_{x})$. Note first that $\curl \mathbf u_{f}=\curl (\varphi_{y},-\varphi_{x})=-\Delta \varphi$. We thus have that $\omega=\curl(I-\alpha^{2}\Delta)\mathbf u_{f}=-\Delta(I-\alpha^{2}\Delta)\varphi$.
Fix $\mathbf p \in \mathbb Z^{2}$, where $\mathbf p=(p_{1},p_{2})$ and denote $\mathbf p^{\perp}=(-p_{2},p_{1})$. Also, fix $\Gamma \in \mathbb R$ and  consider a steady state to \eqref{secondgradevort} of the form
\begin{align}\label{steadystatesecondgrade}
\omega^{0}=\Gamma \cos (\mathbf p \cdot \mathbf x), \quad \mathbf u_{f}^{0}=\frac{\Gamma \sin (\mathbf p \cdot \mathbf x) \mathbf p^{\perp}}{\|\mathbf p\|^{2}(1+\alpha^{2}\|\mathbf p\|^{2})}, \nonumber \\
f=-(I-\alpha^{2}\Delta)^{-1}\Delta \omega^{0}=\frac{\|\mathbf p\|^{2}\Gamma \cos (\mathbf p \cdot \mathbf x)}{1+\alpha^{2}\|\mathbf p\|^{2}}.
\end{align}
The stream function $\varphi^{0}$ that corresponds to $\omega^{0}$ is given by the formula \[\varphi^{0}=\frac{\Gamma \cos (\mathbf p \cdot \mathbf x)}{\|\mathbf p\|^{2}(1+\alpha^{2}\|\mathbf p\|^{2})}.\]
Using the Fourier series decomposition  $\omega(\mathbf x)=\sum_{\mathbf k\in\mathbb Z^2\setminus\{0\}}\omega_\mathbf k e^{i \mathbf k\cdot\mathbf x}$  
we rewrite \eqref{secondgradevort} as
\begin{equation}\label{sgfourier}
\frac{d\omega_{\mathbf k}}{dt}=\sum_{\mathbf q\in\mathbb Z^2\setminus\{0\}}\beta_{1}(\mathbf k-\mathbf q,\mathbf q)\omega_{\mathbf k-\mathbf q}\omega_\mathbf q - \nu \frac{\|\mathbf k\|^{2}}{(1+\alpha^{2}\|\mathbf k\|^{2})}\omega_{\mathbf k} + \nu f_{\mathbf k} ,\,\,\mathbf k\in\mathbb Z^2\setminus\{0\},
\end{equation}
where the coefficients $\beta_{1}(\mathbf p,\mathbf q)$ for $\mathbf p,\mathbf q\in\mathbb Z^2$ are defined as 
\begin{equation}\label{dfnsg}
\beta_{1}(\mathbf p,\mathbf q)=\frac{1}{2}\bigg(\|\mathbf q\|^{-2}(1+\alpha^{2}\|\mathbf q\|^{2})^{-1}-\|\mathbf p\|^{-2}(1+\alpha^{2}\|\mathbf p\|^{2})^{-1}\bigg)(\mathbf p\wedge\mathbf q)\,
\end{equation}
for 
$\mathbf p\neq0,\mathbf q\neq0$, and $\beta_{1}(\mathbf p,\mathbf q)=0$ otherwise.

Consider the linearization of \eqref{secondgradevort} about the steady state \eqref{steadystatesecondgrade} given by
\begin{equation}\label{sglin}
\partial_{t}\omega + \mathbf u_{f}^{0} \cdot \nabla \omega + \mathbf u_{f} \cdot \nabla \omega^{0}= \nu \Delta (I-\alpha^{2}\Delta)^{-1} \omega.
\end{equation} 
Corresponding to \eqref{sglin}, consider the linear operator $L_{B}^{\alpha}$ given by
\begin{equation}
L_{B}^{\alpha} \omega = - \mathbf u_{f}^{0} \cdot \nabla \omega - \mathbf u_{f} \cdot \nabla \omega^{0}+ \nu \Delta (1-\alpha^{2}\Delta)^{-1} \omega.
\end{equation}
Using the Fourier decomposition in \eqref{sgfourier}, consider the following linearized vorticity operator in the space $\ell^2(\mathbb Z^2)$
\begin{align}
L_B^{\alpha}&: (\omega_\mathbf k)_{\mathbf k\in\mathbb Z^2}\mapsto\nonumber\\&
\big(\beta_{1}(\mathbf p,\mathbf k-\mathbf p)\Gamma\omega_{\mathbf k-\mathbf p}-
\beta_{1}(\mathbf p,\mathbf k+\mathbf p)\Gamma \omega_{\mathbf k+\mathbf p}- \nu \|\mathbf k\|^{2}(1+\alpha^{2}\|\mathbf k\|^{2})^{-1}\omega_{\mathbf k}\big)_{\mathbf k\in\mathbb Z^2}.\label{dfnLBa}
\end{align}
Fixing a $\mathbf q$ of type $I_{0}$ and decomposing spaces and operators as before, consider the operator $L_{B,\mathbf q}^{\alpha}$ acting on the space $\ell_{2}(\mathbb Z)$ given by
\begin{align}
L_{B,\mathbf q}^{\alpha}: (w_n)_{n\in\mathbb Z}\mapsto
\bigg( \beta_{1}&(\mathbf p,\mathbf q+(n-1)\mathbf p)\Gamma w_{n-1}-
\beta_{1}(\mathbf p,\mathbf q+(n+1)\mathbf p)\Gamma w_{n+1} \nonumber \\&- \nu (1+\alpha^{2}\|\mathbf q + n \mathbf p \|^{2})^{-1}\|\mathbf q + n \mathbf p \|^{2} w_{n} \bigg)_{n\in\mathbb Z}.\label{sgLBq}
\end{align}
Let $\rho_{n}'$ be defined by the equation
\begin{equation}\label{rhonsg}
\rho_{n}'=\Gamma \beta_{1}(\mathbf p,\mathbf q+n\mathbf p)=\frac{(\mathbf q \wedge \mathbf p) \Gamma}{2 (1+\alpha^{2}\|\mathbf p\|^{2})\|\mathbf p\|^{2}}\bigg( 1-\frac{(1+\alpha^{2}\|\mathbf p\|^{2})\|\mathbf p\|^{2}}{(1+\alpha^{2}\|\mathbf q+n \mathbf p\|^{2})\|\mathbf q+n \mathbf p\|^{2}}\bigg).
\end{equation}
Normalize $\Gamma$ so that 
\begin{equation}\label{gnormsg}
\frac{(\mathbf q \wedge \mathbf p) \Gamma}{2 (1+\alpha^{2}\|\mathbf p\|^{2})\|\mathbf p\|^{2}}=1.
\end{equation}
After this normalization, $\rho_{n}' \to 1$ as $n \to \pm \infty$. Using \eqref{rhonsg} the operator $L_{B,\mathbf q}^{\alpha}$ can be rewritten as 
\begin{equation}\label{LBq-sg2}
L_{B,\mathbf q}^{\alpha}: (w_n)_{n\in\mathbb Z}\mapsto (\rho_{n-1}'w_{n-1}-\rho_{n+1}'w_{n+1}- \nu (1+\alpha^{2}\|\mathbf q + n \mathbf p \|^{2})^{-1}\|\mathbf q + n \mathbf p \|^{2} w_{n})_{n\in\mathbb Z}.
\end{equation}
Consider the eigenvalue equation
\begin{equation}
L_{B,\mathbf q}^{\alpha}(w_{n})=\lambda (w_{n}).
\end{equation}
Using \eqref{LBq-sg2} this can be rewritten as
\begin{equation}\label{evns-sg}
\rho_{n-1}'w_{n-1}-\rho_{n+1}'w_{n+1}=\lambda w_{n}+ \nu (1+\alpha^{2}\|\mathbf q + n \mathbf p \|^{2})^{-1}\|\mathbf q + n \mathbf p\|^{2}w_{n}.
\end{equation}
Letting $z_{n}=\rho_{n}'w_{n}$ and introducing the notation
\begin{equation}
e_{n}(\lambda,\nu,\alpha)=\frac{\lambda}{\rho_{n}'}+\frac{\nu \|\mathbf q + n \mathbf p\|^{2}}{(1+\alpha^{2}\|\mathbf q + n \mathbf p \|^{2}) \rho_{n}'}
\end{equation}
(notice that $e_{n} \to \lambda +\nu/\alpha^{2}$ as $n \to \pm \infty$) we rewrite \eqref{evns-sg} as 
\begin{equation}
z_{n-1}-z_{n+1}=e_{n} z_{n}.
\end{equation}
Now, as before, let $u_{n}=z_{n-1}/z_{n}$ to obtain 
\begin{equation}\label{unformula1sg}
u_{n}=e_{n}+\frac{1}{u_{n+1}}, \quad n \in \mathbb Z.
\end{equation}
Iterating this equation above forwards for $n \geq 0$, one sees that for each $n \geq 0$, $u_{n}$ must satisfy the following continued fraction
\begin{equation}\label{un1sg}
u_{n}=u_{n}^{(1)}(\lambda,\nu,\alpha)= e_{n}+[e_{n+1};e_{n+2};\ldots], \, \quad n=0,1,2,\ldots.
\end{equation}
Let $e_{\infty}:=\lim_{n \to \pm \infty} e_{n}=\lambda+\nu/\alpha^{2}$ and denote $u_{\pm \infty}=e_{\infty}/2 \pm \sqrt{(e_{\infty}/2)^{2}+1}$ and note that $u_{n}^{(1)} \to u_{+\infty}$ as $n \to \infty$ and moreover $u_{+\infty}>1$. (The proof of this fact is similar to the Euler-$\alpha$ case considered in \cite{DLMVW20}).
Similarly, one has 
\begin{equation}\label{unformula2-sg}
u_{n+1}=\frac{-1}{e_{n}-u_{n}}, \quad n \in \mathbb Z. 
\end{equation}
Iterating this for $n \leq 0$, one has, 
\begin{equation}\label{un2-sg}
u_{n}=u_{n}^{(2)}(\lambda,\nu,\alpha)= -[e_{n-1};e_{n-2};\ldots], \quad n=0,-1,-2,\ldots.
\end{equation}
Notice that $u_{n} \to u_{-\infty}$ as $n \to -\infty$ and $-1< u_{-\infty}<0$. Since the expressions for $u_{0}$, given respectively, by equations \eqref{un1sg} and \eqref{un2-sg} must match, we must have $u_{0}^{(1)}(\lambda,\nu)=u_{0}^{(2)}(\lambda,\nu)$, i.e.,
\begin{equation}\label{un12-sg}
u_{0}^{(1)}(\lambda,\nu,\alpha)= e_{0}+[e_{1};e_{2};\ldots]=u_{0}^{(2)}(\lambda,\nu,\alpha)=-[e_{-1};e_{-2};\ldots].
\end{equation}
Denote 
\begin{equation}\label{f-sg}
f^{s}(\lambda, \nu,\alpha)= [e_{1};e_{2};\ldots]
\end{equation}
and
\begin{equation}\label{g-sg}
g^{s}(\lambda, \nu,\alpha)=[e_{-1};e_{-2};\ldots].
\end{equation}
Notice that
\begin{equation}\label{fgu012-sg}
f^{s}(\lambda,\nu,\alpha)=u_{0}^{(1)}(\lambda,\nu,\alpha)-e_{0}(\lambda,\nu), \quad g^{s}(\lambda,\nu,\alpha)=-u_{0}^{(2)}(\lambda,\nu,\alpha).
\end{equation}
Equation \eqref{un12-sg} is equivalent to 
\begin{equation}\label{fg-sg}
e_{0}(\lambda, \nu,\alpha)+f^{s}(\lambda,\nu,\alpha)+g^{s}(\lambda,\nu,\alpha)=0.
\end{equation}
Thus, the eigenvalue problem \eqref{evns-sg} has an unstable eigenvalue $\lambda > 0$ if and only if equation \eqref{fg-sg} has a root $\lambda > 0$. If $\mathbf q$ is of type $I_{0}$, $\rho_{0}'<0$ and $\rho_{n}'>0$ for every $n \neq 0$. $-e_{0}$ as a function of $\lambda$ is a straight line of slope $-1/\rho_{0}'$ and $y$-intercept $-\frac{ \nu \|\mathbf q \|^{2}}{(1+\alpha^{2}\|\mathbf q  \|^{2}) \rho_{0}'}$. Moreover, we can show $f^{s}$ and $g^{s}$ considered as functions of $\lambda$ are continuous on $[0,+\infty)$ with limits $\lim_{\lambda \to +\infty}f^{\alpha}(\lambda, \nu,\alpha)=g^{\alpha}(\lambda, \nu,\alpha)=0$.
Thus the two curves will meet provided that $\lim_{\lambda \to 0^{+}}f^{s}(\lambda, \nu,\alpha)+g^{s}(\lambda, \nu,\alpha)$ is greater than the $y$-intercept of the line $-e_{0}$: $-\frac{ \nu \|\mathbf q\|^{2} }{(1+\alpha^{2}\|\mathbf q  \|^{2}) \rho_{0}'}$.

One can show, similar to Lemma \ref{ff0} in Section 2, that $\lim_{\lambda \to 0^{+}}f^{s}(\lambda, \nu,\alpha)=f^{s}(0,\nu,\alpha)$ and $\lim_{\lambda \to 0^{+}}g^{s}(\lambda, \nu,\alpha)=g^{s}(0,\nu,\alpha)$ which then means that we must check that the inequality
\begin{equation}\label{falphaineq}
f^{s}(0,\nu,\alpha)+g^{s}(0,\nu,\alpha) >-\frac{ \nu \|\mathbf q \|^{2}}{(1+\alpha^{2}\|\mathbf q \|^{2}) \rho_{0}'}
\end{equation}
holds. Considered as a function of $\nu$, $-\frac{\nu \|\mathbf q \|^{2}}{(1+\alpha^{2}\|\mathbf q  \|^{2}) \rho_{0}'}$ is a straight line with positive slope that passes through the origin $\nu=0$. The functions $f^{s}(0,\nu,\alpha)$ and $g^{s}(0,\nu,\alpha)$ are continuous in $\nu$ and satisfy limits
$\lim_{\nu \to +\infty}f^{s}(0,\nu,\alpha)=g^{s}(0,\nu,\alpha)=0$. %
If we therefore show that $\lim_{\nu \to 0^{+}} f^{s}(0,\nu,\alpha) > 0$ and $\lim_{\nu \to 0^{+}} g^{s}(0,\nu,\alpha) > 0$ then there will exist a $\nu_{0}>0$ such that for all $\nu \in (0,\nu_{0})$ the inequality in \eqref{falphaineq} is satisfied. 
\begin{lemma}\label{nuzerolemma}
The following limits hold
\begin{equation}
\lim_{\nu \to 0^{+}} f^{s}(0,\nu,\alpha)=1,
\end{equation}
\begin{equation}
\lim_{\nu \to 0^{+}} g^{s}(0,\nu,\alpha)=1.
\end{equation}
\end{lemma}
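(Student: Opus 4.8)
The plan is to identify $f^{s}(0,\nu,\alpha)$ as a small perturbation of the \emph{constant} continued fraction whose partial denominators all equal the common limit $\nu/\alpha^{2}$ of the $e_{n}(0,\nu,\alpha)$, and to show that this perturbation vanishes as $\nu\to 0^{+}$ because the deviations of the denominators from that limit are summable.

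First I would record the explicit form of the denominators at $\lambda=0$. From the definition of $e_{n}$ together with \eqref{rhonsg}--\eqref{gnormsg} one has $e_{n}(0,\nu,\alpha)=\nu b_{n}$ with
\begin{equation*}
b_{n}=\frac{\|\mathbf q+n\mathbf p\|^{2}}{(1+\alpha^{2}\|\mathbf q+n\mathbf p\|^{2})\,\rho_{n}'},\qquad n\neq 0 .
\end{equation*}
Since $\mathbf q$ is of type $I_{0}$ we have $\rho_{n}'>0$ for $n\neq0$, so $b_{n}>0$ for $n\geq1$; and because $\|\mathbf q+n\mathbf p\|^{2}\to\infty$ while $\rho_{n}'\to1$ (after the normalization \eqref{gnormsg}), a direct expansion gives $b_{n}=\alpha^{-2}+O(\|\mathbf q+n\mathbf p\|^{-2})=\alpha^{-2}+O(n^{-2})$, whence the crucial summability
\begin{equation*}
C':=\sum_{n\geq1}\big|b_{n}-\alpha^{-2}\big|<\infty .
\end{equation*}
I then set $v_{n}=1/u_{n}^{(1)}(0,\nu,\alpha)=[e_{n};e_{n+1};\ldots]$, so that $f^{s}(0,\nu,\alpha)=v_{1}$ and $v_{n}=1/(e_{n}+v_{n+1})$. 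The text's computation of $u_{+\infty}$ at $\lambda=0$ shows $v_{n}\to v^{*}$ as $n\to\infty$, where $v^{*}=1/u_{+\infty}=\tfrac12\big(-\nu\alpha^{-2}+\sqrt{\nu^{2}\alpha^{-4}+4}\big)$ is exactly the value of the constant fraction $[\nu\alpha^{-2};\nu\alpha^{-2};\ldots]$; note $0<v^{*}<1$ for $\nu>0$ and $v^{*}\to1$ as $\nu\to0^{+}$.

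Next I would estimate $\delta_{n}:=v_{n}-v^{*}$. Using $e_{n}+v_{n+1}=1/v_{n}$ and the fixed-point identity $\nu\alpha^{-2}+v^{*}=1/v^{*}$, a one-line computation yields
\begin{equation*}
\delta_{n}=-\,v^{*}v_{n}\big[\nu(b_{n}-\alpha^{-2})+\delta_{n+1}\big].
\end{equation*}
Iterating, and using $\delta_{n}\to0$ to kill the tail term, gives
\begin{equation*}
\delta_{1}=\sum_{n\geq1}(-1)^{n}(v^{*})^{n}\Big(\prod_{j=1}^{n}v_{j}\Big)\,\nu\,(b_{n}-\alpha^{-2}).
\end{equation*}
Because $e_{n}\geq0$ we have $v_{n}v_{n+1}\leq1$, which forces $\prod_{j=1}^{n}v_{j}\leq\max(1,v_{1})$ for every $n$; combined with $(v^{*})^{n}\leq1$ this gives the self-referential bound
\begin{equation*}
|\delta_{1}|\leq \nu\,C'\,\max(1,v_{1})\leq \nu\,C'\,(1+|\delta_{1}|),
\end{equation*}
where the last step uses $v_{1}=v^{*}+\delta_{1}\leq 1+|\delta_{1}|$. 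For $\nu<1/(2C')$ this closes to $|\delta_{1}|\leq 2\nu C'\to0$, so $f^{s}(0,\nu,\alpha)=v^{*}+\delta_{1}\to1$. The proof for $g^{s}$ is identical, run on the left tail $v_{n}=[e_{n};e_{n-1};\ldots]$ using $b_{-n}=\alpha^{-2}+O(n^{-2})$.

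The substance of the argument, and the step I expect to be the main obstacle, is the passage from the pointwise facts ``$e_{n}\to0$'' and ``$b_{n}\to\alpha^{-2}$'' to the quantitative conclusion, since the continued fraction's contraction rate is $v^{*}=1-O(\nu)$ and so degenerates in the limit. Boundedness of the $b_{n}$ alone is \emph{not} enough: for a genuinely $2$-periodic sequence $b_{n}\in\{m,M\}$ the same scheme gives $v_{1}\to\sqrt{M/m}\neq1$. What saves the computation here is precisely the summability $C'<\infty$, which enters through the weighted sum above, together with the elementary uniform control $\prod_{j=1}^{n}v_{j}\leq\max(1,v_{1})$ coming from $v_{n}v_{n+1}\leq1$; verifying these two ingredients rigorously (rather than the convergence $v_{n}\to v^{*}$, which is already supplied by the text) is where the work lies.
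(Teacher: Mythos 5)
Your proof is correct, but it follows a genuinely different route from the paper's. The paper proves this lemma by citation: it invokes the general fact that if $(c_n)$ is a sequence of positive numbers converging to $1$, then $[xc_1;xc_2;\ldots]\to 1$ as $x\to 0^+$ (Lemma 3.1, Item (4) of \cite{DLMVW20}), applied to $f^{s}$ and $g^{s}$ after the trivial rescaling $e_n(0,\nu,\alpha)=(\nu\alpha^{-2})\,(\alpha^{2}b_n)$ with $\alpha^{2}b_n\to1$. You instead build a self-contained perturbation argument around the fixed point $v^{*}$ of the constant continued fraction, and the key steps check out: the recursion $\delta_n=-v^{*}v_n[\nu(b_n-\alpha^{-2})+\delta_{n+1}]$ is algebraically exact; $v_nv_{n+1}\le1$ (from $e_n\ge0$) does yield $\prod_{j=1}^{n}v_j\le\max(1,v_1)$; the tail of the iteration vanishes because $\delta_n\to0$ (i.e.\ $u_n^{(1)}\to u_{+\infty}$, which the text supplies); and $b_n=\alpha^{-2}+O(n^{-2})$ is indeed summable, since $\|\mathbf q+n\mathbf p\|^{2}\sim n^{2}\|\mathbf p\|^{2}$ and $\rho_n'=1+O(n^{-4})$ after the normalization \eqref{gnormsg}, so the self-referential bound closes for $\nu<1/(2C')$. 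What your argument buys is an explicit rate, $|f^{s}(0,\nu,\alpha)-1|\le|1-v^{*}|+2\nu C'=O(\nu)$, which a citation cannot give and which could feed into quantitative estimates of the instability threshold $\nu_0$; what the paper's route buys is brevity and a weaker hypothesis (only $b_n\to\alpha^{-2}$ is used, with no summability requirement). One caveat on your closing discussion: the claim that the summability $C'<\infty$ is ``precisely'' what saves the computation is overstated. Your own scheme survives under mere convergence $b_n\to\alpha^{-2}$: split the sum at a large $N$; the head is $O_N(\nu)$, while the tail is bounded by $\max(1,v_1)\,\epsilon_N\,\nu\sum_{n>N}(v^{*})^{n}\le \max(1,v_1)\,\epsilon_N\,\nu/(1-v^{*})$ with $\epsilon_N=\sup_{n>N}|b_n-\alpha^{-2}|$, and $\nu/(1-v^{*})\to 2\alpha^{2}$ as $\nu\to0^{+}$, so the degeneration of the contraction rate $v^{*}=1-O(\nu)$ is exactly compensated by the factor $\nu$ multiplying the perturbation. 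Your two-periodic counterexample is correct and instructive, but what it detects is failure of convergence of $(b_n)$, not failure of summability.
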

The limits above follow from the following general fact from the theory of continued fractions. Let $(c_{n})$ be a sequence of positive numbers that converge to $1$ and consider the continued fraction $f(x)=[xc_{1};xc_{2};\ldots]$. Then $\lim_{x \to 0^{+}}f(x)=1$. For a proof, see Lemma 3.1, Item (4) in \cite{DLMVW20}. The above discussion thus guarantees the existence of a positive root $\lambda >0$ to the equation \eqref{fg-sg}. The remaining parts are similar to the Navier-Stokes case. We can thus establish the main instability Theorem \ref{mainthm-sg-nsa-nsv}.
Extensions to cover the cases when $\mathbf q$ is of type $I_{+}$ and $I_{-}$ can be carried out similar to those described in Subsection 2.1.

\subsection{Navier-Stokes-$\alpha$}
We now consider the Navier-Stokes-$\alpha$ equations.
Taking the curl of the Navier-Stokes-$\alpha$ equation \eqref{nsalphavel} and setting $\omega=\curl(1-\alpha^{2}\Delta)\mathbf u_{f}$ will yield the equation
\begin{equation}\label{nsalphavort}
\partial_{t}\omega+\mathbf u_{f}\cdot \nabla \omega=\nu \Delta \omega+\nu f.
\end{equation}
Denote by $\varphi$, the stream function corresponding to $\mathbf u_{f}$, i.e., $\mathbf u_{f}=(\varphi_{y},-\varphi_{x})$. As before, we have that $\curl \mathbf u_{f}=\curl (\varphi_{y},-\varphi_{x})=-\Delta \varphi$. Thus $\omega=\curl(1-\alpha^{2}\Delta)\mathbf u_{f}=-\Delta(I-\alpha^{2}\Delta)\varphi$.
Fix $\mathbf p \in \mathbb Z^{2}$, where $\mathbf p=(p_{1},p_{2})$ and denote $\mathbf p^{\perp}=(-p_{2},p_{1})$. Also, fix $\Gamma \in \mathbb R$ and  consider a steady state to \eqref{nsalphavort} of the form
\begin{align}\label{steadystatensalpha}
\omega^{0}=\Gamma \cos (\mathbf p \cdot \mathbf x), \quad \mathbf u_{f}^{0}=\frac{\Gamma \sin (\mathbf p \cdot \mathbf x) \mathbf p^{\perp}}{\|\mathbf p\|^{2}(1+\alpha^{2}\|\mathbf p\|^{2})}, \nonumber \\
f=-\Delta \omega^{0}=\|\mathbf p\|^{2}\Gamma \cos (\mathbf p \cdot \mathbf x).
\end{align}
The stream function $\varphi^{0}$ that corresponds to $\omega^{0}$ is given by the formula \[\varphi^{0}=\frac{\Gamma \cos (\mathbf p \cdot \mathbf x)}{\|\mathbf p\|^{2}(1+\alpha^{2}\|\mathbf p\|^{2})}.\]
Using the Fourier series decomposition  $\omega(\mathbf x)=\sum_{\mathbf k\in\mathbb Z^2\setminus\{0\}}\omega_\mathbf k e^{i \mathbf k\cdot\mathbf x}$  
 we rewrite \eqref{nsalphavort} as
\begin{equation}\label{nsafourier}
\frac{d\omega_{\mathbf k}}{dt}=\sum_{\mathbf q\in\mathbb Z^2\setminus\{0\}}\beta_{1}(\mathbf k-\mathbf q,\mathbf q)\omega_{\mathbf k-\mathbf q}\omega_\mathbf q - \nu \|\mathbf k\|^{2}\omega_{\mathbf k} + \nu f_{\mathbf k} ,\,\,\mathbf k\in\mathbb Z^2\setminus\{0\},
\end{equation}
where the coefficients $\beta_{1}(\mathbf p,\mathbf q)$ for $\mathbf p,\mathbf q\in\mathbb Z^2$ are defined as 
\begin{equation}\label{dfnnsa}
\beta_{1}(\mathbf p,\mathbf q)=\frac{1}{2}\bigg(\|\mathbf q\|^{-2}(1+\alpha^{2}\|\mathbf q\|^{2})^{-1}-\|\mathbf p\|^{-2}(1+\alpha^{2}\|\mathbf p\|^{2})^{-1}\bigg)(\mathbf p\wedge\mathbf q)\,
\end{equation}
for  
$\mathbf p\neq0,\mathbf q\neq0$, and $\beta_{1}(\mathbf p,\mathbf q)=0$ otherwise.

Consider the linearization of \eqref{nsalphavort} about the steady state \eqref{steadystatensalpha} given by
\begin{equation}\label{nsalin}
\partial_{t}\omega + \mathbf u_{f}^{0} \cdot \nabla \omega + \mathbf u_{f} \cdot \nabla \omega^{0}= \nu \Delta  \omega.
\end{equation} 
Corresponding to \eqref{nsalin}, consider the linear operator $L_{B}^{\alpha}$ given by
\begin{equation}
L_{B}^{\alpha} \omega = - \mathbf u_{f}^{0} \cdot \nabla \omega - \mathbf u_{f} \cdot \nabla \omega^{0}+ \nu \Delta  \omega.
\end{equation}
Using the Fourier decomposition in \eqref{nsafourier}, consider the following linearized vorticity operator in the space $\ell^2(\mathbb Z^2)$
\begin{align}
L_B^{\alpha}&: (\omega_\mathbf k)_{\mathbf k\in\mathbb Z^2}\mapsto\nonumber\\&
\big(\beta_{1}(\mathbf p,\mathbf k-\mathbf p)\Gamma\omega_{\mathbf k-\mathbf p}-
\beta_{1}(\mathbf p,\mathbf k+\mathbf p)\Gamma \omega_{\mathbf k+\mathbf p}- \nu \|\mathbf k\|^{2}\omega_{\mathbf k}\big)_{\mathbf k\in\mathbb Z^2}.\label{dfnLBa-nsa}
\end{align}
Fixing a $\mathbf q$ of type $I_{0}$ and decomposing spaces and operators as before, consider the operator $L_{B,\mathbf q}^{\alpha}$ acting on the space $\ell_{2}(\mathbb Z)$ given by
\begin{align}
L_{B,\mathbf q}^{\alpha}: (w_n)_{n\in\mathbb Z}\mapsto
\bigg( \beta_{1}&(\mathbf p,\mathbf q+(n-1)\mathbf p)\Gamma w_{n-1}-
\beta_{1}(\mathbf p,\mathbf q+(n+1)\mathbf p)\Gamma w_{n+1} \nonumber \\&- \nu \|\mathbf q + n \mathbf p \|^{2} w_{n} \bigg)_{n\in\mathbb Z}.\label{nsaLBq}
\end{align}
(Notice that the $L_B^{\alpha}$ and $L_{B,\mathbf q}^{\alpha}$ considered here differ in the viscous term as compared to the second grade fluid model in the previous subsection.) 
Let $\rho_{n}'$ be defined by the equation
\begin{equation}\label{rhonnsa}
\rho_{n}'=\Gamma \beta_{1}(\mathbf p,\mathbf q+n\mathbf p)=\frac{(\mathbf q \wedge \mathbf p) \Gamma}{2 (1+\alpha^{2}\|\mathbf p\|^{2})\|\mathbf p\|^{2}}\bigg( 1-\frac{(1+\alpha^{2}\|\mathbf p\|^{2})\|\mathbf p\|^{2}}{(1+\alpha^{2}\|\mathbf q+n \mathbf p\|^{2})\|\mathbf q+n \mathbf p\|^{2}}\bigg).
\end{equation}
Normalize $\Gamma$ so that 
\begin{equation}\label{gnormnsa}
\frac{(\mathbf q \wedge \mathbf p) \Gamma}{2 (1+\alpha^{2}\|\mathbf p\|^{2})\|\mathbf p\|^{2}}=1.
\end{equation}
After this normalization, $\rho_{n}' \to 1$ as $n \to \pm \infty$. Using \eqref{rhonnsa} the operator $L_{B,\mathbf q}^{\alpha}$ can be rewritten as 
\begin{equation}\label{LBq-nsa2}
L_{B,\mathbf q}^{\alpha}: (w_n)_{n\in\mathbb Z}\mapsto (\rho_{n-1}'w_{n-1}-\rho_{n+1}'w_{n+1}- \nu \|\mathbf q + n \mathbf p \|^{2} w_{n})_{n\in\mathbb Z}.
\end{equation}
Consider the eigenvalue equation
\begin{equation}
L_{B,\mathbf q}^{\alpha}(w_{n})=\lambda (w_{n}).
\end{equation}
Using \eqref{LBq-nsa2} this can be rewritten as
\begin{equation}\label{evns-nsa}
\rho_{n-1}'w_{n-1}-\rho_{n+1}'w_{n+1}=\lambda w_{n}+ \nu \|\mathbf q + n \mathbf p\|^{2}w_{n}.
\end{equation}
Letting $z_{n}=\rho_{n}'w_{n}$ and introducing the notation
\begin{equation}
l_{n}=l_{n}(\lambda,\nu,\alpha)=\frac{\lambda}{\rho_{n}'}+\frac{\nu \|\mathbf q + n \mathbf p\|^{2}}{\rho_{n}'}
\end{equation}
we rewrite \eqref{evns-nsa} as 
\begin{equation}
z_{n-1}-z_{n+1}=l_{n} z_{n}.
\end{equation}
Now, as before, let $u_{n}=z_{n-1}/z_{n}$ to obtain 
\begin{equation}\label{unformula1nsa}
u_{n}=l_{n}+\frac{1}{u_{n+1}}, \quad n \in \mathbb Z.
\end{equation}
Iterating this equation above forwards for $n \geq 0$, one sees that for each $n \geq 0$, $u_{n}$ must satisfy the following continued fraction
\begin{equation}\label{un1nsa}
u_{n}=u_{n}^{(1)}(\lambda,\nu,\alpha)= l_{n}+[l_{n+1};l_{n+2};\ldots],\quad n=0,1,2,\ldots.
\end{equation}
Similar to the Navier-Stokes case $\lim_{n \to \pm \infty} l_{n}=+\infty $ and thus $\lim_{n \to +\infty} u_{n}^{(1)}(\lambda,\nu,\alpha)=+\infty$. 
Similarly, one has 
\begin{equation}\label{unformula2-nsa}
u_{n+1}=\frac{-1}{l_{n}-u_{n}}, \quad n \in \mathbb Z. 
\end{equation}
Iterating this for $n \leq 0$, one has, 
\begin{equation}\label{un2-nsa}
u_{n}=u_{n}^{(2)}(\lambda,\nu,\alpha)= -[l_{n-1};l_{n-2};\ldots], \quad n=0,-1,-2,\ldots.
\end{equation}
Notice that $\lim_{n \to -\infty}u_{n}^{(2)}(\lambda,\nu,\alpha) = 0$. Since the expressions for $u_{0}$, given respectively, by equations \eqref{un1nsa} and \eqref{un2-nsa} must match, we must have $u_{0}^{(1)}(\lambda,\nu)=u_{0}^{(2)}(\lambda,\nu)$, i.e.,
\begin{equation}\label{un12-nsa}
u_{0}^{(1)}(\lambda,\nu,\alpha)= l_{0}+[l_{1};l_{2};\ldots]=u_{0}^{(2)}(\lambda,\nu,\alpha)=-[l_{-1};l_{-2};\ldots].
\end{equation}
Denote 
\begin{equation}\label{f-nsa}
f^{a}(\lambda, \nu,\alpha)= [l_{1}(\lambda, \nu,\alpha);l_{2}(\lambda, \nu,\alpha);\ldots]
\end{equation}
and
\begin{equation}\label{g-nsa}
g^{a}(\lambda, \nu,\alpha)=[l_{-1}(\lambda, \nu,\alpha);l_{-2}(\lambda, \nu,\alpha);\ldots].
\end{equation}
Notice that
\begin{equation}\label{fgu012-nsa}
f^{a}(\lambda,\nu,\alpha)=u_{0}^{(1)}(\lambda,\nu,\alpha)-l_{0}(\lambda,\nu), \quad g^{a}(\lambda,\nu,\alpha)=-u_{0}^{(2)}(\lambda,\nu,\alpha).
\end{equation}
Equation \eqref{un12-nsa} is equivalent to 
\begin{equation}\label{fg-nsa}
l_{0}(\lambda, \nu,\alpha)+f^{a}(\lambda,\nu,\alpha)+g^{a}(\lambda,\nu,\alpha)=0.
\end{equation}
Thus, the eigenvalue problem \eqref{evns-nsa} has an unstable eigenvalue $\lambda > 0$ if and only if equation \eqref{fg-nsa} has a root $\lambda > 0$. If $\mathbf q$ is of type $I_{0}$, $\rho_{0}'<0$ and $\rho_{n}'>0$ for every $n \neq 0$. $-l_{0}$ as a function of $\lambda$ is a straight line of slope $-1/\rho_{0}'$ and $y$-intercept $-\frac{\nu \|\mathbf q \|^{2}}{ \rho_{0}'}$. Moreover, we can show $f^{a}$ and $g^{a}$ considered as functions of $\lambda$ are continuous on $[0,+\infty)$ with limits $\lim_{\lambda \to +\infty}f^{a}(\lambda, \nu,\alpha)=g^{a}(\lambda, \nu,\alpha)=0$. 
Thus the two curves will meet provided that $\lim_{\lambda \to 0^{+}}f^{a}(\lambda, \nu,\alpha)+g^{a}(\lambda, \nu,\alpha)$ is greater than the $y$-intercept of the line $-l_{0}$: $-\frac{\nu \|\mathbf q \|^{2}}{\rho_{0}'}$.

One can show, similar to Lemma \ref{ff0} in Section 2, that $\lim_{\lambda \to 0^{+}}f^{a}(\lambda, \nu,\alpha)=f^{a}(0,\nu,\alpha)$ and $\lim_{\lambda \to 0^{+}}g^{a}(\lambda, \nu,\alpha)=g^{a}(0,\nu,\alpha)$ which then means that we must check that the inequality
\begin{equation}\label{falphaineq-nsa}
f^{a}(0,\nu,\alpha)+g^{a}(0,\nu,\alpha) >-\frac{\nu \|\mathbf q \|^{2}}{\rho_{0}'}
\end{equation}
holds. 

The strategy for proving inequality \eqref{falphaineq-nsa} is similar to the Navier-Stokes case. Similarly to Lemma \ref{even-trunc-properties} and Lemma \ref{nu0} we can show that there exists a positive integer $k$ such that the slopes of the even truncations $f^{a}(0,\nu,\alpha)^{2k}$ and $g^{a}(0,\nu,\alpha)^{2k}$ at $\nu=0$ is greater than the slope $-\frac{ \|\mathbf q \|^{2}}{\rho_{0}'}$ of the straight line $-\frac{\nu \|\mathbf q \|^{2}}{\rho_{0}'}$. This will then establish that there exists a $\nu_{0}>0$ such that for all $\nu \in (0,\nu_{0})$ the inequality \eqref{falphaineq-nsa} is satisfied. The other facts can be proved similarly to the Navier-Stokes case. We can thus establish Theorem \ref{mainthm-sg-nsa-nsv}.

\subsection{Navier-Stokes-Voigt model} 
We finally consider the Navier-Stokes-Voigt model. Taking $\curl$ of equation \eqref{nsvoigtvel} and setting 
\begin{equation}\label{vortformulansvoigt}
\omega=\curl(I-\alpha^{2}\Delta)\mathbf u_{f},
\end{equation}
we obtain the equation
\begin{equation}\label{nsvoigtvort}
\partial_{t}\omega+\mathbf u_{f}\cdot \nabla (I-\alpha^{2}\Delta)^{-1}\omega=\nu \Delta (I-\alpha^{2}\Delta)^{-1}\omega+\nu f.
\end{equation}
Denote by $\varphi$, the stream function corresponding to $\mathbf u$, i.e., $\mathbf u_{f}=(\varphi_{y},-\varphi_{x})$. Note first that $\curl \mathbf u_{f}=\curl (\varphi_{y},-\varphi_{x})=-\Delta \varphi$. We thus have that $\omega=\curl (I-\alpha^{2}\Delta)\mathbf u=-\Delta (I-\alpha^{2}\Delta)\varphi$.
Fix $\mathbf p \in \mathbb Z^{2}$, where $\mathbf p=(p_{1},p_{2})$ and denote $\mathbf p^{\perp}=(-p_{2},p_{1})$. Also, fix $\Gamma \in \mathbb R$ and  consider a steady state to \eqref{nsvoigtvort} of the form
\begin{align}\label{steadystatensvoigt}
\omega^{0}=\Gamma \cos (\mathbf p \cdot \mathbf x), \quad \mathbf u_{f}^{0}=\frac{\Gamma \sin (\mathbf p \cdot \mathbf x) \mathbf p^{\perp}}{(1+\alpha^{2}\|\mathbf p\|^{2})\|\mathbf p\|^{2}}, \nonumber \\
f=-(I-\alpha^{2}\Delta)^{-1}\Delta \omega^{0}=\frac{\|\mathbf p\|^{2}\Gamma \cos (\mathbf p \cdot \mathbf x)}{1+\alpha^{2}\|\mathbf p\|^{2}}.
\end{align}
The stream function $\varphi^{0}$ that corresponds to $\omega^{0}$ is given by the formula \[\varphi^{0}=\frac{\Gamma \cos (\mathbf p \cdot \mathbf x)}{\|\mathbf p\|^{2}(1+\alpha^{2}\|\mathbf p\|^{2})}.\]
Using the Fourier series decomposition  $\omega(\mathbf x)=\sum_{\mathbf k\in\mathbb Z^2\setminus\{0\}}\omega_\mathbf k e^{i \mathbf k\cdot\mathbf x}$  
we rewrite \eqref{nsvoigtvort} as
\begin{equation}\label{nsvfourier}
\frac{d \omega_{\mathbf k}}{dt}=\sum_{\mathbf q\in\mathbb Z^2\setminus\{0\}}\beta_{2}(\mathbf k-\mathbf q,\mathbf q)\omega_{\mathbf k-\mathbf q}\omega_\mathbf q - \nu \frac{\|\mathbf k\|^{2}}{1+\alpha^{2}\|\mathbf k\|^{2}} \omega_{\mathbf k} + \nu f_{\mathbf k} ,\,\,\mathbf k\in\mathbb Z^2\setminus\{0\},
\end{equation}
where the coefficients $\beta(\mathbf p,\mathbf q)$ for $\mathbf p,\mathbf q\in\mathbb Z^2$ are defined as 
\begin{equation}\label{dfnnsv}
\beta_{2}(\mathbf p,\mathbf q)=\frac{1}{2}\bigg(\|\mathbf q\|^{-2}-\|\mathbf p\|^{-2}\bigg)\frac{(\mathbf p\wedge\mathbf q)}{(1+\alpha^{2}\|\mathbf p\|^{2})(1+\alpha^{2}\|\mathbf q\|^{2})}\,
\end{equation}
for %
$\mathbf p\neq0,\mathbf q\neq0$, and $\beta_{1}(\mathbf p,\mathbf q)=0$ otherwise.

Consider the linearization of \eqref{nsvoigtvort} about the steady state \eqref{steadystatensvoigt} given by
\begin{equation}\label{nsvlin}
\partial_{t} \omega + \mathbf u_{f}^{0} \cdot \nabla \omega + \mathbf u_{f} \cdot \nabla \omega^{0}= \nu \Delta (I-\alpha^{2}\Delta)^{-1}\omega.
\end{equation} 
Corresponding to \eqref{nsvlin}, consider the linear operator $L_{B}^{\alpha}$ given by
\begin{equation}
L_{B}^{\alpha} \omega = - \mathbf u_{f}^{0} \cdot \nabla \omega - \mathbf u_{f} \cdot \nabla \omega^{0}+ \nu \Delta (I-\alpha^{2}\Delta)^{-1} \omega.
\end{equation}
Using the Fourier decomposition in \eqref{nsvfourier}, consider the following linearized vorticity operator in the space $\ell^2(\mathbb Z^2)$
\begin{align}
L_B^{\alpha}&: (\omega_\mathbf k)_{\mathbf k\in\mathbb Z^2}\mapsto\nonumber\\&
(\beta_{2}(\mathbf p,\mathbf k-\mathbf p)\Gamma\omega_{\mathbf k-\mathbf p}-
\beta_{2}(\mathbf p,\mathbf k+\mathbf p)\Gamma \omega_{\mathbf k+\mathbf p}- \nu \frac{\|\mathbf k\|^{2}}{1+\alpha^{2}\|\mathbf k\|^{2}}\omega_{\mathbf k})_{\mathbf k\in\mathbb Z^2}.\label{dfnLBa-nsv}
\end{align}
Fixing a $\mathbf q$ of type $I_{0}$ and decomposing spaces and operators as before, consider the operator $L_{B,\mathbf q}^{\alpha}$ acting on the space $\ell_{2}(\mathbb Z)$ given by
\begin{align}
L_{B,\mathbf q}^{\alpha}: &(w_n)_{n\in\mathbb Z}\mapsto
 \bigg(\beta_{2}(\mathbf p,\mathbf q+(n-1)\mathbf p)\Gamma w_{n-1} \nonumber \\&-
\beta_{2}(\mathbf p,\mathbf q+(n+1)\mathbf p)\Gamma w_{n+1}  - \nu \|\mathbf q + n \mathbf p \|^{2}(1+\alpha^{2}\|\mathbf q + n \mathbf p \|^{2})^{-1} w_{n}\bigg)_{n\in\mathbb Z}.\label{nsvLBq}
\end{align}
Let $\rho_{n}$ be defined by the equation
\begin{equation}\label{rhonnsv}
\rho_{n}=\Gamma \beta_{2}(\mathbf p,\mathbf q+n\mathbf p)=\frac{(\mathbf q \wedge \mathbf p) \Gamma}{2 \|\mathbf p\|^{2}(1+\alpha^{2}\|\mathbf p\|^{2})(1+\alpha^{2}\|\mathbf q+n\mathbf p\|^{2})}\bigg( 1-\frac{\|\mathbf p\|^{2}}{\|\mathbf q+n \mathbf p\|^{2}}\bigg).
\end{equation}
Normalize $\Gamma$ so that 
\begin{equation}\label{gnormnsv}
\frac{(\mathbf q \wedge \mathbf p) \Gamma}{2 \|\mathbf p\|^{2}(1+\alpha^{2}\|\mathbf p\|^{2})(1+\alpha^{2}\|\mathbf q+n\mathbf p\|^{2})}=1.
\end{equation}
After this normalization, $\rho_{n} \to 1$ as $n \to \pm \infty$. Using \eqref{rhonnsv} the operator $L_{B,\mathbf q}^{\alpha}$ can be rewritten as 
\begin{equation}\label{LBq-nsv2}
L_{B,\mathbf q}^{\alpha}: (w_n)_{n\in\mathbb Z}\mapsto (\rho_{n-1}w_{n-1}-\rho_{n+1}w_{n+1}- \nu \|\mathbf q + n \mathbf p \|^{2}(1+\alpha^{2}\|\mathbf q + n \mathbf p \|^{2})^{-1} w_{n})_{n\in\mathbb Z}.
\end{equation}
Consider the eigenvalue equation
\begin{equation}
L_{B,\mathbf q}^{\alpha}(w_{n})=\lambda (w_{n}).
\end{equation}
Using \eqref{LBq-nsv2} this can be rewritten as
\begin{equation}\label{evns-nsv}
\rho_{n-1}w_{n-1}-\rho_{n+1}w_{n+1}=\lambda w_{n}+ \nu \|\mathbf q + n \mathbf p\|^{2}(1+\alpha^{2}\|\mathbf q + n \mathbf p \|^{2})^{-1}w_{n}.
\end{equation}
Letting $z_{n}=\rho_{n}w_{n}$ and introducing the notation
\begin{equation}
i_{n}(\lambda,\nu,\alpha)=\frac{\lambda}{\rho_{n}}+\frac{\nu \|\mathbf q + n \mathbf p\|^{2}}{(1+\alpha^{2}\|\mathbf q + n \mathbf p \|^{2})\rho_{n}}
\end{equation}
we rewrite \eqref{evns-nsv} as 
\begin{equation}
z_{n-1}-z_{n+1}=i_{n} z_{n}.
\end{equation}
Now, let $u_{n}=z_{n-1}/z_{n}$ to obtain 
\begin{equation}\label{unformula1nsv}
u_{n}=i_{n}+\frac{1}{u_{n+1}}, \quad n \in \mathbb Z.
\end{equation}
Iterating this equation above forwards for $n \geq 0$, one sees that for each $n \geq 0$, $u_{n}$ must satisfy the following continued fraction
\begin{equation}\label{un1nsv}
u_{n}=u_{n}^{(1)}(\lambda,\nu,\alpha)= i_{n}+[i_{n+1};i_{n+2};\ldots]\quad n=0,1,2,\ldots.
\end{equation}
Similarly, one has 
\begin{equation}\label{unformula2-nsv}
u_{n+1}=\frac{-1}{i_{n}-u_{n}}, \quad n \in \mathbb Z. 
\end{equation}
Iterating this for $n \leq 0$, one has,
\begin{equation}\label{un2-nsv}
u_{n}=u_{n}^{(2)}(\lambda,\nu,\alpha)= -[i_{n-1};i_{n-2};\ldots]\quad n=0,-1,-2,\ldots.
\end{equation} 
Since the expressions for $u_{0}$, given respectively, by equations \eqref{un1nsv} and \eqref{un2-nsv} must match, we must have $u_{0}^{(1)}(\lambda,\nu)=u_{0}^{(2)}(\lambda,\nu)$, i.e.,
\begin{equation}\label{un12-nsv}
u_{0}^{(1)}(\lambda,\nu,\alpha)= i_{0}+[i_{1};i_{2};\ldots]=u_{0}^{(2)}(\lambda,\nu,\alpha)=-[i_{-1};i_{-2};\ldots].
\end{equation}
Denote 
\begin{equation}\label{f-nsv}
f^{v}(\lambda, \nu,\alpha)= [i_{1}(\lambda, \nu,\alpha);i_{2}(\lambda, \nu,\alpha);\ldots]
\end{equation}
and
\begin{equation}\label{g-nsv}
g^{v}(\lambda, \nu,\alpha)=[i_{-1}(\lambda, \nu,\alpha);i_{-2}(\lambda, \nu,\alpha);\ldots].
\end{equation}
Notice that
\begin{equation}\label{fgu012-nsv}
f^{v}(\lambda,\nu,\alpha)=u_{0}^{(1)}(\lambda,\nu,\alpha)-i_{0}(\lambda,\nu), \quad g^{v}(\lambda,\nu,\alpha)=-u_{0}^{(2)}(\lambda,\nu,\alpha).
\end{equation}
Equation \eqref{un12-nsv} is equivalent to 
\begin{equation}\label{fg-nsv}
i_{0}(\lambda, \nu,\alpha)+f^{v}(\lambda,\nu,\alpha)+g^{v}(\lambda,\nu,\alpha)=0.
\end{equation}
Thus, the eigenvalue problem \eqref{evns-nsv} has an unstable eigenvalue $\lambda > 0$ if and only if equation \eqref{fg-nsv} has a root $\lambda > 0$. If $\mathbf q$ is of type $I_{0}$, $\rho_{0}<0$ and $\rho_{n}>0$ for every $n \neq 0$. $-i_{0}$ as a function of $\lambda$ is a straight line of slope

 $-1/\rho_{0}$ and $y$-intercept $-\frac{ \nu \|\mathbf q \|^{2}}{(1+\alpha^{2}\|\mathbf q  \|^{2}) \rho_{0}}$. Moreover, we can show $f^{v}$ and $g^{v}$ considered as functions of $\lambda$ are continuous on $[0,+\infty)$ with limits $\lim_{\lambda \to +\infty}f^{v}(\lambda, \nu,\alpha)=g^{v}(\lambda, \nu,\alpha)=0$.
Thus the two curves will meet provided that $\lim_{\lambda \to 0^{+}}f^{v}(\lambda, \nu,\alpha)+g^{v}(\lambda, \nu,\alpha)$ is greater than the $y$-intercept of the line $-i_{0}$: $-\frac{ \nu \|\mathbf q\|^{2} }{(1+\alpha^{2}\|\mathbf q  \|^{2}) \rho_{0}}$.

One can show, similar to Lemma \ref{ff0} in Section 2, that $\lim_{\lambda \to 0^{+}}f^{v}(\lambda, \nu,\alpha)=f^{v}(0,\nu,\alpha)$ and $\lim_{\lambda \to 0^{+}}g^{v}(\lambda, \nu,\alpha)=g^{v}(0,\nu,\alpha)$ which then means that we must check that the inequality
\begin{equation}\label{falphaineq-voigt}
f^{v}(0,\nu,\alpha)+g^{v}(0,\nu,\alpha) >-\frac{ \nu \|\mathbf q \|^{2}}{(1+\alpha^{2}\|\mathbf q \|^{2}) \rho_{0}}
\end{equation}
holds. Considered as a function of $\nu$, $-\frac{\nu \|\mathbf q \|^{2}}{(1+\alpha^{2}\|\mathbf q  \|^{2}) \rho_{0}}$ is a straight line with positive slope that passes through the origin $\nu=0$. The functions $f^{v}(0,\nu,\alpha)$ and $g^{v}(0,\nu,\alpha)$ are continuous in $\nu$ and satisfy limits
$\lim_{\nu \to +\infty}f^{v}(0,\nu,\alpha)=g^{v}(0,\nu,\alpha)=0$. %
If we therefore show that $\lim_{\nu \to 0^{+}} f^{v}(0,\nu,\alpha) > 0$ and $\lim_{\nu \to 0^{+}} g^{v}(0,\nu,\alpha) > 0$ then there will exist a $\nu_{0}>0$ such that for all $\nu \in (0,\nu_{0})$ the inequality in \eqref{falphaineq} is satisfied. Similar to the second grade fluid model, we can prove the following Lemma.
\begin{lemma}\label{nuzerolemma}
The following limits hold
\begin{equation}
\lim_{\nu \to 0^{+}} f^{v}(0,\nu,\alpha)=1,
\end{equation}
\begin{equation}
\lim_{\nu \to 0^{+}} g^{v}(0,\nu,\alpha)=1.
\end{equation}
\end{lemma}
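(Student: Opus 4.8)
The plan is to reduce both limits to the general continued-fraction fact recorded in the paragraph preceding the lemma, namely Lemma 3.1, Item (4) of \cite{DLMVW20}: if $(c_n)$ is a sequence of positive numbers with $c_n \to 1$, then $\lim_{x \to 0^+}[x c_1; x c_2; \ldots] = 1$. First I would specialize the partial quotients $i_n(\lambda,\nu,\alpha)$ to $\lambda = 0$. Directly from the definition of $i_n$ this gives
\[
i_n(0,\nu,\alpha) = \frac{\nu\,\|\bq + n\bp\|^2}{(1+\alpha^2\|\bq+n\bp\|^2)\,\rho_n} = \nu\, c_n, \qquad c_n := \frac{\|\bq + n\bp\|^2}{(1+\alpha^2\|\bq+n\bp\|^2)\,\rho_n},
\]
so that $f^v(0,\nu,\alpha) = [\nu c_1; \nu c_2; \ldots]$ and $g^v(0,\nu,\alpha) = [\nu c_{-1}; \nu c_{-2}; \ldots]$, both regarded as functions of $\nu$ with $\alpha$ fixed.

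Next I would record the two properties of $(c_n)$ needed to invoke the cited fact. Since $\bq$ is of type $I_0$ we have $\rho_n > 0$ for all $n \neq 0$ (as noted above, cf. Remark \ref{rho1}), so $c_n > 0$ for every $n \geq 1$ and every $n \leq -1$. For the limiting behaviour, as $n \to \pm\infty$ one has $\|\bq+n\bp\|^2 \to \infty$, whence $\tfrac{\|\bq+n\bp\|^2}{1+\alpha^2\|\bq+n\bp\|^2} \to \alpha^{-2}$, while after the normalization \eqref{gnormnsv} there holds $\rho_n \to 1$. Therefore $c_n \to \alpha^{-2} =: L > 0$ as $n \to \pm\infty$. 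In particular, for each fixed $\nu > 0$ the partial quotients satisfy $\nu c_n \to \nu L > 0$, so $\sum_n \nu c_n$ diverges and the continued fractions converge by Van Vleck's theorem, exactly as in the Navier-Stokes case.

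The remaining step is a rescaling that casts the coefficient sequence into the normalized form $\to 1$ demanded by the cited fact. Setting $d_n := \alpha^2 c_n$, the sequence $(d_n)_{n \geq 1}$ consists of positive numbers with $d_n \to 1$, and since $(\nu/\alpha^2)\, d_n = \nu c_n$,
\[
f^v(0,\nu,\alpha) = [\nu c_1; \nu c_2; \ldots] = \big[(\nu/\alpha^2)\,d_1; (\nu/\alpha^2)\,d_2; \ldots\big].
\]
As $\nu \to 0^+$ the scaled variable $x = \nu/\alpha^2 \to 0^+$, so Lemma 3.1(4) of \cite{DLMVW20} applied to $(d_n)$ gives $\lim_{\nu \to 0^+} f^v(0,\nu,\alpha) = 1$. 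Applying the identical argument to $(d_{-n})_{n \geq 1}$, which likewise tends to $1$, yields $\lim_{\nu \to 0^+} g^v(0,\nu,\alpha) = 1$, completing the proof.

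I expect the only point requiring genuine care to be the identification of the limit $c_n \to \alpha^{-2}$ together with the attendant rescaling: the cited fact is phrased for coefficients converging to $1$, whereas here they converge to $\alpha^{-2}$, so the normalization $d_n = \alpha^2 c_n$ (equivalently, absorbing the constant into the small parameter via $x = \nu/\alpha^2$) is precisely what makes the hypothesis of that lemma literally applicable. Everything else is routine, and the argument is verbatim the one used for the second grade model, the only changes being the explicit forms of $\rho_n$ in \eqref{rhonnsv} and of the partial quotients $i_n$.
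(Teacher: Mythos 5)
Your proof is correct within the paper's framework and follows essentially the same route as the paper, whose entire proof of this lemma is the remark that it is ``similar to the second grade fluid model,'' the second grade proof being precisely the reduction to Lemma 3.1, Item (4) of \cite{DLMVW20} that you carry out. You are in fact more careful than the paper on one point: the coefficients $c_n$ tend to $\alpha^{-2}$, not to $1$, and your rescaling $d_n=\alpha^2 c_n$, $x=\nu/\alpha^2$ is exactly what is needed to make the cited fact literally applicable, a step the paper passes over silently.

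One caveat, which your argument inherits from the paper rather than introduces: the step ``$\rho_n\to 1$ after the normalization \eqref{gnormnsv}'' is problematic for the Voigt model, because the left-hand side of \eqref{gnormnsv} depends on $n$, so it cannot be made equal to $1$ for all $n$ by a single choice of the constant $\Gamma$. The factor $(1+\alpha^{2}\|\bq+n\bp\|^{2})^{-1}$ in \eqref{rhonnsv} is intrinsic to $\rho_n$ (unlike the second grade case \eqref{gnormsg}, where the normalizing constant is genuinely $n$-independent and $\rho_n'\to1$ does hold). Under the only legitimate normalization, $\frac{(\bq\wedge\bp)\Gamma}{2\|\bp\|^{2}(1+\alpha^{2}\|\bp\|^{2})}=1$, one gets $\rho_n=O(n^{-2})$, and then a short computation gives $i_n(0,\nu,\alpha)=\nu b_n$ with $b_n$ exactly as in \eqref{bn}; hence your $c_n$ diverges and Lemma 3.1(4) of \cite{DLMVW20} no longer applies. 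In that reading, the Voigt model should be treated like the Navier-Stokes case (the even-truncation/slope argument of Lemmas \ref{even-trunc-properties} and \ref{nu0}) rather than like the second grade model, and the limits in the present lemma would require a different justification. Since you reproduced the paper's intended argument faithfully and even repaired its small rescaling gap, this is a defect of the paper's setup rather than of your proof relative to it, but it is worth flagging.
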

The above discussion thus guarantees the existence of a positive root $\lambda >0$ to the equation \eqref{fg-sg}. The remaining parts are similar to the Navier-Stokes case. We can thus establish Theorem \ref{mainthm-sg-nsa-nsv}.

\begin{theorem}\label{mainthm-sg-nsa-nsv}
Suppose \eqref{steadystatesecondgrade},\eqref{steadystatensalpha}, \eqref{steadystatensvoigt} are, respectively, steady state solutions to the 2D second grade equations \eqref{secondgradevort}, the Navier-Stokes-$\alpha$ equations \eqref{nsalphavort} and the Navier-Stokes-Voigt equations \eqref{nsvoigtvort} such that there exists at least one point 
$\bq \in \cQ(\bp)$ of type $I_{0}$, where $\bq$ is not parallel to $\bp$. Also, we assume that $\Gamma \in \mathbb R$ and satisfies the respective normalization conditions given by equations \eqref{gnormsg}, \eqref{gnormnsa} and \eqref{gnormnsv}.
Then there exist $\nu_{0}^{sg}, \nu_{0}^{a}, \nu_{0}^{v}$ such that for all $\nu$ in the respective intervals $(0,\nu_{0}^{sg})$, $(0,\nu_{0}^{a})$, $(0,\nu_{0}^{v})$ the steady state $\omega^{0}$, $\mathbf u^{0}$ and $f$ defined in equations  \eqref{steadystatesecondgrade}, \eqref{steadystatensalpha} and \eqref{steadystatensvoigt} are linearly unstable. In particular, the respective linear operators $L_{B,\mathbf q}^{\alpha}$ in the space $\ell^{2}_{s}(\mathbb Z)$ have a positive eigenvalue and therefore the operators $L_{B}^{\alpha}$ in $\ell_{s}^{2}(\bbZ^{2})$ have a positive eigenvalue. Moreover, the following assertions hold for all $\nu$ in the respective intervals $(0,\nu_{0}^{sg})$, $(0,\nu_{0}^{a})$, $(0,\nu_{0}^{v})$:
if $\mathbf q$ is of type $I_0$ then $\lambda > 0$ is an eigenvalue of $L_{B,\mathbf q}^{\alpha}$, respectively for the second grade, Navier-Stokes-$\alpha$ and Navier-Stokes-Voigt models, with eigenvector $(w_{n})$ satisfying Property \ref{eigvectorions}
if and only if $\lambda > 0$ is a solution to the following equations
\begin{equation}\label{eqiov-sg}
e_{0}(\lambda,\nu,\alpha)+f^{s}(\lambda,\nu,\alpha)+g^{s}(\lambda,\nu,\alpha)=0,
\end{equation}
\begin{equation}\label{eqiov-nsa}
l_{0}(\lambda,\nu,\alpha)+f^{a}(\lambda,\nu,\alpha)+g^{a}(\lambda,\nu,\alpha)=0,
\end{equation}
\begin{equation}\label{eqiov-nsv}
i_{0}(\lambda,\nu,\alpha)+f^{v}(\lambda,\nu,\alpha)+g^{v}(\lambda,\nu,\alpha)=0.
\end{equation}
\end{theorem}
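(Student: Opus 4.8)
The plan is to run, for each of the three models separately, the same argument that proved Theorem~\ref{mainthm}, exploiting the fact that the setups in Subsections~4.1--4.3 have already reduced each eigenvalue problem \eqref{evns-sg}, \eqref{evns-nsa}, \eqref{evns-nsv} to the continued-fraction equations \eqref{eqiov-sg}, \eqref{eqiov-nsa}, \eqref{eqiov-nsv}. In each case the substitution $z_n=\rho_n' w_n$ (written $\rho_n$ in the Voigt model) together with $u_n=z_{n-1}/z_n$ turns the three-term recurrence into the scalar recursion $u_n=c_n+1/u_{n+1}$, where $c_n$ is $e_n$, $l_n$, or $i_n$; iterating forward and backward produces the one-sided continued fractions $u_n^{(1)}$ and $u_n^{(2)}$, and the matching condition at $n=0$ is precisely the stated equation. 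Thus it suffices to prove, for $\nu$ in a suitable interval $(0,\nu_0)$, that each equation has a root $\lambda>0$, and then to reverse the construction to obtain the stated iff characterization.

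Existence of a positive root follows the intersection-of-curves strategy of Remark~\ref{root}. For all three models $-c_0(\lambda,\nu,\alpha)$ is, as a function of $\lambda\ge 0$, a line of positive slope (since $\rho_0'<0$ for type $I_0$), while $f^\bullet+g^\bullet$ is continuous, nonnegative, tends to $0$ as $\lambda\to+\infty$, and, by the analogue of Lemma~\ref{ff0}, converges as $\lambda\to 0^+$ to $f^\bullet(0,\nu,\alpha)+g^\bullet(0,\nu,\alpha)$; hence the two curves meet at some $\lambda>0$ once the value at $\lambda=0^+$ exceeds the $y$-intercept of $-c_0$. Here the three models split into two regimes. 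For the Navier--Stokes-$\alpha$ model $l_n\to+\infty$, so I would copy Lemma~\ref{even-trunc-properties} and Lemma~\ref{nu0}: the even truncations have the rational form \eqref{f2k}--\eqref{g2k} with positive coefficients and slope at $\nu=0$ equal to a partial sum of $b$-type coefficients, which diverges, so for large $k$ the slope of $f^a(0,\nu,\alpha)^{2k}+g^a(0,\nu,\alpha)^{2k}$ at the origin beats the slope of the line and \eqref{falphaineq-nsa} holds on a nontrivial interval. For the second grade and Voigt models, by contrast, $e_n$ and $i_n$ converge to the finite limit $\lambda+\nu/\alpha^2$, and the relevant input is Lemma~\ref{nuzerolemma}: since $f^s(0,\nu,\alpha),g^s(0,\nu,\alpha)$ (resp.\ $f^v,g^v$) tend to $1$ as $\nu\to 0^+$ while the $y$-intercept of $-e_0$ (resp.\ $-i_0$) tends to $0$, inequalities \eqref{falphaineq} and \eqref{falphaineq-voigt} hold automatically for small $\nu$, and combined with the vanishing of $f^\bullet+g^\bullet$ as $\nu\to+\infty$ this pins down $\nu_0^{sg},\nu_0^v$.

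Given a root $\lambda>0$, I would reconstruct the eigenvector exactly as in the Navier--Stokes case: set $u_n=u_n^{(1)}$ for $n\ge 0$ and $u_n=u_n^{(2)}$ for $n\le 0$ (the two agree at $n=0$ because $\lambda$ solves the matching equation), define $z_0=1$ and $z_n$ by the telescoping products \eqref{zn>0}--\eqref{zn<0}, and put $w_n=z_n/\rho_n'$. Verifying that these $w_n$ solve the eigenvalue equation and satisfy the sign pattern of Property~\ref{eigvectorions} is the same bookkeeping as in the proof of Theorem~\ref{mainthm}, and the backward direction---that any eigenvalue $\lambda>0$ with an eigenvector of the prescribed sign pattern forces a root---is identical, since $w_n\ne 0$ everywhere lets one form $u_n=z_{n-1}/z_n$, iterate, and read off the matching condition. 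The claim for $L_B^\alpha$ on $\ell_s^2(\bbZ^2)$ will then follow from the direct-sum decomposition $L_B^\alpha=\oplus_{\mathbf q}L_{B,\mathbf q}^\alpha$ set up in Subsections~4.1--4.3, once the decay below is established.

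The one genuinely new point, and the step I expect to be the main obstacle, is the $\ell_s^2$ decay of $(w_n)$ for the second grade and Voigt models. In the Navier--Stokes proof the estimates \eqref{u1ineq}--\eqref{u2ineq} of Lemma~\ref{ulemma} rested on $u_n^{(1)}\to+\infty$ and $u_n^{(2)}\to 0$, which fails here because $e_n,i_n$ have finite limits. Instead, as noted after \eqref{un1sg}, one has $u_n^{(1)}\to u_{+\infty}=e_\infty/2+\sqrt{(e_\infty/2)^2+1}>1$ and $u_n^{(2)}\to u_{-\infty}\in(-1,0)$; since $|u_{+\infty}|>1$ the products $u_1^{(1)}\cdots u_n^{(1)}$ still grow geometrically and $z_n$ decays, while $|u_{-\infty}|<1$ makes the products $u_0^{(2)}\cdots u_{-n+1}^{(2)}$ decay geometrically, so $z_{-n}$ decays as well. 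This is the Euler-$\alpha$ mechanism of \cite{DLMVW20}, and establishing the geometric bounds rigorously and uniformly in the tail is where the argument departs from Lemma~\ref{ulemma}; once these bounds are in hand, the proof of Lemma~\ref{eigvectprop} carries over (using $\rho_n'\to 1$, so $(\rho_n')$ is bounded and $w_n=z_n/\rho_n'$ inherits the decay) and yields $(w_n)\in\ell_s^2(\bbZ)$ for every $s\ge 0$, completing all three cases.
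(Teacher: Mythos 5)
Your proposal is correct and follows essentially the same route as the paper: the paper's Section 4 likewise reduces each model to the matching equations \eqref{fg-sg}, \eqref{fg-nsa}, \eqref{fg-nsv}, then splits into exactly your two regimes --- the even-truncation slope argument (Lemmas \ref{even-trunc-properties} and \ref{nu0}) for Navier-Stokes-$\alpha$, where $l_n\to+\infty$, and the limit-to-one Lemma \ref{nuzerolemma} for the second grade and Voigt models, where $e_n,i_n\to\lambda+\nu/\alpha^2$ --- before reconstructing eigenvectors as in Theorem \ref{mainthm}. You even correctly isolate the one place the paper is terse, namely that the $\ell^2_s$ decay for second grade/Voigt cannot come from Lemma \ref{ulemma} but instead from the geometric bounds via $u_{+\infty}>1$ and $u_{-\infty}\in(-1,0)$, which is precisely the Euler-$\alpha$ mechanism of \cite{DLMVW20} that the paper invokes by reference.
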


\section{Future questions}
We mention the following questions that can be explored in the future. One question is to carry out numerical investigations to determine the effect of various parameters such as $\nu$ and $\alpha$ on the location of the unstable eigenvalue $\lambda$. A second direction of study is to use the Fredholm determinant characterization obtained in Section 3 to directly compute unstable eigenvalues, both theoretically and numerically. A third direction would be to try and understand the effect of the regularization parameter $\alpha$ on the nonlinear instability of the $\alpha$-models, see \cite{LV19} for related work on nonlinear stability results for the 2D Euler-$\alpha$ equations.




\end{document}